\definecolor{dred}{rgb}{0.4,0.0,0.0}
\definecolor{dgrn}{rgb}{0.0,0.4,0.2}
\definecolor{dblu}{rgb}{0.0,0.0,0.4}
\definecolor{dpur}{rgb}{0.7,0.0,0.7}
\numberwithin{equation}{section}
\newtheorem{prop}{Proposition}
\newtheorem{lemma}[prop]{Lemma}
\newtheorem{thm}[prop]{Theorem}
\newtheorem{cor}[prop]{Corollary}
\numberwithin{prop}{section}
\theoremstyle{definition}
\newtheorem{defn}[prop]{Definition}
\newtheorem{rmk}[prop]{Remark}
\newcommand{\del}{\partial}
\newcommand{\dt}{\frac{\partial}{\partial t}}
\newcommand{\brs}[1]{\left| #1 \right|}
\newcommand{\ip}[1]{\left\langle #1 \right\rangle}
\newcommand{\brk}[1]{\left[ #1 \right]}
\newcommand{\prs}[1]{\left( #1 \right)}
\newcommand{\gG}{\Gamma}
\newcommand{\gz}{\zeta}
\newcommand{\gD}{\Delta}
\newcommand{\gd}{\delta}
\newcommand{\gs}{\sigma}
\newcommand{\gl}{\lambda}
\newcommand{\gw}{\omega}
\newcommand{\ga}{\alpha}
\newcommand{\gb}{\beta}
\renewcommand{\ge}{\epsilon}
\newcommand{\N}{\nabla}
\newcommand{\FF}{\mathcal F}
\newcommand{\til}[1]{\widetilde{#1}}
\renewcommand{\bar}[1]{\overline{#1}}
\newcommand{\bRm}{\mathbb{R}^m}
\newcommand{\lbr}{\left[}
\newcommand{\rbr}{\right]}
\newcommand{\hsp}{\hspace{1cm}}
\newcommand{\lap}{\Delta}
\newcommand{\la}{\lambda}
\newcommand{\bRn}{\mathbb{R}^n}
\newcommand{\hook}{\mathbin{\hbox{\vrule height2.4pt width4.5pt depth-2pt
\vrule height5pt width0.4pt depth-2pt}}}
\newcommand{\rN}{\mathring{\nabla}}
\newcommand{\bN}{\overline{\nabla}}
\newcommand{\ten}{\otimes}
\newcommand{\vp}{\varphi}
\newcommand{\lb}{\left[}
\newcommand{\rb}{\right]}
\newcommand{\calf}{\mathcal{F}}
\DeclareMathOperator{\tr}{tr}
\DeclareMathOperator{\Euc}{euc}
\begin{document}

\title[Entropy, stability and harmonic map flow]{Entropy, stability and
harmonic map flow}

\author{Jess Boling}
\email{\href{mailto:jboling@uci.edu}{jboling@uci.edu}}
\author{Casey Kelleher}
\email{\href{mailto:clkelleh@uci.edu}{clkelleh@uci.edu}}
\author{Jeffrey Streets}
\email{\href{mailto:jstreets@uci.edu}{jstreets@uci.edu}}

\address{Rowland Hall\\
         University of California\\
         Irvine, CA 92617}

\thanks{The second author was supported by an NSF Graduate Research Fellowship
DGE-1321846.  The third author was supported by the NSF via DMS-1301864 and a
Sloan Foundation fellowship}

\date{June 17th, 2015}
\begin{abstract}  Inspired by work of Colding-Minicozzi \cite{CM} on mean
curvature flow, Zhang \cite{Zhang} introduced a notion of entropy stability for
harmonic map flow.  We build further upon this work in several directions. 
First we prove the equivalence of entropy stability with a more computationally
tractable $\FF$-stability.  Then, focusing on the case of spherical targets, we
prove a general instability result for high-entropy solitons.  Finally, we
exploit results of Lin-Wang \cite{LW} to observe long time existence and
convergence
results for maps into certain convex domains and how they relate to generic
singularities of harmonic map flow.
\end{abstract}

\maketitle

\section{Introduction}

In \cite{CM}, Colding and Minicozzi introduced a notion of entropy for mean
curvature flow.  They define a notion of entropy stability for solitons mean
curvature
flow and prove its equivalence to a more computationally tractable notion of
$\FF$-stability.  In this context, a solution to mean curvature flow which only
encounters entropy-stable singularities is said to be generic.  This is in
the sense that one cannot perturb a stable singularity shortly before the
singular time to avoid the singularity. Equipped with the equivalence to
$\calf$-stability, in \cite{CM} it is proved that generic singularities to mean
curvature flow are round spheres or cylinders. 

Inspired by that work, we aim to understand singularities of the harmonic map
heat flow by studying the stability of solitons (see Definition
\ref{solitondef}).  Through rescaling arguments (\cite{Grayson, LW}), one can
produce soliton limits from singularities of the harmonic map flow, and thus
understanding them is central to understanding the long time existence behavior
of the flow.  First we study a entropy stability condition for the
harmonic
map heat flow and prove its equivalence to a notion of $\calf$-stability
introduced in \cite{Zhang}.  The relevant definitions for the statement below
are found in \S \ref{s:euclrestr} and \S \ref{s:entropystability}.
\begin{thm}\label{thm:main1} Suppose $f: \bRm \to N$ is a non-cylindrical
soliton with polynomial energy density growth. Then $f$ is $\calf$-unstable if
and only
if there is a compactly supported variation $f_s$ such that $f_0 = f$ and for
all $s \neq 0$ one has
\begin{equation*}
\la(f_s) < \la(f).
\end{equation*}
\end{thm}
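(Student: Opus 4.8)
\emph{Setup and analytic backbone.} After a translation of the domain and a parabolic rescaling we may assume the soliton structure of $f$ is based at $(0,1)$, so that $f$ is a critical point of $\FF_{0,1}$ and $\la(f)=\FF_{0,1}(f)=\sup_{(x_0,t_0)}\FF_{x_0,t_0}(f)$. Two consequences of the hypotheses drive the proof. First, polynomial energy density growth makes $(x_0,t_0)\mapsto\FF_{x_0,t_0}(g)$ a smooth, proper function --- decaying as $t_0\to0$, $t_0\to\infty$, or $|x_0|\to\infty$ --- uniformly for $g$ in a $C^2$-neighborhood of $f$ among maps agreeing with $f$ off a fixed compact set; hence $\la(g)$ is attained, is attained near $(0,1)$ when $g$ is near $f$, and all the Gaussian-weighted error terms produced below by truncation are $O(e^{-cR^2})$. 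Second, the non-cylindrical hypothesis forces $(0,1)$ to be an isolated maximizer of $(x_0,t_0)\mapsto\FF_{x_0,t_0}(f)$ with negative-definite Hessian $Q$, which moreover coincides (up to the standard normalization) with the matrix of $\FF''$-pairings of the translation and dilation variation fields $v_{\mathrm{tr}},v_{\mathrm{di}}$ along $f$.

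\emph{The decisive identity.} Let $f_s$ be a compactly supported variation with $f_0=f$, $v=\partial_sf_s|_0$, and set $\Lambda(s):=\la(f_s)$. Since $f_0=f$ and $(0,1)$ is a nondegenerate maximizer, the implicit function theorem yields a smooth path $(x_0(s),t_0(s))$ of maximizers starting at $(0,1)$, which properness identifies with the global maximizer for small $s$; thus $\Lambda$ is smooth near $0$. The envelope identity gives $\Lambda'(0)=\tfrac{d}{ds}\big|_0\FF_{0,1}(f_s)=0$ since $f$ is a soliton, and substituting the IFT formula for $(x_0'(0),t_0'(0))$ --- in terms of $Q^{-1}$ and the mixed derivatives $\FF''(v,v_{\mathrm{tr}})$, $\FF''(v,v_{\mathrm{di}})$ --- into $\Lambda''(0)$ produces
\begin{equation*}
\Lambda''(0)=\max_{w\in\mathrm{span}\{v_{\mathrm{tr}},v_{\mathrm{di}}\}}\FF''(v+w,v+w),
\end{equation*}
the right side being finite precisely because $Q$ is negative definite. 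In words, the second variation of the entropy along any compactly supported variation equals the $\FF$-stability quadratic form evaluated on the variation field. The direction ``entropy instability $\Rightarrow$ $\FF$-instability'' is now immediate: if $\Lambda(s)<\Lambda(0)$ for all $s\neq0$ then $s=0$ is a strict maximum, so $\Lambda''(0)\le0$ and the $\FF$-stability form is non-positive on $v$; a perturbation of $v$ within the kernel upgrades this to a strictly negative direction, i.e.\ $\FF$-instability.

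\emph{$\FF$-instability $\Rightarrow$ entropy instability.} Choose an $\FF$-unstable direction $\til v$, i.e.\ $\max_w\FF''(\til v+w,\til v+w)<0$; replacing $\til v$ by $\til v$ plus the optimal $w$ we may assume $\til v$ is $\FF''$-orthogonal to $v_{\mathrm{tr}},v_{\mathrm{di}}$ and $\FF''(\til v,\til v)<0$. Cut $\til v$ off by $\chi_R$ (equal to $1$ on $B_R$, supported in $B_{2R}$) to a compactly supported $v_R$; the polynomial growth of $f$ and $\til v$ against the Gaussian weight give $\FF''(v_R,v_R)=\FF''(\til v,\til v)+O(e^{-cR^2})$ and $\FF''(v_R,v_{\mathrm{tr}}),\FF''(v_R,v_{\mathrm{di}})=O(e^{-cR^2})$, uniformly near $(0,1)$. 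Fixing $R$ large and letting $f_s=\exp_f(s\,v_R)$, the decisive identity gives $\Lambda'(0)=0$ and $\Lambda''(0)=\max_w\FF''(v_R+w,v_R+w)=\FF''(\til v,\til v)+O(e^{-cR^2})<0$; hence $\la(f_s)<\la(f)$ for $0<|s|<\delta$, and restricting the parameter interval to $(-\delta,\delta)$ finishes the proof.

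\emph{Main obstacle.} The crux is the truncation in the last step: one must shrink the $\FF$-unstable variation to compact support without losing the strict negativity of $\FF''$ or the transversality to $v_{\mathrm{tr}},v_{\mathrm{di}}$ that keeps the entropy maximizer pinned at $(0,1)$ to first order in $s$, and this is exactly what polynomial energy density growth is for --- it forces the weighted error integrals over $B_{2R}\setminus B_R$ to decay like $e^{-cR^2}$. A secondary point, ensured by the non-cylindrical hypothesis, is the attainment, isolation, and nondegeneracy of the maximizing pair $(x_0,t_0)$, without which both the implicit function theorem step and the decisive identity break down.
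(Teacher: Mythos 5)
Your envelope identity $\Lambda''(0)=\max_{w}\FF''(v+w,v+w)$ is the right heuristic and is consistent with the structure of Proposition \ref{prop:2ndvarsol}, but the argument has two genuine gaps. The first is the global localization of the maximizing basepoint, which you dispose of by asserting that $(x_0,t_0)\mapsto\FF_{x_0,t_0}(g)$ is proper and decays at infinity, uniformly for $g$ near $f$. Polynomial energy density growth does not give this: as $|x_0|\to\infty$ the Gaussian centered at $x_0$ samples $\brs{Tf}^2$ near $x_0$, which may itself grow polynomially, so $\FF_{x_0,t_0}(f)$ need not tend to $0$; and the decay as $t_0\to 0$ (Lemma \ref{lem:Fdecaylemma}) is only pointwise in $x_0$ and gives no uniformity in $(x_0,s)$ by itself. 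What is actually true, and what the theorem needs, is that $\FF_{x_0,t_0}(f)$ stays a definite amount $\delta_\ge$ below $\la(f)$ outside any neighborhood of $(0,1)$ --- this is Lemma \ref{lem:lem7.10CM}, whose proof requires Lemma \ref{lem:harsoltnconst} (harmonic solitons are constant), non-cylindricity, and monotonicity of $\Xi$ along the paths $(sy,1+hs^2)$ --- together with uniform control of the $s$-dependence in three regimes (compact basepoints, $|x_0|$ large, $\brs{\log t_0}$ large); the small-$t_0$ regime in particular needs the covering and derivative-bound argument of Step (5) of the paper's proof. Without all of this the IFT-continued critical point need not be the global maximizer, $\Lambda$ need not be smooth or even equal to $\Xi(x_0(s),t_0(s),s)$, and your decisive identity proves nothing about $\la(f_s)$. (The nondegeneracy of the basepoint Hessian at $(0,1)$ that your IFT step requires is likewise a consequence of these lemmas, not of the hypotheses directly.)

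The second gap is the direction ``entropy-decreasing variation $\Rightarrow$ $\calf$-unstable.'' A strict maximum of $\Lambda$ at $s=0$ only gives $\Lambda''(0)\leq 0$, hence $\max_w\FF''(v+w,v+w)\leq 0$, which is compatible with $\calf$-stability as defined in Definition \ref{def:Fstable} (one only needs some $(q,V)$ with $\FF''(q,V,X)\geq 0$, and equality to zero suffices). Your ``perturbation of $v$ within the kernel'' is not an argument: there is no reason a nearby direction must make the form strictly negative. The paper handles this direction as a contrapositive --- assuming $\calf$-stability, it picks for each variation the $(q,V)$ guaranteed by the definition and reads off $\la(f_s)\geq\FF_{x_s,t_s}(f_s)\geq\FF_{0,1}(f)=\la(f)$ from the sign of the second variation along that path --- and you should restructure your argument similarly or otherwise address the degenerate case $\Lambda''(0)=0$. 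By contrast, your forward direction (truncating an unstable direction and using smallness of the Gaussian-weighted tails) is essentially sound in outline, though the tail of a $W_f^{2,2}$ field is merely $o(1)$ in $R$ rather than $O(e^{-cR^2})$, which is still enough to preserve strict negativity.
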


The notion of $\FF$-stability is a more tractable notion involving the spectrum
of a certain kind of second variation operator associated to the the entropy
functional (see (\ref{Lfdef})).  Given this, one would like to simplify the
task of testing for stability further by obtaining a characterization in terms
of Rayleigh quotients.  The quantity $\mu_1$ in the statement below is the
infimum of the relevant Rayleigh quotient (see Definition \ref{mudefn}), which,
due to the noncompactness of the domain, may equal $-\infty$.

\begin{thm} \label{thm:rayleigh} Suppose $f : \bRm \to N$ is a soliton
with polynomial energy density growth.  If $\mu_1 < - \frac{3}{2}$ then $f$ is
$\FF$-unstable. 
\end{thm}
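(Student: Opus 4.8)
\noindent\textit{Proof proposal.}
The plan is to argue directly from the characterization of $\FF$-stability in \S\ref{s:entropystability}. Following the Colding--Minicozzi template, a soliton $f$ is $\FF$-stable precisely when, for every compactly supported variation field $V$ along $f$, one can choose a translation parameter $y\in\bRm$ and a dilation parameter $h\in\mathbb R$ making the full second variation of $\FF$ in the coupled directions $(V,x_0,t_0)$ nonnegative. Since the center and scale variations realize a strict maximum of the entropy at a soliton, optimizing over $(y,h)$ is harmless and produces a \emph{compensated} second variation
\[
Q(V)\;=\;-\ip{L_fV,V}_w\;+\;\mathrm{corr}(V),\qquad w:=e^{-\brs{x}^2/4},
\]
where $\mathrm{corr}(V)\ge 0$ is the contribution of the $(x_0,t_0)$ directions and $L_f$ is the operator of \eqref{Lfdef}. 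Hence $f$ is $\FF$-\emph{un}stable as soon as some compactly supported $V$ satisfies $Q(V)<0$, and the whole proof reduces to exhibiting such a $V$.

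The one substantive ingredient is the action of $L_f$ on the symmetry directions. Differentiating the soliton equation $\tau(f)=\tfrac12\nabla_{\mathbf x}f$ ($\mathbf x$ the radial vector field on $\bRm$) in the coordinate directions, and commuting covariant derivatives via the Ricci identity for the pulled-back connection on $f^*TN$, one finds that the $m$ translation fields $\nabla_{\partial_{x^i}}f$ are eigensections of $L_f$ with eigenvalue $\tfrac12$, while the dilation field $\nabla_{\mathbf x}f=2\tau(f)$ is an eigensection with eigenvalue $1$; the polynomial energy density growth hypothesis puts all of these in $L^2_w$, in mutually orthogonal eigenspaces. The optimal compensating $(y,h)$ for a given $V$ has the effect of subtracting the $L^2_w$-orthogonal projection of $V$ onto the span $E$ of these fields, so that $\mathrm{corr}(V)=\ip{L_fP_EV,P_EV}_w$ equals $\tfrac12\nm{P_{\mathrm{tr}}V}{w}^2+\nm{P_{\mathrm{dil}}V}{w}^2$, where $P_{\mathrm{tr}},P_{\mathrm{dil}}$ are the projections onto the translation and dilation eigenspaces. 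Bounding each projection norm by $\nm{V}{w}$ yields the budget estimate
\[
\mathrm{corr}(V)\;\le\;\tfrac12\nm{V}{w}^2+\nm{V}{w}^2\;=\;\tfrac32\nm{V}{w}^2;
\]
the constant $\tfrac32$ is exactly the sum of the translation eigenvalue $\tfrac12$ and the dilation eigenvalue $1$.

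Granting this, the theorem follows at once. If $\mu_1<-\tfrac32$ then, since $\mu_1$ is an infimum of Rayleigh quotients $-\ip{L_fV,V}_w\big/\nm{V}{w}^2$ over compactly supported $V$, there is a compactly supported $V\ne0$ with $-\ip{L_fV,V}_w<-\tfrac32\nm{V}{w}^2$; combining with the budget estimate,
\[
Q(V)\;\le\;-\ip{L_fV,V}_w+\tfrac32\nm{V}{w}^2\;<\;0,
\]
so $f$ is $\FF$-unstable.

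The step I expect to be the main obstacle is making the compensation/budget estimate rigorous in the noncompact setting. The infimum $\mu_1$ need not be attained by a genuine eigensection --- it may belong to the essential spectrum, or even equal $-\infty$ --- so one must work throughout with compactly supported test sections. Moreover the symmetry fields $\nabla_{\partial_{x^i}}f$ and $\nabla_{\mathbf x}f$ grow polynomially and are not compactly supported, so the ``projection onto $E$'' cannot be carried out literally within the class of admissible variations; it has to be encoded through the finite-dimensional parameters $(y,h)$, the relevant couplings $\ip{V,\nabla_{\partial_{x^i}}f}_w$ and $\ip{V,\tau(f)}_w$ being finite for compactly supported $V$ thanks to the Gaussian weight. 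One also has to handle the off-diagonal block of the $(x_0,t_0)$-Hessian --- by verifying it vanishes at a soliton, or by absorbing it into the estimate --- but under the polynomial growth hypothesis this should be a routine computation rather than a conceptual difficulty.
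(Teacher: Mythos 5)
Your proposal is correct and is essentially the paper's argument: the entire content is the budget estimate that optimizing over the $(x_0,t_0)$-directions can raise the second variation by at most $\tfrac32\int\brs{V}^2G_0\,dV$, with $\tfrac32=\tfrac12+1$ coming from the translation and dilation modes, exactly as you say. Two points of comparison. First, a sign convention you must fix: for the operator \eqref{Lfdef} one has $L^f(\zeta\hook Tf)=-\tfrac12\,\zeta\hook Tf$ and $L^f\tau=-\tau$, the second variation in Proposition \ref{prop:2ndvarsol} carries $+\ip{V,L^fV}$, and Definition \ref{mudefn} has no minus sign in the Rayleigh quotient; your displayed formulas are consistent only if your $L_f$ denotes $-L^f$, so all signs need to be flipped, though this does not affect the logic. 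Second, the execution differs mildly: the paper takes a genuine Dirichlet eigenfield of $L^f$ on a large ball $B_R$ (using $\mu_1(R)\searrow\mu_1$), absorbs the $y$-term by a pointwise Young inequality at cost $\tfrac12\brs{V}^2$, and writes the remaining $h$-terms together with the leftover $-\brs{V}^2$ as $-\brs{V+h\tau}^2\le 0$; you instead take a compactly supported near-minimizer straight from the definition of $\mu_1$ and carry out the exact finite-dimensional optimization over $(h,y)$, reading the answer as projections onto the symmetry directions. Your route is if anything cleaner: no eigenfield is needed, which disposes of your worry about non-attainment, and the couplings $\ip{V,\zeta\hook Tf}_w$, $\ip{V,\tau}_w$ are indeed finite for compactly supported $V$ under the polynomial growth hypothesis. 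The cross terms you flag also vanish ($\FF''$ has no $h\,y$ term, and $\int\ip{\zeta\hook Tf,\tau}G_0\,dV=0$ by Corollary \ref{cor:lem3zhang}); exploiting that orthogonality would in fact bound your $\mathrm{corr}(V)$ by $\nm{V}{w}^2$ and sharpen the threshold from $-\tfrac32$ to $-1$ --- compare the remark following the paper's proof.
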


Given the equivalence between $\la$-stability and $\calf$-stability, we then
consider the problem of classifying $\la$-stable solutions into various target
spaces by studying $\calf$-stability via the Rayleigh quotient $\mu_1$.  We
first note that in principle one
should expect a large variety of stable solitons for the harmonic map flow.  In
particular, as shown in \cite{CD}, given $m,n\geq 3$, for any homotopically
non-trivial class in $C^1(S^m,S^n)$ there is an $\epsilon>0$ such that any
$f_0:S^m\to S^n$ in the class with the energy below $\epsilon$ is the initial
condition for a heat flow which goes singular in finite time, leading to soliton
blowup models which should be stable. On the other hand the entropy of these solutions cannot be too small due to the $\ge$-regularity results of Struwe \cite{Struwe1}.  Our result gives a uniform upper bound for the entropy of stable solitons mapping into the sphere, complementing these results.

\begin{thm}\label{thm:main2}  Suppose $f:\bRm\to S^n$ is an entropy stable 
soliton. Then
\begin{equation*}
\gl(f)\leq\tfrac{3n}{4(n-2)}.
\end{equation*}
\end{thm}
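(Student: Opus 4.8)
The plan is to derive the bound by feeding well-chosen test sections into the Rayleigh-quotient reformulation of stability coming from Theorems \ref{thm:main1} and \ref{thm:rayleigh}; the sections will be the conformal vector fields on $S^n$ used in the classical instability argument of Xin for harmonic maps to spheres, now weighted by the Gaussian density. Assume $n\geq 3$ (otherwise there is nothing to prove), and observe that since the Gaussian weight factorizes, a soliton that is constant in some Euclidean directions has the same entropy as the induced lower-dimensional soliton; by induction on the dimension of the domain we may therefore assume $f$ is non-cylindrical, and non-constant (a constant map has $\la(f)=0$). By Theorem \ref{thm:main1}, entropy stability of $f$ is equivalent to $\calf$-stability, so the contrapositive of Theorem \ref{thm:rayleigh} gives $\mu_1 \geq -\tfrac32$. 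Here $\mu_1=\inf_V \brs{\brs{V}}_\rho^{-2}\,Q(V)$, the infimum over admissible sections $V$ of $f^*TS^n$, where $\brs{\brs{\cdot}}_\rho$ is the $L^2$ norm against $\rho=(4\pi)^{-m/2}e^{-\brs{x}^2/4}$ and $Q$ is the quadratic form determined by (\ref{Lfdef}); one checks from its definition that $Q$ is the $\rho$-weighted second variation of the Dirichlet energy, $Q(V)=\int_{\bRm}\brk{\brs{\N V}^2-\sum_\alpha\prs{\brs{\del_\alpha f}^2\brs{V}^2-\ip{V,\del_\alpha f}^2}}\rho\,dx$.

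Next I would construct the test fields. Fix an orthonormal basis $e_1,\dots,e_{n+1}$ of $\mathbb{R}^{n+1}\supset S^n$, set $f^k=\ip{e_k,f}$, and let $V_k=e_k-f^k f$ be the tangential part along $f$ of the constant ambient field $e_k$. Each $V_k$ is a bounded section of $f^*TS^n$, and a short computation gives $\N V_k=-f^k\,df$, so $\brs{\N V_k}^2=(f^k)^2\brs{df}^2$; hence $V_k$ lies in the weighted Sobolev space by the assumed polynomial energy density growth, and (multiplying by a standard cutoff $\eta_R$ and letting $R\to\infty$, with vanishing correction terms) one may take the $V_k$ compactly supported if the infimum defining $\mu_1$ ranges over such. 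Using $\brs{f}^2=1$, $\ip{\del_\alpha f,f}=0$ and $\sum_k(f^k)^2=1$, one obtains the pointwise trace identities
\begin{equation*}
\sum_k\brs{V_k}^2=n,\qquad \sum_k\brs{\N V_k}^2=\brs{df}^2,\qquad \sum_k\sum_\alpha\prs{\brs{\del_\alpha f}^2\brs{V_k}^2-\ip{V_k,\del_\alpha f}^2}=(n-1)\brs{df}^2,
\end{equation*}
so that $\sum_k Q(V_k)=\int_{\bRm}\prs{\brs{df}^2-(n-1)\brs{df}^2}\rho=(2-n)\int_{\bRm}\brs{df}^2\rho$, while $\sum_k\brs{\brs{V_k}}_\rho^2=n$. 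Finally, since $f$ is a soliton the entropy is attained at the canonical base point and $\la(f)=\calf(f)=\int_{\bRm}\tfrac12\brs{df}^2\rho$, whence $\sum_k Q(V_k)=-2(n-2)\la(f)$.

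To conclude, note $\mu_1\,\brs{\brs{V_k}}_\rho^2\leq Q(V_k)$ for every $k$ by definition of $\mu_1$; summing over $k$ and using the two identities above,
\begin{equation*}
\mu_1\leq\frac{\sum_k Q(V_k)}{\sum_k\brs{\brs{V_k}}_\rho^2}=-\frac{2(n-2)}{n}\la(f),
\end{equation*}
and combining with $\mu_1\geq-\tfrac32$ gives $\tfrac{2(n-2)}{n}\la(f)\leq\tfrac32$, i.e.\ $\la(f)\leq\tfrac{3n}{4(n-2)}$.

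The main obstacle is bookkeeping rather than any single hard estimate: the precise constant $\tfrac{3n}{4(n-2)}$ emerges only if the form $Q$ behind $\mu_1$ coincides with the weighted second variation of energy with \emph{no} surviving zeroth-order term and the normalization $\la(f)=\int\tfrac12\brs{df}^2\rho$ is used, so the delicate point is to unwind the definitions in (\ref{Lfdef}) and Definition \ref{mudefn} and verify these match — a stray term $c\brs{V}^2$ in $Q$ would instead produce $\tfrac{n(3-2c)}{4(n-2)}$. Secondary points requiring care are the admissibility/approximation of the non-compactly supported $V_k$ (where polynomial energy density growth is used, and which also forces the hypotheses of Theorem \ref{thm:main1}) and the cylindrical reduction at the outset.
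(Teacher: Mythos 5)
Your proof is correct and follows essentially the same route as the paper: Theorem \ref{thm:main1} plus the contrapositive of Theorem \ref{thm:rayleigh} give $\mu_1\geq-\tfrac{3}{2}$, and testing the Rayleigh quotient with the conformal fields $V_k=e_k-f^kf$ together with the trace identities $\sum_k\brs{V_k}^2=n$ and $\sum_kQ(V_k)=(2-n)\int_{\bRm}\brs{Tf}^2G_0\,dV=-2(n-2)\la(f)$ yields the bound. Your dimensional reduction to the non-cylindrical case and the cutoff approximation of the non-compactly-supported $V_k$ are careful additions that the paper's own proof leaves implicit.
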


To prove Theorem \ref{thm:main2} we first use the equivalence between entropy
stability and $\FF$-stability provided by Theorem \ref{thm:main1}.  We then
construct test variation fields for the $\FF$-stability condition using
conformal vector fields on the sphere.  We cannot directly show the existence of
negative eigenvectors in the space of conformal vector fields and so instead we
rely on Theorem \ref{thm:rayleigh}.  The use of
conformal vector fields in understanding stability of harmonic maps into spheres
is classical, see \cite{RTSMITH}.

Lastly, we observe results relating entropy stability and $\FF$-stability to a
kind of ``dimensional instability'' for harmonic map heat flow.  Taking a cue
from classical results we show in Proposition
\ref{prop:solitonintospheres} that any nonconstant soliton whose image is
contained in a great sphere is entropy unstable.  The vector field exhibiting
instability is the conformal vector field with poles at the vectors orthogonal
to the given great sphere.  This suggests that, in studying harmonic maps into
spheres, at a singular time one could embed the map into a sphere of one higher
dimension as a great sphere, and then perturb the map into the extra dimension
to remove the unstable singularity.

As it turns out this strategy is successful, and any map landing strictly in a
hemisphere admits a smooth long time solution to harmonic map flow.  A
well-known result of Gordon \cite{Gordon} says that any harmonic map into a
region admitting a convex function is constant.  We show a related result
which says that if a map
has image contained in a sublevel set of a boundary-defining convex function,
the solution to
harmonic map
flow with this initial condition exists smoothly for all time and converges to a
point.  Lin-Wang show results of this kind, and indeed our result is
effectively a corollary of (\cite{LW} Theorem 5.4.3).  We require a little
bit of care, as the results of Lin-Wang implicitly assume completeness of the
target, which is not satisfied in our setting.  Indeed, we provide examples
in \S \ref{s:dimaug} that various natural statements about harmonic map flow
with convex
target are false.  These issues are resolved by observing a
parabolic maximum principle which shows that
sublevel sets of boundary-defining convex functions are preserved under harmonic
map flow.

\begin{thm} \label{ltethm} Let $(M^n, g)$ be a compact Riemannian manifold, and
suppose $(N, \del N, h)$ is a compact Riemannian manifold with boundary which is
a sublevel set of a
strictly
convex boundary-defining function.  Given $f : M \to N$ a smooth map, the
solution to harmonic map heat flow with initial condition $f$ exists for all
time and converges to a point.
\end{thm}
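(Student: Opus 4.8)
The plan is to combine a parabolic maximum principle -- showing the sublevel set constraint is preserved under the flow -- with the convergence machinery of Lin-Wang, while circumventing the completeness hypothesis implicit in their argument. First I would set up notation: write $N = \{ \psi \leq 0\}$ where $\psi : \widehat{N} \to \mathbb{R}$ is the strictly convex boundary-defining function on a slightly larger ambient manifold $\widehat N$ (extending a collar past $\del N$), so that $\del N = \{\psi = 0\}$ and $d\psi \neq 0$ there, with $\Hess \psi \geq c\, h$ for some $c > 0$ on a neighborhood of $N$. Given a smooth solution $f_t : M \to N$ to harmonic map heat flow on a maximal time interval $[0, T)$, consider the composition $u(x,t) = \psi(f_t(x))$. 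A direct computation using the harmonic map flow equation $\del_t f = \tau(f)$ and the chain rule gives
\begin{equation*}
\left( \del_t - \gD_M \right) u = - \Hess \psi \left( df, df \right) \leq 0,
\end{equation*}
since $\Hess \psi \geq 0$ along $f$. By the parabolic maximum principle on the compact manifold $M$, $\max_M u(\cdot, t)$ is nonincreasing; since $u(\cdot, 0) \leq 0$ (as $f_0$ maps into $N$, and in fact into the interior if we first shrink to $\{\psi \leq -\epsilon_0\}$ for small $\epsilon_0 > 0$, which we may since $M$ is compact and $f_0(M)$ is a compact subset of the interior), we get $f_t(M) \subset \{\psi \leq -\epsilon_0\}$ for all $t \in [0,T)$. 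Thus the flow stays in a \emph{fixed compact subset of the interior} of $N$, uniformly bounded away from $\del N$.

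With the image trapped in a fixed compact set $K \subset \mathrm{int}\, N$ for all time, the completeness issue disappears: one can modify the metric $h$ outside a neighborhood of $K$ to be complete without affecting the flow, so the hypotheses of (\cite{LW} Theorem 5.4.3) are met. Alternatively, and more self-containedly, I would run the standard Eells-Sampson--type argument directly. The energy $E(f_t)$ is nonincreasing with $\frac{d}{dt} E(f_t) = - \int_M |\tau(f_t)|^2$, and the image lying in a region with a strictly convex function gives a Bochner estimate: computing $\left( \del_t - \gD_M \right) e(f_t)$ where $e(f_t) = \tfrac{1}{2}|df_t|^2$, the curvature term involves $\Rm^N$ restricted to $K$ (bounded) and the convexity can be leveraged to control it, or more simply one uses that a strictly convex function forces the relevant sectional curvature contributions to be dominated. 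The key point is that on the fixed compact $K$ all geometric quantities of the target are bounded, so together with the energy monotonicity one obtains uniform $C^0$ and then, via standard parabolic bootstrapping (as $|df_t|$ stays bounded), uniform $C^\infty$ bounds on $f_t$, ruling out finite-time blowup: $T = \infty$.

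For the long-time convergence to a point, I would use the strict convexity quantitatively: since $\Hess \psi \geq c\, h$ on $K$, the function $t \mapsto \max_M \psi(f_t)$ satisfies, by the maximum principle applied to the displayed equation, the differential inequality forcing it to decrease at a rate controlled by $|df_t|^2$, while also $\int_0^\infty \int_M |\tau(f_t)|^2 < \infty$ forces $\tau(f_{t_k}) \to 0$ along a sequence; the subconverging limit $f_\infty$ is then a harmonic map into the region $\{\psi \leq 0\}$ admitting the strictly convex function $\psi$, hence constant by Gordon's theorem \cite{Gordon}. Finally, to upgrade subsequential convergence to full convergence and to a single point, I would use that once $f_t$ is sufficiently energy-close to a constant map the flow converges exponentially (a \L ojasiewicz--Simon inequality, or directly: near a constant map the harmonic map operator is essentially the Laplacian, which has a spectral gap on compact $M$). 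The main obstacle, and the reason care is needed, is precisely the interplay flagged in the introduction: the target is \emph{not complete}, so one must first establish the maximum-principle confinement to a fixed interior compact set before \emph{any} of the standard estimates can be invoked; the examples in \S\ref{s:dimaug} show that without this step the naive statements fail. Once confinement is in hand, everything reduces to the classical compact-target theory of Eells-Sampson plus Gordon's rigidity.
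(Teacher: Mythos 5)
Your maximum-principle confinement step is exactly the one the paper uses: composing with the boundary-defining convex function $F$ gives $(\del_t - \gD)(F \circ f_t) = -\N^2 F(Tf_\ga, Tf_\ga) \leq 0$, so the image stays in a fixed compact sublevel set $F^{-1}(-\infty, c-\ge]$ in the interior of $N$. The gap is in how you pass from confinement to long-time existence. Your route (b), ``run the standard Eells--Sampson argument directly,'' does not work: confinement to a compact region with bounded geometry does not prevent finite-time blowup (harmonic map flow into $S^n$ blows up in finite time even though the target is compact with bounded curvature), and the positive curvature term $\ip{R^N(df,df)df,df}$ in the Bochner formula is not ``dominated'' by the mere existence of a strictly convex function in any pointwise sense. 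Your route (a) is closer but still has a hole: after modifying the metric outside a neighborhood of $K$ to make the target complete, you must still exhibit a \emph{globally} defined strictly convex function with quadratic growth on the modified manifold and its universal cover in order to invoke the statement of \cite{LW} Theorem 5.4.3, and this is not automatic from the construction --- for instance the paper's own completion, the double $\bar{N} = N \sqcup_{\del N} N$, is closed and therefore admits no strictly convex function at all.

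What the paper actually does is finer: it completes the target by doubling, applies \cite{LW} Proposition 5.4.2 to extract from any hypothetical singularity an $n$-obstruction (a bounded-gradient eternal blowup solution) mapping into $\bar{N}$, and then uses the maximum-principle confinement to conclude that this $n$-obstruction in fact maps into $\overset{\circ}{N}$, where the strictly convex function is available; the \emph{argument} (not the statement) of Theorem 5.4.3 then rules out such an obstruction. In other words the convexity enters through the monotonicity-formula analysis of blowup limits, not through a Bochner estimate and not through the hypotheses of 5.4.3 applied to the completed target. Your treatment of convergence is, if anything, more careful than the paper's (which asserts only subconvergence and invokes Gordon's theorem), but the existence step needs the $n$-obstruction mechanism to close.
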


This theorem in particular implies the result mentioned above that maps into
hemispheres admit long time solutions to harmonic map flow which converge to
points.  This
suggests a different way to move past singularities for maps into
spheres, by perturbing into a hemisphere of a sphere of one dimension higher. 
Moreover, this result motivates some conjectures about the structure
of solitons mapping into spheres by perturbing the maps into a hemisphere of one
dimension higher and studying the behavior of the flow at infinity.  This is
discussed in \S \ref{ss:conjs}.

Here is an outline of the rest of the paper.  In \S \ref{s:Ffunctentropy} we
record some fundamental properties of the $\FF$-functionals and entropy.  In \S
\ref{s:euclrestr} refine these results in the case of a Euclidean source.  Then
in \S \ref{s:entropystability} we prove Theorem \ref{thm:main1}.  In \S
\ref{s:stabilityrigidity} we give stability and rigidity results for solitons,
focusing on the sphere and leading to Theorem \ref{thm:main2}.  We conclude in
\S \ref{s:dimaug} with the proof of Theorem \ref{ltethm}.

\subsection*{Acknowledgments.}

The second author gratefully acknowledges support from the Simons Center for
Geometry and Physics, Stony Brook University at which some of the research for
this paper was performed. The authors would like to thank Richard Schoen for his
insight and support.

\section{ \texorpdfstring{$\mathcal{F}$}{F}-functional and
entropy}\label{s:Ffunctentropy}

In this section we provide some background and set notation concerning harmonic
maps, and provide the basic definitions of the $\mathcal{F}$-functional and its
stability.

\subsection{Background and notation}
Let $(M^m,g)$ and $(N^n,h)$ be two closed Riemannian manifolds, and $f \in
C^{\infty}(M,N)$. We will consider a connection $\N$ defined on the tensor
bundle $TM \ten f^* (TN)$ induced by the Levi-Civita connection on $M$ and $N$.
We will drop all superscripts written on the connections unless necessary, and
will generally abuse notation by letting $\N$ denote the various connections on
different bundles.

When considering local coordinates about some point, we will use Greek indices
$\{ x^{\ga} \}_{\ga = 1}^m$ for coordinates on $M$ and Roman indices $\{ y^{i}
\}_{i=1}^n$ for coordinates on $N$. Furthermore, we will write $\del_{\ga} :=
\frac{\del}{\del x^{\ga}}$ and $\del_{i} := \frac{\del}{\del y^{i}}$ to span
$TN$. We abuse notation by letting $\del_i$ simultaneously denote an element of
$TN$ and $f^*(TN)$. The associated duals will be given by $d^{\ga}$ and $d^i$,
respectively.
Let $Tf:TM\to f^*(TN)$ be the \emph{differential of $f$}, given in local
coordinates by
\begin{align*}
Tf=\prs{\del_\alpha f^i} d^\ga\otimes\del_i.
\end{align*}
More generally, subscripts will indicate spatial derivatives of various items.
The action of $\N$ on elements of $f^*(TN)$ has the coordinate form
\begin{equation*}
\N_{\ga} \del_j = \prs{\del_\alpha f^i} \N_i \del_j = \prs{\del_\alpha
f^i}\gG^k_{ij}\del_k,
\end{equation*}
where $\gG_{ij}^k$ denotes the connection coefficients of the Levi-Civita
connection in local coordinates on $N$.

\subsection{Entropy and solitons}

For a smooth map $f \in C^{\infty} ((M,g), (N,h))$, we define the \emph{energy
density}
by
\begin{align*}
e(f) := \frac{1}{2} \brs{Tf}^2_{g,h},
\end{align*}
and the \emph{total energy} by
\begin{align*}
\mathcal E(f) := \int_M e(f) dV_g.
\end{align*}
The \emph{tension field}, which is the negative gradient of
the total energy, is given by
\begin{equation*}
\tau(f) := \tr_g (\N Tf),
\end{equation*}
The \emph{harmonic map heat flow}, or negative gradient flow of the
energy, is given by
\begin{equation}\label{eq:HMHF}
\frac{\del f_t}{\del t} = \tau(f_t). \tag{\textsf{HMHF}}
\end{equation}
We will suppress the dependence on $f_t$ in the notation for $\tau_t$ from this
point forward. We next define the $\FF$-functional as well as the entropy for a
map
of Riemannian manifolds.

\begin{defn} \label{Ffunc} Given $(M^m, g)$ and $(N^n, h)$ Riemannian manifolds,
define a functional $\FF$ via
\begin{align}\label{eq:Ffunc}
\begin{split}
\mathcal{F} &:C^{\infty}(M,N) \times \mathbb{R} \times  C^{\infty}(M)  \to
\mathbb{R} \\
\FF& (f, \upsilon, \theta) = \frac{\upsilon}{2} \int_M \brs{Tf}^2_{g,h}
\frac{e^{-\theta}}{(4 \pi \upsilon )^{m/2}} dV_g.
\end{split}
\end{align}
Moreover we define the \emph{entropy} $\gl$ by
\begin{align}\label{eq:defEntropy}
\begin{split}
\lambda
&: C^{\infty}(M,N) \to \mathbb{R} \\
\lambda & (f) = \sup_{\left\{ \upsilon, \theta : \int_M  e^{-\theta}(4 \pi
\upsilon )^{-m/2}  dV_g = 1 \right\} } \mathcal{F}(f,\upsilon,\theta).
\end{split}
\end{align}
\end{defn}

Zhang \cite{Zhang} defines a functional of this kind for maps from $\mathbb
R^m$, in direct analogy with Colding-Minicozzi's definition for mean curvature
flow.  There one only considers weighting against Gaussian densities with
different basepoints in $\mathbb R^m \times \mathbb R_{\geq 0}$.  This suffices
for the purposes of understanding singularity models in both cases.  We have
generalized
this to a more flexible entropy functional more akin to Perelman's Ricci flow
entropy
\cite{Perelman}, now defined on arbitrary Riemannian manifold by allowing
weighting against arbitrary probability measures.

We will adapt
Hamilton-Struwe's monotonicity formula \cite{Ham2, Struwe1} to establish a
monotonicity for
$\lambda$ in \S \ref{ss:monotonicity}.  These monotone quantities $\FF$ and
$\lambda$ are central to understanding singularities of harmonic map flow. 
Moreover, the critical points of $\lambda$ are exactly self shrinking solutions.
The primary disadvantage of $\lambda$ is that it does not depend smoothly on
$C^{\infty}(M,N)$. Therefore, to overcome this in using $\lambda$ to define the
stability of solitons, we will 
demonstrate that it is essentially sufficient to check classical stability of
the functional $\mathcal{F}$.  This analysis will be performed in \S
\ref{s:entropystability} in the case where the source manifold is flat Euclidean
space.

\subsection{Variational properties}\label{ss:variational}

Next we compute the first and second variations of the
$\mathcal{F}$-functional (Definition \ref{Ffunc}). In the next section \S
\ref{s:euclrestr} we will restrict to the case of Euclidean space, and consider
a more restricted form of the entropy functional $\lambda$.  Throughout this
section we will suppose that the given geometric data is regular enough so that
all integrations by parts are valid.  To begin we define a family of flows which
will be key in the variations.

First, given a solution to harmonic map heat flow on $[0,t_0)$, and a final
value $\Psi_{t_0} : M \to \mathbb{R}$ we consider the \emph{backwards heat flow}
given by
\begin{equation}\label{eq:BHF}
\begin{cases} \tag{\textsf{BHF}}
\tfrac{\del \Psi_t}{\del t} &= - \lap \Psi_t, \\
\left. \Psi_t \right|_{t=t_0} & = \Psi_{t_0}.
\end{cases}
\end{equation}
In the case that $\Psi_{t_0}$ is a Dirac delta function based at $x_0$, we say
that this is the \emph{backwards heat flow based at $(x_0,t_0)$}.
Next we consider the following two flows: first,
\begin{equation}\label{eq:LF}
\begin{cases}
\del_t \upsilon_t &= -1, \\
\left. \upsilon_t \right|_{t=0} &= \upsilon_0,
\end{cases}
\end{equation}
and then
\begin{equation}\label{eq:TF}
\begin{cases}
\del_t \theta_t &= \prs{ \brs{ \N \theta_t}^2 - \lap \theta_t } +  \tfrac{m}{
2\upsilon_t}, \\
\left. \theta_t \right|_{t = 0} &= \theta_0.
\end{cases}
\end{equation}

\begin{lemma}\label{lem:Thetahteq}
Suppose $\upsilon_t$ satisfies \eqref{eq:LF} and $\theta_t$ satisfies
\eqref{eq:TF}. Then
$\Theta_t := (4 \pi \upsilon_t)^{-m/2}e^{-\theta_t}$ satisfies \eqref{eq:BHF}.

\begin{proof}
We simply compute
\begin{align*}
\lap \Theta_t
&= \N_{\ga} \N_{\ga} \brk{ \tfrac{e^{- \theta_t}}{(4 \pi \upsilon_t)^{m/2}} } \\
&= \N_{\ga}\brk{  \tfrac{- \prs{\N_{\ga} \theta_t} e^{- \theta_t}}{(4 \pi
\upsilon_t)^{m/2}} } \\
&=  \tfrac{- ( \lap \theta_t)}{(4 \pi \upsilon_t)^{m/2}} + \tfrac{\brs{ \N
\theta}^2 e^{- \theta_t}}{(4 \pi \upsilon_t)^{m/2}} \\
&= \tfrac{e^{-\theta_t}}{(4 \pi \upsilon_t)^{m/2}} \prs{ \brs{\N \theta_t}^2 -
\lap \theta_t } \\
& =  \tfrac{e^{-\theta_t}}{(4 \pi \upsilon_t)^{m/2}} \prs{ (\del_t \theta_t) -
\tfrac{m}{2 \upsilon_t} } \\
&= - \prs{ \tfrac{4 \pi m}{2} \tfrac{e^{-\theta_t}}{(4 \pi \upsilon_t)^{m/2 +
1}} - \tfrac{(\del_t \theta_t)  e^{-\theta_t}}{(4 \pi \upsilon_t)^{m/2}} } \\
& = - \del_t \Theta_t.
\end{align*}
The result follows.
\end{proof}
\end{lemma}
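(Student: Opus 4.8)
The statement to prove is Lemma~\ref{lem:Thetahteq}: if $\upsilon_t$ solves \eqref{eq:LF} (so $\del_t \upsilon_t = -1$) and $\theta_t$ solves \eqref{eq:TF}, then $\Theta_t := (4\pi\upsilon_t)^{-m/2} e^{-\theta_t}$ solves the backwards heat equation $\del_t \Theta_t = -\lap \Theta_t$.

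\textit{Proof plan.} The plan is to verify the identity by a direct computation, matching $-\lap \Theta_t$ against $\del_t \Theta_t$. First I would compute the spatial Laplacian: since $\Theta_t = (4\pi\upsilon_t)^{-m/2} e^{-\theta_t}$ and $\upsilon_t$ is spatially constant, $\N_\ga \Theta_t = -(\N_\ga \theta_t)\,\Theta_t$, and applying $\N_\ga$ again via the product rule gives
\[
\lap \Theta_t = \bigl( \brs{\N\theta_t}^2 - \lap\theta_t \bigr)\,\Theta_t.
\]
Next I would compute the time derivative, treating $\upsilon_t$ and $\theta_t$ as functions of $t$ alone in the prefactor and exponential respectively: the chain rule gives
\[
\del_t \Theta_t = \Bigl( \tfrac{m}{2\upsilon_t} - \del_t\theta_t \Bigr)\,\Theta_t,
\]
using $\del_t \upsilon_t = -1$ so that $\del_t (4\pi\upsilon_t)^{-m/2} = \tfrac{m}{2\upsilon_t}(4\pi\upsilon_t)^{-m/2}$.

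Then I would substitute the evolution equation \eqref{eq:TF} for $\del_t\theta_t$, namely $\del_t\theta_t = \brs{\N\theta_t}^2 - \lap\theta_t + \tfrac{m}{2\upsilon_t}$, into the expression for $\del_t\Theta_t$. The $\tfrac{m}{2\upsilon_t}$ terms cancel, leaving $\del_t\Theta_t = -\bigl(\brs{\N\theta_t}^2 - \lap\theta_t\bigr)\Theta_t = -\lap\Theta_t$, which is exactly \eqref{eq:BHF}. One should also remark that $\Theta_{t_0}$ matches the prescribed final data whenever $\upsilon_0, \theta_0$ are chosen so that $(4\pi\upsilon_{t_0})^{-m/2} e^{-\theta_{t_0}} = \Psi_{t_0}$, though the lemma as stated only concerns the PDE.

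There is no real obstacle here; the only care needed is in bookkeeping the chain rule for the $(4\pi\upsilon_t)^{-m/2}$ prefactor and being consistent about signs. The computation in the excerpt already carries this out essentially verbatim, so the proof is complete once these three substitutions are assembled.
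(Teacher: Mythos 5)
Your computation is correct and is essentially the same direct verification the paper gives: compute $\lap\Theta_t = (\brs{\N\theta_t}^2 - \lap\theta_t)\Theta_t$, compute $\del_t\Theta_t = (\tfrac{m}{2\upsilon_t} - \del_t\theta_t)\Theta_t$ using $\dot\upsilon_t = -1$, and substitute \eqref{eq:TF} to see the two match up to sign. No issues.
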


\begin{defn}
Given $\Theta \in C^{\infty}(M)$, $\Theta > 0$, let 
\begin{align*}
\textsf{S}_{\Theta}
&: C^{\infty}(M,N) \to  f^*(TN) \\
\textsf{S}_{\Theta}& (f) = \tau +  \prs{\frac{\N \Theta}{\Theta} \hook Tf }.
\end{align*}
\end{defn}

\begin{prop}\label{prop:1stvarM} Given one-parameter families
$f_s$, $v_s$, $\theta_s$, let
\begin{equation*}
\dot{\upsilon}_s = \frac{d \upsilon_s}{ds}, \qquad \dot{f}_s = \frac{d f_s}{ds},
\qquad \dot{\theta}_s = \frac{d \theta_s}{ds}.
\end{equation*}
Moreover, let $\Theta_s = (4 \pi \upsilon_s)^{-m/2} e^{-\theta_s}$ as above. 
Assuming that $f_s \in H_{loc}^1(M, N)$ and ${f}_s$ satisfies
\begin{equation*}
\int_M \prs{ \brs{\dot{f}_s}^2 + \brs{\N f_s}^2 + \brs{ \tfrac{\N
\Theta_s}{\Theta_s} }^2 \brs{ T f_s}^2 + \brs{\tau_s}^2} \Theta_s dV_g < +
\infty,
\end{equation*}
then
\begin{align}
\begin{split}\label{eq:1stvarM}
\frac{d}{ds} \lb \mathcal{F} ( f_s,\upsilon_s, \theta_s) \rb &=  \frac{1}{2}
\int_{M} \left(\dot{\upsilon}_s  + \upsilon_s \tfrac{\dot{\Theta}_s}{\Theta_s}
\right) | T f_s |^2  \Theta_s  dV_g  - \upsilon_s \int_{M} \left\langle
\dot{f}_s, \mathsf{S}_{\Theta_s}(f_s) \right\rangle \Theta_s dV_g.
\end{split}
\end{align}

\begin{proof}
We first compute, and apply integration by parts to obtain
\begin{align}
\begin{split}\label{eq:1stvarM10}
\frac{d}{ds} \lb \int_{M} | Tf |^2 \Theta dV_g \rb
&= 2 \int_{M}  \langle \N_s T f_{\ga} , Tf_{\ga} \rangle \Theta dV_g + \int_{M} 
| Tf |^2 \dot{\Theta} dV_g\\
&= 2 \int_{M} \langle \N \dot{f}, Tf \rangle \Theta dV_g + \int_M  | Tf |^2
\dot{\Theta} dV_g.
\end{split}
\end{align}
We manipulate the first integral,
\begin{align*}
\begin{split}
  \int_{M} \langle \N \dot{f}, Tf \rangle \Theta dV_g
  &=   \int_{M} \del_{\ga} \langle  \dot{f}, T f_{\ga} \rangle \Theta dV -
\int_{M} \langle  \dot{f}, \N_{\ga} T f_{\ga} \rangle \Theta dV_g\\
  &=  - \int_{M} \langle  \dot{f}, T f_{\ga} \rangle  \prs{\del_{\ga} \Theta}
dV_g - \int_{M} \langle  \dot{f}, \N_{\ga} T f_{\ga} \rangle \Theta dV_g\\
&= - \int_{M}  \left\langle \dot{f}, \prs{\frac{\N \Theta}{\Theta} \hook Tf }
\right\rangle  \Theta dV_g-  \int_{M} \langle \dot{f}, \tau \rangle \Theta dV_g
\\
&= - \int_{M}  \left\langle \dot{f}, \mathsf{S}_{\Theta}(f) \right\rangle 
\Theta dV_g.
\end{split}
\end{align*}
Incorporating this back into \eqref{eq:1stvarM10} we have that
\begin{align*}
\frac{d}{ds} \lb \int_{M} | Tf_s |^2  \Theta_s dV_g \rb
&= \int_{M}\tfrac{\dot{ \Theta_s}}{ \Theta_s} | Tf_s |^2    \Theta_s  dV_g  - 2
\int_{M} \left\langle \dot{f}_s, \mathsf{S}_{ \Theta_s}(f_s) \right\rangle  
\Theta_s dV_g.
\end{align*}
Therefore
\begin{align*}
\frac{d}{ds} \lb \mathcal{F} ( f_s,\upsilon_s, \theta_s) \rb 
&= \frac{\dot{\upsilon}_s}{2} \int_{M} |T f_s|^2  \Theta_s dV +
\frac{\upsilon_s}{2} \frac{d}{ds} \lb \int_{M} |T f_s|^2  \Theta_s dV_g \rb \\
&=   \frac{1}{2} \int_{M} \left(\dot{\upsilon}_s  + \upsilon_s \tfrac{\dot{
\Theta_s}}{ \Theta_s} \right) | T f_s |^2   \Theta_s  dV_g  - \upsilon_s
\int_{M} \left\langle \dot{f}_s, \mathsf{S}_{ \Theta_s} (f_s) \right\rangle 
\Theta_s dV_g.
\end{align*}
The result follows.
\end{proof}
\end{prop}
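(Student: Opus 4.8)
The plan is to differentiate the $\FF$-functional directly, treating it as a product of the scalar factor $\tfrac{\upsilon_s}{2}$ with the weighted energy integral $\int_M |Tf_s|^2 \Theta_s \, dV_g$. The product rule immediately splits $\tfrac{d}{ds}[\FF(f_s,\upsilon_s,\theta_s)]$ into a term carrying $\dot\upsilon_s$ and a term carrying the $s$-derivative of the weighted energy. So the real content is the variation of $\int_M |Tf_s|^2 \Theta_s \, dV_g$, which I would compute in the standard way for harmonic-map-type functionals.

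For that, first I would differentiate under the integral sign, producing one term from varying $|Tf_s|^2$ and one from varying the weight $\Theta_s$; this is display \eqref{eq:1stvarM10}. The variation of $|Tf_s|^2$ gives $2\langle \N_s Tf_\ga, Tf_\ga\rangle$, and here I would use the standard fact that $\N_s T f_\ga = \N_\ga \dot f_s$ (the connection on $TM \otimes f^*TN$ is symmetric in the $s$ and $x^\ga$ directions since the metric connections are torsion-free), so the term becomes $2\langle \N\dot f, Tf\rangle \Theta$. Then I would integrate by parts in $x^\ga$: write $\langle \N_\ga \dot f, Tf_\ga\rangle = \del_\ga \langle \dot f, Tf_\ga\rangle - \langle \dot f, \N_\ga Tf_\ga\rangle$, discard the total divergence (justified by the integrability hypothesis), and move the $\del_\ga$ from the first term onto $\Theta$. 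Recognizing $\N_\ga Tf_\ga = \tau$ and $(\del_\ga\Theta/\Theta)\hook Tf + \tau = \mathsf{S}_\Theta(f)$ collapses everything into $-\int_M \langle \dot f, \mathsf{S}_\Theta(f)\rangle \Theta \, dV_g$. Substituting back and reassembling with the $\dot\upsilon_s$ term yields \eqref{eq:1stvarM}.

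The main point requiring care — and the step I would flag as the principal obstacle — is the justification of the integration by parts on the noncompact domain $M = \bRm$ (and, relatedly, differentiation under the integral sign). This is precisely why the hypothesis $\int_M (|\dot f_s|^2 + |\N f_s|^2 + |\tfrac{\N\Theta_s}{\Theta_s}|^2|Tf_s|^2 + |\tau_s|^2)\Theta_s \, dV_g < \infty$ is imposed: it ensures that each integrand appearing above is in $L^1$ against the Gaussian-type weight $\Theta_s dV_g$, and, via Cauchy–Schwarz, that the cross terms $\langle \dot f, Tf\rangle$, $\langle \dot f, \mathsf{S}_\Theta(f)\rangle$ are integrable, so that the boundary term at infinity genuinely vanishes. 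In the body of the proof the authors appear to suppress this point (working "$H^1_{loc}$" and citing the integrability assumption), so in my write-up I would either do the same or, more carefully, approximate by compactly supported cutoffs and pass to the limit using the stated bounds. Beyond this analytic bookkeeping, the computation is routine.
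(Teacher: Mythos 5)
Your proposal is correct and follows essentially the same route as the paper: product rule on $\tfrac{\upsilon_s}{2}\int_M|Tf_s|^2\Theta_s\,dV_g$, differentiation under the integral, the commutation $\N_s Tf_\ga=\N_\ga\dot f$, integration by parts moving $\del_\ga$ onto $\Theta$, and identification of $\tau+\tfrac{\N\Theta}{\Theta}\hook Tf$ with $\mathsf{S}_\Theta(f)$. Your added remark on justifying the integration by parts via the stated integrability hypothesis is exactly the point the paper leaves implicit.
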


Before computing the second variation we record a lemma showing how $\tau$
varies along a path.  The cleanest way to do this is to treat the family as a
map on $M \times I$, and then one naturally observes curvature terms arising
when commuting time derivatives with
connection derivatives. An example of this is in the following lemma.

\begin{lemma}\label{lem:vartau} Suppose $f_s$ is a one-parameter family of
smooth maps for $s \in I \subset \mathbb{R}$. Then
\begin{equation*}
\left( \frac{\del \tau_s}{\del_s} \right) =  \lap \dot{f}_s + R^N(\dot{f}_s,
(Tf_s)_{\ga})(Tf_s)_{\ga}.
\end{equation*}

\begin{proof}
First one observes the commutation formula
\begin{equation}\label{eq:comutform}
\brk{\N_{\ga},\N_{\gb}} \gw_{\gz}^i
= \prs{R^M}_{\gb \ga \gz}^{\gd} \gw_{\gd}^i +  (\del_{\gb} f^l)(\del_{\ga} f^v)
( R^N)_{v l k}^i \gw_{\gz}^k.
\end{equation}
Using this we have 
\begin{align*}
\left( \frac{\del \tau}{\del_s} \right)^{i}
&= \N_s \N_{\ga} \prs{\del_{\ga} f^i} \\
&=  [\N_s, \N_{\ga}] \prs{\del_{\ga} f^i} + \N_{\ga} \prs{\del_{\ga}\dot{f}^i} 
\\
&= (\del_{\ga} f^{p})(\del_{\ga} f^j) (\dot{f}^q) (R^N)_{pqj}^{i} - (R^M)_{s \ga
\ga}^{\gd} (\del_{\ga} f^i) + \lap \dot{f}^i \\
&= (\del_{\ga} f^{p})(\del_{\ga} f^j) (\dot{f}^q) (R^N)_{pqj}^{i} + \lap
\dot{f}^i.
\end{align*}
The result follows.
\end{proof}
\end{lemma}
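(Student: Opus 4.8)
The plan is to view the whole variation as a single smooth map and thereby turn the identity into a commutation of covariant derivatives. Regard the family $\{f_s\}$ as a map $F : M \times I \to N$ with $F(x,s) = f_s(x)$, and equip the bundle $T^*(M \times I) \ten F^*(TN)$ with the connection $\N$ induced by the Levi-Civita connection of the product metric on $M \times I$ together with the Levi-Civita connection of $N$. Since the product metric is $s$-independent and splits as a direct sum, the domain part of this connection is just the original connection on $M$ together with the trivial connection in the $s$-direction. With these conventions $\dot f_s = \del_s F \in F^*(TN)$, the quantities $\del_\ga f^i$ are the $M$-components of $Tf$, and $\tau^i_s = g^{\ga \gb}\N_\ga(\del_\gb f^i)$ with $g$ not depending on $s$. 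Hence $\N_s$ passes through the metric contraction, and it suffices to compute $\N_s \N_\ga (\del_\ga f^i)$ in a local $g$-orthonormal frame on $M$.

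The second ingredient I would record is the standard commutation identity for second covariant derivatives of a section $\gw$ of $T^*M \ten f^*(TN)$: commuting $\N_\ga$ past $\N_\gb$ produces a ``domain'' term built from the curvature of $M \times I$ acting on the $T^*M$-factor and a ``target'' term built from the pullback curvature $R^N$ evaluated on $Tf$ in the two differentiated directions and acting on the $f^*(TN)$-factor. This is the usual Weitzenb\"ock bookkeeping and follows directly from the coordinate formula $\N_\ga \del_j = (\del_\ga f^i)\gG_{ij}^k \del_k$ recorded above, carried out on $M \times I$ in place of $M$.

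Applying this identity with one of the two derivatives taken in the $s$-direction gives
\begin{equation*}
\N_s \N_\ga (\del_\ga f^i) = \N_\ga \N_s (\del_\ga f^i) + \brk{\N_s, \N_\ga}(\del_\ga f^i),
\end{equation*}
where the commutator is the sum of a domain piece proportional to $(R^{M \times I})_{s \ga \ga}{}^{\gd}(\del_\gd f^i)$ and a target piece of the shape $R^N(\dot f_s, Tf_\ga)Tf_\ga$. Because $M \times I$ carries a product metric, every component of its curvature tensor involving the $s$-slot vanishes, so the domain piece disappears and only the $R^N$-term survives. For the remaining term, torsion-freeness of $\N$ gives $\N_s(\del_\ga f^i) = \N_\ga(\del_s f^i) = \N_\ga \dot f^i$, whence $\N_\ga \N_s(\del_\ga f^i) = \lap \dot f^i$. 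Summing over $\ga$ yields the stated formula.

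The only points I expect to need genuine care, as opposed to routine computation, are: fixing the curvature sign and index conventions so that the target term emerges exactly in the form $R^N(\dot f_s, (Tf_s)_\ga)(Tf_s)_\ga$, and verifying — not merely asserting — that the mixed $s$--$M$ components of the curvature of the product manifold $M \times I$ vanish, which is precisely what kills the domain-curvature contribution. Everything else is bookkeeping with the induced connection.
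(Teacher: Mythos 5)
Your proposal is correct and follows essentially the same route as the paper: treat the family as a map on $M \times I$, commute $\N_s$ past $\N_\ga$, use torsion-freeness to convert $\N_s(\del_\ga f)$ into $\N_\ga \dot f$, and observe that the domain-curvature contribution $(R^{M\times I})_{s\ga\ga}{}^{\gd}$ vanishes for the product metric, leaving only the $R^N$ term. You supply slightly more justification for the vanishing of the mixed curvature components than the paper does, but the argument is the same.
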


\begin{prop} \label{prop:2ndvarM}
Assuming the hypotheses of Proposition \ref{prop:1stvarM}, one has
\begin{align}
\begin{split}\label{eq:2ndvarM}
\tfrac{d^2}{ds^2}\brk{\mathcal{F}(f_s,\nu_s,\theta_s)}
&=\frac{\ddot{\upsilon}_s}{2}\int_M|Tf_s|^2\Theta_s dV_g- \upsilon_s\int_M \ip{
\ddot{f}_s,\tau_s +\prs{\tfrac{\N\Theta_s}{\Theta_s} \hook
Tf_s}}\Theta_sdV_g+\frac{\upsilon_s}{2}\int_M |Tf_s|^2\ddot{\Theta}_s dV_g\\
&\hsp -\dot{\upsilon}_s\int_M \prs{\ip{
\dot{f}_s,\tau_s+\prs{\tfrac{\N\Theta_s}{\Theta_s} \hook Tf_s}}
\Theta_s-|Tf_s|^2 \dot{\Theta}_s }dV_g\\
& \hsp -\upsilon_s\int_M \ip{ \dot{f}_s,(\lap
\dot{f}_s+R^N(\dot{f},(Tf_s)_{\ga})(Tf_s)_{\ga}) +\N_{\tfrac{\N
\Theta_s}{\Theta_s}} \dot{f}_s} \Theta_s dV_g \\
& \hsp - 2 \upsilon_s \int_M \ip{ \dot{f}_s, \tau_s
\prs{\tfrac{\dot{\Theta}_s}{\Theta_s}} +\prs{\tfrac{\N \dot{\Theta}_s}{\Theta_s}
\hook Tf_s} } \Theta_s dV_g.
\end{split}
\end{align}
\begin{proof}
We rewrite \eqref{eq:1stvarM} in the form
\begin{align*}
\tfrac{d}{ds}\brk{\calf(f_s,\upsilon_s,\theta_s)}
&=\brk{\frac{\dot{\upsilon}_s}{2}\int_M|Tf_s|^2\Theta_s dV_g}_{T_1} - \brk{
\upsilon_s \int_M \ip{ \dot{f}_s,\tau_s +\prs{\frac{\N\Theta_s}{\Theta_s} \hook
Tf_s } }\Theta_s dV_g}_{T_2} \\
& \hsp +\brk{\frac{\upsilon_s}{2}\int_M \brs{Tf_s}^2 \dot{\Theta}_s dV_g}_{T_3}
\\
&= T_1 - T_2 + T_3.
\end{align*}
We differentiate the term $T_1$ to yield
\begin{align*}
\frac{d}{ds}\brk{T_1}
&= \frac{\ddot{\upsilon}}{2}\int_M |Tf|^2\Theta dV_g+\dot{\upsilon} \int_M
\ip{\N \dot{f},Tf }\Theta dV_g+\frac{\dot{\upsilon}}{2}\int_M|Tf|^2 \dot{\Theta}
dV_g \\
&= \frac{\ddot{\upsilon}}{2}\int_M |Tf|^2\Theta dV_g-\dot{\upsilon} \int_M
\ip{\dot{f}, \tau +\prs{\tfrac{\N \Theta}{\Theta} \hook  Tf } }\Theta
dV_g+\frac{\dot{\upsilon}}{2}\int_M|Tf|^2 \dot{\Theta} dV_g.
\end{align*}
Using Lemma \ref{lem:vartau}, the variation of the second term gives
\begin{align*}
\frac{d}{ds}\brk{T_2}
&=\dot{\upsilon}\int_M \ip{ \dot{f},\tau +\prs{\tfrac{\N \Theta}{\Theta} \hook 
Tf }} \Theta dV_g + \upsilon \int_M\ip{ \ddot{f},\tau+\prs{\tfrac{\N
\Theta}{\Theta} \hook  Tf }} \Theta dV_g\\
&+\upsilon \int_M \ip{ \dot{f},(\lap \dot{f}+R^N(\dot{f},Tf_{\ga})Tf_{\ga})+\tau
\prs{\tfrac{\dot{\Theta}}{\Theta}}+\N \dot{f}
\prs{\tfrac{\N\Theta}{\Theta}}+\prs{\tfrac{\N \dot{\Theta}}{\Theta} \hook  Tf }
} \Theta dV_g.
\end{align*}
Finally the variation of the third term is, with an application of
integration by parts to the middle quantity,
\begin{align*}
\tfrac{d}{ds} \brk{T_3} 
&= \frac{\dot{\upsilon}}{2}\int_M \brs{Tf}^2 \dot{\Theta}dV_g + \upsilon \int_M
\ip{\N \dot{f},Tf }\dot{\Theta} dV_g +\frac{\upsilon}{2}\int_M|Tf|^2
\ddot{\Theta}
dV_g\\
&= \frac{\dot{\upsilon}}{2}\int_M \brs{Tf}^2 \dot{\Theta}dV_g  -\upsilon \int_M
\ip{ \dot{f},\tau \prs{\tfrac{\dot{\Theta}}{\Theta}} + \prs{ \tfrac{\N
\dot{\Theta}}{\Theta} \hook Tf}} \Theta dV_g+\frac{\upsilon}{2}\int_M|Tf|^2
\ddot{\Theta} dV_g.
\end{align*}
Combining and regrouping one obtains \eqref{eq:2ndvarM}.
\end{proof}
\end{prop}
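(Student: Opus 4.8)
The plan is to obtain the second variation \eqref{eq:2ndvarM} by differentiating the first variation formula \eqref{eq:1stvarM} once more in $s$; the entire computation uses only the product rule, the intermediate identity already extracted in the proof of Proposition \ref{prop:1stvarM}, Lemma \ref{lem:vartau} for $\del_s\tau_s$, and repeated integration by parts, all legitimate under the integrability hypothesis carried over from Proposition \ref{prop:1stvarM}. For bookkeeping I would first rewrite \eqref{eq:1stvarM} schematically as $\tfrac{d}{ds}\brk{\calf(f_s,\upsilon_s,\theta_s)} = T_1 - T_2 + T_3$, with $T_1 = \tfrac{\dot\upsilon_s}{2}\int_M |Tf_s|^2\Theta_s\,dV_g$, $T_2 = \upsilon_s\int_M \ip{\dot f_s,\tau_s + (\tfrac{\N\Theta_s}{\Theta_s}\hook Tf_s)}\Theta_s\,dV_g$, $T_3 = \tfrac{\upsilon_s}{2}\int_M |Tf_s|^2\dot\Theta_s\,dV_g$, differentiate each $T_i$, and then regroup $\tfrac{d}{ds}[T_1] - \tfrac{d}{ds}[T_2] + \tfrac{d}{ds}[T_3]$ by the weight ($\ddot\upsilon_s$, $\dot\upsilon_s$, or $\upsilon_s$).

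The terms $T_1$ and $T_3$ are routine. For $T_1$ the product rule plus the identity $\tfrac{d}{ds}\int_M |Tf_s|^2\Theta_s\,dV_g = 2\int_M \ip{\N\dot f_s,Tf_s}\Theta_s\,dV_g + \int_M |Tf_s|^2\dot\Theta_s\,dV_g$ from the proof of Proposition \ref{prop:1stvarM}, followed by the same integration by parts used there to replace $\ip{\N\dot f_s,Tf_s}$ by $-\ip{\dot f_s,\mathsf{S}_{\Theta_s}(f_s)}$, produces the $\tfrac{\ddot\upsilon_s}{2}$ term and two of the $\dot\upsilon_s$-weighted contributions of \eqref{eq:2ndvarM}. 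For $T_3$ one differentiates under the integral and integrates by parts the cross term $\upsilon_s\int_M \ip{\N\dot f_s,Tf_s}\dot\Theta_s\,dV_g$, which becomes $-\upsilon_s\int_M \ip{\dot f_s,\tau_s\tfrac{\dot\Theta_s}{\Theta_s} + (\tfrac{\N\dot\Theta_s}{\Theta_s}\hook Tf_s)}\Theta_s\,dV_g$; this yields the $\tfrac{\upsilon_s}{2}\int_M |Tf_s|^2\ddot\Theta_s\,dV_g$ term, the remaining $\dot\upsilon_s$ piece, and one of the two copies in the last line of \eqref{eq:2ndvarM}.

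The substantive step, and the one I expect to require the only real care, is differentiating $T_2$, i.e.\ computing $\del_s\mathsf{S}_{\Theta_s}(f_s) = \del_s\tau_s + \del_s(\tfrac{\N\Theta_s}{\Theta_s}\hook Tf_s)$: this is where commuting $\del_s$ past the spatial connection $\N$ brings in the target curvature. Lemma \ref{lem:vartau} gives $\del_s\tau_s = \lap\dot f_s + R^N(\dot f_s,(Tf_s)_\ga)(Tf_s)_\ga$, while differentiating $\tfrac{\N_\ga\Theta_s}{\Theta_s}(Tf_s)_\ga$ contributes $\N_{\tfrac{\N\Theta_s}{\Theta_s}}\dot f_s$ from the $(Tf_s)_\ga$ factor together with $(\tfrac{\N\dot\Theta_s}{\Theta_s}\hook Tf_s) - \tfrac{\dot\Theta_s}{\Theta_s}(\tfrac{\N\Theta_s}{\Theta_s}\hook Tf_s)$ from the varying weight. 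Feeding this into the product-rule expansion $\tfrac{d}{ds}[T_2] = \dot\upsilon_s\int_M \ip{\dot f_s,\mathsf{S}_{\Theta_s}(f_s)}\Theta_s\,dV_g + \upsilon_s\int_M \ip{\ddot f_s,\mathsf{S}_{\Theta_s}(f_s)}\Theta_s\,dV_g + \upsilon_s\int_M \ip{\dot f_s,\del_s\mathsf{S}_{\Theta_s}(f_s)}\Theta_s\,dV_g + \upsilon_s\int_M \ip{\dot f_s,\mathsf{S}_{\Theta_s}(f_s)}\dot\Theta_s\,dV_g$, the $-\tfrac{\dot\Theta_s}{\Theta_s}(\tfrac{\N\Theta_s}{\Theta_s}\hook Tf_s)$ piece coming from $\del_s\mathsf{S}_{\Theta_s}(f_s)$ cancels against part of the $\dot\Theta_s$-weighted copy of $\mathsf{S}_{\Theta_s}(f_s)$, and the surviving $\tau_s\tfrac{\dot\Theta_s}{\Theta_s} + (\tfrac{\N\dot\Theta_s}{\Theta_s}\hook Tf_s)$ contribution combines with the copy generated by $T_3$ to give the factor $2$ in the last line of \eqref{eq:2ndvarM}. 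After this, combining $\tfrac{d}{ds}[T_1] - \tfrac{d}{ds}[T_2] + \tfrac{d}{ds}[T_3]$ and sorting by weight gives \eqref{eq:2ndvarM}; apart from the curvature commutation, the only thing to watch is signs and keeping the $\upsilon_s$ and $\dot\upsilon_s$ weighted terms separate, and there is no analytic subtlety once the hypothesis of Proposition \ref{prop:1stvarM} is in force.
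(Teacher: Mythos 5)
Your proposal follows the paper's proof essentially verbatim: the same decomposition $T_1 - T_2 + T_3$ of the first variation, Lemma \ref{lem:vartau} for $\del_s \tau_s$, the same integrations by parts on the cross terms, and the same cancellation inside $\del_s \mathsf{S}_{\Theta_s}(f_s)$ that produces the factor $2$ in the last line of \eqref{eq:2ndvarM}. One bookkeeping remark: when you sort by weight, the coefficient of $\int_M \ip{\dot{f}_s, \mathsf{S}_{\Theta_s}(f_s)} \Theta_s\, dV_g$ in the $\dot{\upsilon}_s$-weighted line comes out as $-2\dot{\upsilon}_s$ (one copy from $\tfrac{d}{ds}[T_1]$ and one from $-\tfrac{d}{ds}[T_2]$), whereas \eqref{eq:2ndvarM} as printed records only $-\dot{\upsilon}_s$; since this term vanishes identically at a soliton, nothing downstream is affected.
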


\subsection{Monotonicity}\label{ss:monotonicity}

The following monotonicity formulas from \cite{Struwe1} and \cite{Zhang} are
essential for the computations of the proof of Theorem \ref{thm:main1} in \S
\ref{s:entropystability}. We state them, then discuss generalized versions.

\begin{lemma}[\cite{Struwe1} Lemma
3.2]\label{thm:Struwemonotonicity} Let $f_t \in C^{\infty} (\bRm
\times [0,T), N)$ be a solution to \eqref{eq:HMHF} with uniformly bounded energy
density.  Then
\begin{equation*}
\frac{d}{dt} \brk{ \mathcal{F} \prs{ f_t, t_0 - t, \tfrac{\brs{x-x_0}^2}{4(t_0 -
t)}}} = -(t_0 -t) \int_{\bRm} \brs{ \tau_t - \prs{ \frac{x-x_0}{2 (t_0 - t)}
\hook Tf_t }}^2 \frac{e^{ - \frac{ \brs{x-x_0}^2}{4 (t_0 - t)}}}{(4 \pi (t_0 -
t))^{m/2}}  dV.
\end{equation*}
\end{lemma}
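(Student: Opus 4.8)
The plan is to verify this as a direct computation, applying the general first-variation formula from Proposition \ref{prop:1stvarM} to the specific one-parameter family given by the Hamilton-Struwe weighting. Concretely, I would set $\upsilon_t = t_0 - t$ and $\theta_t = \frac{|x-x_0|^2}{4(t_0-t)}$, so that $\Theta_t = (4\pi\upsilon_t)^{-m/2}e^{-\theta_t}$ is precisely the backwards heat kernel appearing in the statement, and I would let $f_t$ evolve by \eqref{eq:HMHF}. The key observation is that this choice of $(\upsilon_t, \theta_t)$ is exactly the pair to which Lemma \ref{lem:Thetahteq} applies: one checks that $\partial_t \upsilon_t = -1$, matching \eqref{eq:LF}, and that $\partial_t \theta_t = |\N\theta_t|^2 - \lap\theta_t + \frac{m}{2\upsilon_t}$, matching \eqref{eq:TF} (this is the standard identity $\partial_t\bigl(\tfrac{|x-x_0|^2}{4(t_0-t)}\bigr) = \tfrac{|x-x_0|^2}{4(t_0-t)^2}$ together with $|\N\theta_t|^2 = \tfrac{|x-x_0|^2}{4(t_0-t)^2}$ and $\lap\theta_t = \tfrac{m}{2(t_0-t)}$ on $\bRm$). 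Hence by Lemma \ref{lem:Thetahteq}, $\Theta_t$ solves the backwards heat equation \eqref{eq:BHF}, i.e. $\dot\Theta_t = -\lap\Theta_t$.

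Next I would substitute into \eqref{eq:1stvarM}, now with the variation parameter $s$ replaced by the time $t$ and with $\dot f_t = \tau_t$ (since $f_t$ flows by \eqref{eq:HMHF}). This gives
\begin{align*}
\frac{d}{dt}\brk{\FF(f_t,\upsilon_t,\theta_t)} = \frac{1}{2}\int_{\bRm}\prs{\dot\upsilon_t + \upsilon_t \tfrac{\dot\Theta_t}{\Theta_t}}|Tf_t|^2\Theta_t \, dV - \upsilon_t \int_{\bRm}\ip{\tau_t, \mathsf{S}_{\Theta_t}(f_t)}\Theta_t\, dV.
\end{align*}
The second integral is already in the desired shape: $\mathsf{S}_{\Theta_t}(f_t) = \tau_t + \bigl(\tfrac{\N\Theta_t}{\Theta_t}\hook Tf_t\bigr)$, and since $\tfrac{\N\Theta_t}{\Theta_t} = -\N\theta_t = -\tfrac{x-x_0}{2(t_0-t)}$, we get $-\upsilon_t\int \ip{\tau_t,\, \tau_t - (\tfrac{x-x_0}{2(t_0-t)}\hook Tf_t)}\Theta_t\,dV$, which up to the cross term is the claimed $-(t_0-t)\int|\tau_t - (\cdots\hook Tf_t)|^2\Theta_t\,dV$. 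So the remaining work is to show that the first integral in the display above exactly supplies the missing squared-norm pieces, namely $+\upsilon_t\int\bigl|\tfrac{x-x_0}{2(t_0-t)}\hook Tf_t\bigr|^2\Theta_t\,dV$ plus a cross term cancelling the one above. To see this, I would use $\dot\Theta_t = -\lap\Theta_t$ to rewrite $\tfrac12\int(\dot\upsilon_t + \upsilon_t\tfrac{\dot\Theta_t}{\Theta_t})|Tf_t|^2\Theta_t\,dV = \tfrac12\int(-1)|Tf_t|^2\Theta_t\,dV - \tfrac{\upsilon_t}{2}\int (\lap\Theta_t)|Tf_t|^2\,dV$, then integrate by parts twice on the $\lap\Theta_t$ term to move the Laplacian onto $|Tf_t|^2$, and use the Bochner-type identity $\tfrac12\lap|Tf_t|^2 = |\N Tf_t|^2 + \ip{\N\tau_t, Tf_t} + (\text{curvature})$ together with $\tr_g\N Tf_t = \tau_t$; this is precisely the content of the classical Struwe monotonicity computation, and standard manipulations recombine everything into $-(t_0-t)\int|\tau_t - (\tfrac{x-x_0}{2(t_0-t)}\hook Tf_t)|^2\Theta_t\,dV$.

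Rather than redoing this well-known identity from scratch, the cleanest route is to cite it: the lemma is stated as ``\cite{Struwe1} Lemma 3.2,'' so the proof is simply to observe that the quantity $\FF(f_t, t_0-t, \tfrac{|x-x_0|^2}{4(t_0-t)})$ coincides with Struwe's monotone quantity (his $\tfrac{d}{dt}$ of the weighted energy with the backwards heat kernel) and invoke his computation. I would present it that way — identify the weight with the Euclidean backwards heat kernel via Lemma \ref{lem:Thetahteq}, note that $\FF$ with these parameters is Struwe's functional, and quote the formula — with at most a remark that it also follows by specializing Proposition \ref{prop:1stvarM} as above. The main (mild) obstacle is bookkeeping: making sure the cross terms from expanding the square $|\tau_t - (\tfrac{x-x_0}{2(t_0-t)}\hook Tf_t)|^2$ match the cross terms produced by the integration by parts, and confirming that the ``uniformly bounded energy density'' hypothesis is exactly what licenses the integrations by parts and the finiteness required to apply Proposition \ref{prop:1stvarM}; there is no conceptual difficulty beyond that.
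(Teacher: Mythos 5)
The paper offers no proof of this lemma at all --- it is quoted directly from Struwe (\cite{Struwe1}, Lemma 3.2) --- so your ultimate recommendation, namely to identify $\FF\prs{f_t, t_0-t, \tfrac{|x-x_0|^2}{4(t_0-t)}}$ with Struwe's weighted energy and cite his computation, is exactly what the paper does. Your supporting observations are also correct: $(\upsilon_t,\theta_t) = \prs{t_0-t, \tfrac{|x-x_0|^2}{4(t_0-t)}}$ does solve \eqref{eq:LF} and \eqref{eq:TF}, so Lemma \ref{lem:Thetahteq} identifies $\Theta_t$ with the Euclidean backwards heat kernel, and the derivation-from-first-principles route you sketch is the one the paper gestures at when it presents Corollary \ref{lem:BHFFmon} as the generalization obtained from Proposition \ref{prop:1stvarM}. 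In fact the quickest rigorous route inside the paper's own framework is to specialize \eqref{eq:BHFFmon}: for the Euclidean backwards heat kernel the Hessian of $-\log\Theta_t$ equals $\tfrac{g}{2\upsilon_t}$ exactly, so the entire second line of \eqref{eq:BHFFmon} vanishes and the stated identity drops out.

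One caveat on the middle of your sketch: the step ``integrate by parts twice to move the Laplacian onto $|Tf_t|^2$ and use the Bochner-type identity'' is not the right manipulation. Expanding $\lap|Tf|^2$ produces $|\N Tf|^2$ and curvature terms which have no counterpart in the target formula and do not cancel against anything else. What actually closes the computation is a Pohozaev/stress-energy identity: integrate by parts only once, so that $\N\Theta_t = -\tfrac{x-x_0}{2(t_0-t)}\Theta_t$ appears; then use the symmetry $\N_\alpha Tf_\beta = \N_\beta Tf_\alpha$ and integrate by parts again in the \emph{other} index. Writing $V = \tfrac{x-x_0}{2(t_0-t)}$, this yields
\begin{equation*}
\frac{1}{2}\int_{\bRm}\prs{\dot\upsilon_t + \upsilon_t\tfrac{\dot\Theta_t}{\Theta_t}}\brs{Tf_t}^2\Theta_t\, dV
= \upsilon_t\int_{\bRm}\ip{\tau_t, V\hook Tf_t}\Theta_t\, dV - \upsilon_t\int_{\bRm}\brs{V\hook Tf_t}^2\Theta_t\, dV,
\end{equation*}
which is precisely the cross term and square needed to upgrade $-\upsilon_t\int\ip{\tau_t,\mathsf{S}_{\Theta_t}(f_t)}\Theta_t\,dV$ to $-\upsilon_t\int\brs{\mathsf{S}_{\Theta_t}(f_t)}^2\Theta_t\,dV$. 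This integration-by-parts pattern is exactly what the paper records as \eqref{eq:primids2} in the proof of Lemma \ref{lem:primid} (there in the time-independent soliton setting); your bounded-energy-density hypothesis is indeed what justifies discarding the boundary terms against the Gaussian. So: the citation route matches the paper, and the computational route works, but replace the Bochner step with the stress-energy identity.
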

\noindent Using this and the first variation (cf. Corollary \ref{prop:1stvar}),
Zhang
proves the following entropy monotonicity.
\begin{prop}[\cite{Zhang} Proposition 4]\label{prop:Ftmonotonicity} Let $f_t \in
C^{\infty}(\mathbb{R}^m \times [0,T), N)$ be a solution to \eqref{eq:HMHF} with
uniformly bounded energy density. Then the quantity
\begin{equation*}
\la(f_t)  := \sup_{t \in \mathbb{R}, x \in \mathbb{R}^m} \mathcal{F}\prs{
f_t,t_0 - t, \tfrac{ \brs{x-x_0}^2}{ 4 (t_0 -t)} },
\end{equation*}
is non increasing in $t$.
\end{prop}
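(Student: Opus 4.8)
The plan is to derive the entropy monotonicity from the pointwise $\FF$-monotonicity of Lemma~\ref{thm:Struwemonotonicity} together with the first variation formula, by reducing the supremum over all basepoints $(x_0,t_0)$ to the behavior of individual Gaussian-weighted $\FF$-functionals along the flow. The key observation is that $\la(f_t)$ as defined here is a supremum of quantities $\FF(f_t, t_0-t, |x-x_0|^2/(4(t_0-t)))$, each of which, by Lemma~\ref{thm:Struwemonotonicity}, is non-increasing in $t$ on the interval $[0,t_0)$ (the right-hand side is manifestly $\leq 0$, being $-(t_0-t)$ times a nonnegative integrand). So the entropy is a supremum of a family of non-increasing functions.

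The first step is to fix times $t_1 < t_2$ in $[0,T)$ and show $\la(f_{t_2}) \leq \la(f_{t_1})$. Given any $\ve > 0$, choose a basepoint $(x_0, t_0)$ with $t_0 > t_2$ realizing $\FF(f_{t_2}, t_0 - t_2, |x-x_0|^2/(4(t_0-t_2))) \geq \la(f_{t_2}) - \ve$. The point here is that the supremum defining $\la(f_{t_2})$ ranges over all $x_0 \in \bRm$ and all $t_0$, and by translating $t_0$ we may assume $t_0 > t_2 > t_1$, so that the flow is smooth on $[t_1, t_2] \subset [0, t_0)$ and Lemma~\ref{thm:Struwemonotonicity} applies on this interval. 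Then
\begin{align*}
\la(f_{t_2}) - \ve &\leq \FF\!\left(f_{t_2}, t_0 - t_2, \tfrac{|x-x_0|^2}{4(t_0 - t_2)}\right) \\
&\leq \FF\!\left(f_{t_1}, t_0 - t_1, \tfrac{|x-x_0|^2}{4(t_0 - t_1)}\right) \\
&\leq \la(f_{t_1}),
\end{align*}
where the middle inequality is the integrated form of Lemma~\ref{thm:Struwemonotonicity} over $[t_1,t_2]$ and the last is by definition of $\la(f_{t_1})$ as a supremum (with basepoint $(x_0,t_0)$, which is admissible since $t_0 > t_1$). Letting $\ve \to 0$ gives the claim.

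The main technical obstacle is justifying that the integrated monotonicity from Lemma~\ref{thm:Struwemonotonicity} is actually valid — i.e., that the derivative identity integrates to the corresponding inequality between endpoint values. This requires that the relevant $\FF$-functional is finite and that the integrand on the right is integrable over $[t_1,t_2]$; the hypothesis of uniformly bounded energy density is what guarantees this (it controls the Gaussian-weighted integrals uniformly, since $\int_{\bRm} (4\pi s)^{-m/2} e^{-|x-x_0|^2/(4s)} dV = 1$). One should remark that the supremum in $\la(f_t)$ is finite for each $t$ under the bounded-energy-density assumption, so the inequalities above are between finite quantities. I would also note that the case $t_0 \le t_2$ need not be considered separately: since $t_0$ ranges over all of $\mathbb{R}$ in the supremum, restricting attention to $t_0$ large is harmless, and indeed for the supremum it suffices to consider $t_0$ just slightly larger than $t_2$, making the interval $[t_1, t_2]$ lie strictly inside $[0, t_0)$ where the flow is smooth. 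A brief word confirming that $\la$ is independent of the particular large $t_0$ used (by the scaling properties of the Gaussian) completes the argument.
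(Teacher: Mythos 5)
Your argument is correct and is exactly the approach the paper intends (it cites Zhang for this proposition and proves the generalized Corollary \ref{cor:entropymon} by the same reasoning, merely phrased as a contrapositive): since each basepoint $(x_0,t_0)$ with $t_0>t$ gives a quantity that is non-increasing in $t$ by Lemma \ref{thm:Struwemonotonicity}, the supremum over basepoints is non-increasing. One minor caveat: your side claim that bounded energy density makes $\la(f_t)$ finite is not justified (the Gaussian-weighted energy is only bounded by $(t_0-t)\sup e(f_t)$, which is unbounded as $t_0\to\infty$), but this does not affect the conclusion, since a supremum of non-increasing functions is non-increasing whether or not it is finite; one just replaces ``$\la(f_{t_2})-\ve$'' by an arbitrary level $K$ when the supremum is infinite.
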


One may utilize Proposition \ref{prop:1stvarM} to give a generalization of Lemma
\ref{thm:Struwemonotonicity}, in the case of general source manifolds $M$ rather
than $\mathbb{R}^m$. This is proven by Hamilton in \cite{Ham2}.
\begin{cor}[\cite{Ham2} Theorem 1.1, pp.131]\label{lem:BHFFmon}
 Let $f_t \in C^{\infty} (\bRm \times [0,T), N)$ be a solution to
\eqref{eq:HMHF} with uniformly bounded energy, and let $\upsilon_t$ and
$\theta_t$ satisfy \eqref{eq:LF} and \eqref{eq:TF} respectively. Set $\Theta_t
:= e^{- \theta_t}(4 \pi \upsilon_t)^{-m/2}$.  Then
\begin{align}\label{eq:BHFFmon}
\begin{split}
\tfrac{d}{dt} \brk{ \mathcal{F}(f_t,\upsilon_t, \theta_t) } 
=&\ - \upsilon_t \int_{M} \brs{ \tfrac{\prs{\N \Theta_t} \hook Tf_t }{\Theta_t}
+ \tau_t }^2 \Theta_t  dV_g  \\
&\ + \upsilon_t \int_M{ g^{\ga \gb} g^{\gz \gd} \ip{ \prs{Tf_t}_{\ga},
\prs{Tf_t}_{\gd}} \prs{ \tfrac{\prs{\N_{\gz} \Theta_t} \prs{\N_{\gb} \Theta_t} 
- \N_{\gz} \N_{\gb} \Theta_t }{\Theta_t} - \prs{\tfrac{g_{\gz\gb} }{2
\upsilon_t}}} } \Theta_t  dV_g. 
\end{split}
\end{align}
\end{cor}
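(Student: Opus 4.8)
The plan is to derive \eqref{eq:BHFFmon} by combining the first variation formula of Proposition \ref{prop:1stvarM} with the evolution equations \eqref{eq:LF} and \eqref{eq:TF}, much as Struwe's monotonicity in Lemma \ref{thm:Struwemonotonicity} follows from the special choice of Gaussian weight. The point is that moving along harmonic map heat flow in $t$ can be treated as the one-parameter family $s \mapsto (f_s, \upsilon_s, \theta_s)$ in Proposition \ref{prop:1stvarM}, with $\dot f_t = \tau_t$, $\dot\upsilon_t = -1$, and $\dot\theta_t$ given by \eqref{eq:TF}. So the first step is to specialize \eqref{eq:1stvarM} to this setting: the second term becomes $-\upsilon_t \int_M \ip{\tau_t, \mathsf{S}_{\Theta_t}(f_t)}\Theta_t\, dV_g$, which we want to convert into a full square $-\upsilon_t\int_M |\mathsf{S}_{\Theta_t}(f_t)|^2 \Theta_t\, dV_g$ plus an error, by writing $\mathsf{S}_{\Theta_t}(f_t) = \tau_t + (\tfrac{\N\Theta_t}{\Theta_t}\hook Tf_t)$ and expanding $\ip{\tau_t, \mathsf{S}_{\Theta_t}}= |\mathsf{S}_{\Theta_t}|^2 - \ip{\tfrac{\N\Theta_t}{\Theta_t}\hook Tf_t, \mathsf{S}_{\Theta_t}}$.

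Next I would handle the remaining terms. The first term of \eqref{eq:1stvarM}, $\tfrac12\int_M(\dot\upsilon_t + \upsilon_t \tfrac{\dot\Theta_t}{\Theta_t})|Tf_t|^2\Theta_t\,dV_g$, together with the leftover cross term $\upsilon_t\int_M \ip{\tfrac{\N\Theta_t}{\Theta_t}\hook Tf_t,\, \tfrac{\N\Theta_t}{\Theta_t}\hook Tf_t + \tau_t}\Theta_t\,dV_g$, needs to be massaged into the curvature-type term on the second line of \eqref{eq:BHFFmon}. Here I would integrate by parts on the piece involving $\ip{\tfrac{\N\Theta_t}{\Theta_t}\hook Tf_t,\tau_t}$, i.e. on $\int_M (\N_\gz\Theta_t)\,g^{\gz\gd}\ip{(Tf_t)_\gd, \N_\ga (Tf_t)_\ga}\,dV_g$, moving the $\N_\ga$ off of $Tf_t$; this produces a Hessian term $\N_\gz\N_\ga\Theta_t$ and a term $(\N_\ga\Theta_t)(\N_\gz\Theta_t)$ when it hits the other $\Theta_t$-less factor, matching the structure $(\N_\gz\Theta_t)(\N_\gb\Theta_t) - \N_\gz\N_\gb\Theta_t$. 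The $-\tfrac{g_{\gz\gb}}{2\upsilon_t}$ piece should come precisely from the $\dot\upsilon_t = -1$ contribution combined with using \eqref{eq:TF}, since $\dot\Theta_t/\Theta_t$ computed from \eqref{eq:LF}–\eqref{eq:TF} is exactly $|\N\theta_t|^2 - \lap\theta_t - \tfrac{m}{2\upsilon_t} = \N_\ga\N_\ga\Theta_t/\Theta_t$ rearranged (this is essentially the content of Lemma \ref{lem:Thetahteq}), so the $|Tf_t|^2$ coefficient cleanly splits into a trace of the Hessian term minus the $\tfrac{m}{2\upsilon_t}$ correction, which redistributes as $\sum_\gb \tfrac{g_{\gz\gb}}{2\upsilon_t}$.

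The main obstacle I expect is the bookkeeping in the integration by parts: one must be careful that the weight $\Theta_t$ is carried correctly (since $\N\Theta_t$ appears explicitly, integrating by parts against $dV_g$ rather than $\Theta_t\,dV_g$ is the natural move, but then factors of $\Theta_t$ versus $\N\Theta_t$ must be tracked), and that the $R^N$ curvature term from Lemma \ref{lem:vartau} does \emph{not} appear here — indeed since we only take a first derivative in $t$ (not a second), no $R^N$ term enters, unlike in Proposition \ref{prop:2ndvarM}. A secondary point is justifying all integrations by parts, which the hypothesis of uniformly bounded energy density together with the Gaussian-type decay of $\Theta_t$ handles, exactly as in \cite{Struwe1, Ham2}; I would simply invoke that this regularity is enough, referring to Hamilton's original argument in \cite{Ham2}. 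Finally I would collect terms and observe that the result is precisely \eqref{eq:BHFFmon}, noting that it reduces to Lemma \ref{thm:Struwemonotonicity} when $M = \bRm$, $\upsilon_t = t_0 - t$, and $\theta_t = \tfrac{|x-x_0|^2}{4(t_0-t)}$, since then the Hessian of $\Theta_t$ exactly cancels the $\tfrac{g_{\gz\gb}}{2\upsilon_t}$ term.
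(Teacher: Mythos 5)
Your proposal is correct and follows exactly the route the paper itself indicates (the paper defers the proof to Hamilton but explicitly says it follows from Proposition \ref{prop:1stvarM}): substitute $\dot f_t = \tau_t$, $\dot\upsilon_t=-1$, $\dot\Theta_t = -\lap\Theta_t$ into \eqref{eq:1stvarM}, complete the square in $\mathsf{S}_{\Theta_t}$, and integrate the cross term $\ip{\tfrac{\N\Theta_t}{\Theta_t}\hook Tf_t,\tau_t}$ by parts using the symmetry of $\N Tf$, which produces precisely the Hessian and gradient-squared terms while the $\dot\upsilon_t=-1$ contribution yields the $-\tfrac{g_{\gz\gb}}{2\upsilon_t}$ piece. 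The only caveats are cosmetic: the gradient-product term your derivation produces carries $\Theta_t^2$ in the denominator (matching the vanishing of the bracket for the Gaussian, as it must to recover Lemma \ref{thm:Struwemonotonicity}), and it is the full expression $\N_\gz\N_\gb\log\Theta_t + \tfrac{g_{\gz\gb}}{2\upsilon_t}$, not the Hessian of $\Theta_t$ alone, that vanishes in that special case.
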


We next state the resultant monotonicity formula, given in Hamilton's paper
\cite{Ham2}, which is a result of applying the Harnack estimates from
\cite{Ham1} to control the last term of \eqref{eq:BHFFmon}.

\begin{thm}[\cite{Ham2} Theorem A, pp.123] Let $f_t \in C^{\infty}(M \times
[0,T),N)$ solve
\eqref{eq:HMHF} on $0 \leq t < T$ and $\upsilon_t$ and $\theta_t$ satisfy
\eqref{eq:LF} and \eqref{eq:TF} respectively. Set $\Theta_t := e^{- \theta_t}(4
\pi \upsilon_t)^{-m/2}$. Then if $\int_M \Theta_t = 1$, the quantity
$\mathcal{F}(f_t,\upsilon_t, \theta_t)$ is monotone decreasing in $t$ when $M$
is Ricci parallel with weakly positive sectional curvatures. On a general
manifold $M$ we have
\begin{equation*}
\mathcal{F}(f_t,\upsilon_t, \Theta_t) \leq C_M
\prs{\mathcal{F}(f_{\tau},\upsilon_{\tau}, \Theta_{\tau})  + (t- \tau)
\int_M\brs{T f_0}^2 dV_g},
\end{equation*} 
for $T-1 \leq \tau \leq t \leq T$, and $C_M \in \mathbb{R}_{>0}$ a constant
depending only on $M$.
\end{thm}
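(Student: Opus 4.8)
The plan is to obtain this as a consequence of Corollary \ref{lem:BHFFmon} together with Hamilton's matrix Li--Yau--Hamilton estimate \cite{Ham1}. The first observation is that, since $\upsilon_t$ solves \eqref{eq:LF}, reparametrizing by the forward time $s=\upsilon_t$ turns the backwards heat equation $\del_t\Theta_t=-\lap\Theta_t$ into the honest forward heat equation $\del_s\Theta_s=\lap\Theta_s$; thus $\Theta_t$ is, up to time reversal, a positive solution of the heat equation on $M$. In \eqref{eq:BHFFmon} the first term on the right-hand side is manifestly nonpositive, so everything reduces to controlling the symmetric tensor
\begin{equation*}
Q_{\gz\gb} := -\N_{\gz}\N_{\gb}\log\Theta_t - \tfrac{g_{\gz\gb}}{2\upsilon_t},
\end{equation*}
which is exactly what is paired (against the positive weight $\Theta_t$) with the positive semidefinite tensor $g^{\ga\gb}g^{\gz\gd}\ip{\prs{Tf_t}_{\ga},\prs{Tf_t}_{\gd}}$ built from the pulled-back metric in the second term of \eqref{eq:BHFFmon}. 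I would also record at the outset that the constraint $\int_M\Theta_t\,dV_g=1$ is preserved in $t$: by Lemma \ref{lem:Thetahteq} $\Theta_t$ solves \eqref{eq:BHF}, and integrating the heat equation over the closed manifold $M$ conserves total mass.

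For the curvature-restricted statement I would simply invoke the matrix Li--Yau--Hamilton inequality of \cite{Ham1}: when $M$ is Ricci parallel with weakly positive sectional curvature, a positive solution $\Theta_t$ of the heat equation satisfies $\N_{\gz}\N_{\gb}\log\Theta_t+\tfrac{1}{2s}g_{\gz\gb}\geq 0$ in the forward time $s=\upsilon_t$, i.e. $Q_{\gz\gb}\leq 0$ as a bilinear form. Contracting the nonpositive tensor $Q$ with the positive semidefinite $g^{\ga\gb}g^{\gz\gd}\ip{\prs{Tf_t}_{\ga},\prs{Tf_t}_{\gd}}$ and the positive weight $\Theta_t$ makes the second term of \eqref{eq:BHFFmon} nonpositive as well, so $\tfrac{d}{dt}\mathcal{F}(f_t,\upsilon_t,\theta_t)\leq 0$, which is the claimed monotonicity.

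For a general compact $M$ the estimate of \cite{Ham1} holds only up to curvature error terms. Here the plan is: restrict to the bounded window $T-1\leq\tau\leq t<T$, use compactness of $M$ to bound its sectional curvature, the covariant derivatives of its curvature, and $\inj(M)$, and thereby produce a pointwise bound $Q_{\gz\gb}\leq C(M)g_{\gz\gb}$. Substituting into \eqref{eq:BHFFmon} bounds the second term by $C(M)\upsilon_t\int_M\brs{Tf_t}^2\Theta_t\,dV_g$; one part of this is a multiple of the functional itself via the identity $\upsilon_t\int_M\brs{Tf_t}^2\Theta_t\,dV_g=2\mathcal{F}(f_t,\upsilon_t,\theta_t)$, while the remaining part is dominated instead by the ambient energy $\int_M\brs{Tf_t}^2\,dV_g$, which is nonincreasing along \eqref{eq:HMHF} -- since $\tfrac{d}{dt}\mathcal{E}(f_t)=-\int_M\brs{\tau_t}^2\,dV_g\leq 0$ -- and hence is at most $\int_M\brs{Tf_0}^2\,dV_g$. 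This yields a differential inequality of the shape $\tfrac{d}{dt}\mathcal{F}(f_t,\upsilon_t,\theta_t)\leq C(M)\,\mathcal{F}(f_t,\upsilon_t,\theta_t)+C(M)\int_M\brs{Tf_0}^2\,dV_g$, and a Gr\"onwall argument on $[\tau,t]\subset[T-1,T)$, an interval of length at most $1$, with the exponential factor absorbed into a single constant $C_M$ depending only on $M$, gives the asserted estimate.

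The one genuinely hard ingredient is the matrix Li--Yau--Hamilton estimate, with its sharp curvature error terms -- proved by a tensorial maximum principle -- which I would quote from \cite{Ham1} rather than reprove. Granting it, the rest is bookkeeping: combine Corollary \ref{lem:BHFFmon}, conservation of mass for \eqref{eq:BHF}, and the energy monotonicity of harmonic map heat flow, and finish with Gr\"onwall.
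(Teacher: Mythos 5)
Your proposal follows exactly the route the paper indicates for this result (which it cites from Hamilton rather than proving): take Corollary \ref{lem:BHFFmon}, observe the first term is nonpositive, and control the second term by the matrix Li--Yau--Hamilton estimate of \cite{Ham1} after the time reversal $s=\upsilon_t$, with the curvature error terms plus Gr\"onwall handling the general compact case. The bookkeeping you supply (mass conservation for \eqref{eq:BHF}, energy monotonicity to bound the inhomogeneous term by $\int_M\brs{Tf_0}^2\,dV_g$, and the unit-length time window) is correct and consistent with the statement.
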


\begin{cor}\label{cor:entropymon} Let $f_t \in
C^{\infty}(M \times [0,T),N)$ be a solution to \eqref{eq:HMHF} with uniformly
bounded energy for $t \in [0,T)$. Then we have that
\begin{equation}\label{eq:entropymon}
\lambda (f_t) \leq C_M \prs{ \lambda( f_{\tau}) + (t-\tau) \int_M\brs{T f_0}^2
dV_g }.
\end{equation}
for $T-1 \leq \tau \leq t \leq T$, and $C_M \in \mathbb{R}_{>0}$ a constant
depending only on $M$. Moreover, if $M$ is Ricci parallel with weakly
positive sectional curvatures, then
$\lambda (f_t)$ is monotone. 

\begin{proof}
First assume $M$ is Ricci parallel with weakly positive sectional curvatures.
Suppose to the contrary that
there exists some solution to harmonic map heat flow, $f_t$ such that
$\lambda(f_{t_0}) < \lambda(f_{t_1})$ for $t_0 < t_1$. Then we have that there
exists some choice of $\upsilon_{t_1}$ and $\theta_{t_1}$ such that
\begin{equation*}
\lambda(f_{t_0}) < \mathcal{F}( f_{t_1}, \upsilon_{t_1}, \theta_{t_1}).
\end{equation*}
If we let $\upsilon_t, \theta_t$ as above satisfying \eqref{eq:LF} and
\eqref{eq:TF} with the appropriate final values, then by Corollary
\ref{lem:BHFFmon} we have that $\tfrac{d}{dt} \brk{ \mathcal{F}(
f_t,\upsilon_t, \theta_t )} \leq 0$, so we have that 
\begin{align*}
\lambda(f_{t_0}) \geq \mathcal{F}( f_{t_0}, \upsilon_{t_0}, \theta_{t_0}) \geq
\mathcal{F}( f_{t_1}, \upsilon_{t_1}, \theta_{t_1}) > \lambda(f_{t_0}).
\end{align*}
This is clearly a contradiction; the first result follows.

More generally assume that equality \eqref{eq:entropymon} is violated for some
times $t_0 < t_1$. Then we have that
\begin{equation*}
\lambda(f_{t_1}) > C_M \prs{ \lambda(f_{t_0}) + (t_1 - t_0) \int_M \brs{Tf_0}^2
dV_g}.
\end{equation*}
Then one can choose $\theta_t$, $\upsilon_t$ such that
\begin{equation*}
\mathcal{F}(f_{t_1}, \upsilon_{t_1}, \theta_{t_1}) > C_M \prs{ \lambda(f_{t_0})
+ (t_1 - t_0) \int_M \brs{Tf_0}^2 dV_g},
\end{equation*}
and moreover using Corollary \ref{lem:BHFFmon} we have that
\begin{align*}
\lambda(f_{t_0}) 
& \geq \tfrac{1}{C_M} \mathcal{F}(f_{t_1},\upsilon_{t_1},\theta_{t_1}) - (t_1 -
t_0) \int_M \brs{T f_0}^2 dV_g \\
& > \tfrac{1}{C_M} \prs{C_M  \lambda(f_{t_0},\upsilon_{t_0},\theta_{t_0} + C_M
(t_1 - t_0) \int_M \brs{Tf}^2 dV_g}  - (t_1 - t_0) \int_M \brs{T f_0}^2 dV_g \\
& = \lambda(f_{t_0},\upsilon_{t_0},\theta_{t_0}).
\end{align*}
This is a contradiction, so \eqref{eq:entropymon} follows.
\end{proof}
\end{cor}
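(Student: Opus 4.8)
The plan is to leverage the description of $\la$ as a supremum of the smooth functional $\FF$ over normalized weights, together with the (approximate) monotonicity of $\FF$ along the coupled system \eqref{eq:HMHF}, \eqref{eq:LF}, \eqref{eq:TF} recorded just above; the argument is by contradiction in both cases. In the case where $M$ is Ricci parallel with weakly positive sectional curvature, suppose $\la(f_{t_0}) < \la(f_{t_1})$ for some $t_0 < t_1$. Since $\la(f_{t_1})$ is a supremum exceeding $\la(f_{t_0})$, there are $\upsilon_{t_1} > 0$ and $\theta_{t_1} \in C^\infty(M)$ with $\int_M (4\pi\upsilon_{t_1})^{-m/2}e^{-\theta_{t_1}}\,dV_g = 1$ and $\la(f_{t_0}) < \FF(f_{t_1},\upsilon_{t_1},\theta_{t_1})$.

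Next I would transport this weight backward in time to $t_0$. Solving \eqref{eq:LF} backward gives $\upsilon_t = \upsilon_{t_1} + (t_1 - t) > 0$ on $[t_0,t_1]$, and the cleanest way to produce $\theta_t$ is to solve the linear flow \eqref{eq:BHF} for $\Theta_t$ with final data $\Theta_{t_1} = (4\pi\upsilon_{t_1})^{-m/2}e^{-\theta_{t_1}}$: reversing time this is the ordinary heat equation on the compact manifold $M$, hence uniquely solvable with smooth positive solution, and since $\tfrac{d}{dt}\int_M \Theta_t\,dV_g = -\int_M \lap\Theta_t\,dV_g = 0$ the normalization $\int_M\Theta_t\,dV_g = 1$ persists. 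Setting $\theta_t := -\log\big((4\pi\upsilon_t)^{m/2}\Theta_t\big)$ and running the computation of Lemma \ref{lem:Thetahteq} in reverse shows $\theta_t$ solves \eqref{eq:TF}, so $(\upsilon_t,\theta_t)$ is a path of exactly the type handled by Corollary \ref{lem:BHFFmon} and the theorem following it.

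In the Ricci-parallel case that theorem yields $\tfrac{d}{dt}\FF(f_t,\upsilon_t,\theta_t)\le 0$, hence $\FF(f_{t_0},\upsilon_{t_0},\theta_{t_0}) \ge \FF(f_{t_1},\upsilon_{t_1},\theta_{t_1})$; since $(\upsilon_{t_0},\theta_{t_0})$ is an admissible normalized weight at $t_0$, this gives $\la(f_{t_0}) \ge \FF(f_{t_0},\upsilon_{t_0},\theta_{t_0}) \ge \FF(f_{t_1},\upsilon_{t_1},\theta_{t_1}) > \la(f_{t_0})$, a contradiction. For a general $M$ I would run the same scheme but assume \eqref{eq:entropymon} fails for some $T-1 \le t_0 < t_1 \le T$, pick a normalized weight with $\FF(f_{t_1},\upsilon_{t_1},\theta_{t_1}) > C_M\big(\la(f_{t_0}) + (t_1-t_0)\int_M |Tf_0|^2\,dV_g\big)$, flow it backward as above, and invoke Hamilton's Theorem A estimate $\FF(f_{t_1},\upsilon_{t_1},\theta_{t_1}) \le C_M\big(\FF(f_{t_0},\upsilon_{t_0},\theta_{t_0}) + (t_1-t_0)\int_M |Tf_0|^2\,dV_g\big)$ together with admissibility of $(\upsilon_{t_0},\theta_{t_0})$ at $t_0$ to reach a contradiction; the restriction $T-1\le\tau\le t\le T$ is inherited directly from that theorem.

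The main obstacle is the passage from $\la$ back to $\FF$: from an almost-maximizing weight at the later time $t_1$ one must manufacture a genuine admissible weight at $t_0$ that is comparable under $\FF$. This rests on two facts that I would make sure to state carefully: the flow \eqref{eq:BHF} is well-posed on compact $M$ when run backward in time (it is the heat equation with time reversed), and the normalization $\int_M \Theta_t\,dV_g = 1$ is conserved along it. Once these are in place, the remaining work is bookkeeping with the monotonicity formulas already established.
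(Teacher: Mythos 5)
Your proposal is correct and follows essentially the same route as the paper: a contradiction argument in both cases, choosing an almost-maximizing normalized weight at the later time, transporting it backward via \eqref{eq:LF}, \eqref{eq:TF} (equivalently \eqref{eq:BHF}), and applying the monotonicity/estimate for $\FF$ from Corollary \ref{lem:BHFFmon} and Hamilton's theorem. Your added care about backward well-posedness of \eqref{eq:BHF} on compact $M$ and conservation of the normalization $\int_M \Theta_t\, dV_g = 1$ fills in details the paper leaves implicit.
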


\section{Entropy for Euclidean source}\label{s:euclrestr}

In the previous section we established a general notion of entropy for maps
between Riemannian manifolds, and its relationship to harmonic map flow.  As
exhibited in \cite{Grayson}, type I singularities to harmonic map flow admit
blowup
solutions which are shrinking solitons, mapping from flat Euclidean space to the
given target manifold.  The rest of this paper focuses primarily on the study of
shrinking solitons, and so we now restrict to the setting where
$(M,g)=(\mathbb{R}^m, g_{\Euc})$, where $g_{\Euc}$ denotes the standard flat
metric.  Also
henceforth $R$ will represent the curvature tensor on $N$.

In this section we review some fundamental properties of solitons and their
relationship to the $\FF$-functionals.  Many of these results appeared in
\cite{Zhang}.  We are forced to revisit some of these calculations because the
proof of Theorem \ref{thm:main1} requires us to consider variations of the
$\FF$ functionals for general basepoints.

\subsection{\texorpdfstring{$\FF$}{F}-functional revisited}

To begin we define recall a simpler version of the $\mathcal{F}$-functional with
designated fixed base point, defined in \cite{Zhang} in analogy with \cite{CM}. 
For $x_0 \in \bRm$ and $t_0 >0$, define the $\mathcal{F}_{x_0,t_0}$-functional
by
\begin{align*}
\mathcal{F}_{x_0,t_0} 
&: C^{\infty}(\mathbb{R}^m,N)  \to \mathbb{R},\\
\FF_{x_0,t_0}&(f) = \frac{t_0}{2} \int_{M} | T f |^2 \frac{e^{-
\frac{\brs{x-x_0}^2}{4 t_0}}}{(4 \pi t_0)^{m/2}} dV.
\end{align*}
To connect to the notation of \S \ref{s:Ffunctentropy}, observe that
$\mathcal{F}_{x_0,t_0}(f) := \mathcal{F} \prs{ f, t_0, \tfrac{
\brs{x-x_0}^2}{(4 t_0)^{m/2}} }$.  Furthermore we define the \emph{entropy
functional}
by
\begin{equation*}
\la(f) = \sup_{x_0 \in \mathbb R^m, t_0 >0 } \mathcal{F}_{x_0,t_0}(f).
\end{equation*}
As established in \cite{Zhang}, critical points of the $\FF_{x_0,t_0}$-
functional are self-similar shrinking solutions to harmonic map flow. 
\begin{defn} \label{solitondef} A solution $f_t : \mathbb{R}^m \to N$ to
\eqref{eq:HMHF} is called a \emph{$(x_0,t_0)$-self similar solution} if it
satisfies
\begin{equation*}
\tau_t =\prs{ \frac{x-x_0}{2 (t_0 - t)}} \hook Tf_t .
\end{equation*}
Due to the self similarity, it is enough to consider the $t = 0$ time slice of a
self similar solution, which captures all the information of the flow itself
(cf. Proposition \ref{prop:solitonchars}). We call the $t=0$ slice a
\emph{$(x_0,t_0)$-soliton}, given by
\begin{equation*}
\tau_0 = \prs{ \frac{x-x_0}{2 t_0}} \hook T f_0 .
\end{equation*}
The subscript will be omitted when understood. Lastly, taking
$(x_0,t_0) = (0,1)$ we define simply a \emph{soliton}, given by
\begin{equation*}
\tau = \prs{ \frac{x}{2} \hook T f} .
\end{equation*}
\end{defn}

We let $\mathfrak{S}_{x_0,t_0}$ denote the space of all $(x_0,t_0)$-solitons.
We define the \emph{$(x_0,t_0)$-soliton operator} by
\begin{align*}
\mathsf{S}_{x_0,t_0}
&: C^{\infty}(\mathbb{R}^m,N) \to T^*M \ten f^*(TN) \\
\mathsf{S}_{x_0,t_0} &(f) = \tau - \prs{ \frac{x-x_0}{2 t_0} \hook T f }.
\end{align*}
Note in particular $\mathfrak{S}_{x_0,t_0} = \ker \mathsf{S}_{x_0,t_0}$, and
furthermore that $\mathsf{S}_{x_0,t_0}$ is equivalent to $\mathsf{S}_{\Theta}$
for $\Theta = \exp \prs{\tfrac{\brs{x-x_0}^2}{4 t_0}}$. Furthermore, the set
$\mathfrak{S}_{x_0,t_0}$ is equivalent to $\mathfrak{S} := \mathfrak{S}_{0,1}$
via appropriate coordinate shifts, as demonstrated in the following lemma.
\begin{lemma} The sets $\mathfrak{S}_{x_0,t_0}$ and $\mathfrak{S}$ are in
bijective correspondence for all $x_0 \in \mathbb{R}^m$ and $t_0 >0$.
\begin{proof}
Suppose that $f \in \mathfrak{S}$. Then set
\begin{equation*}
\widetilde{f}(x) := f \prs{ \tfrac{x-x_0}{t_0^{1/4}}}.
\end{equation*}
Note that
\begin{align*}
\tau_{\widetilde{f}} (x) 
&= \N_{\ga} \del_{\ga} \brk{ f^i \prs{\tfrac{x-x_0}{t_0^{1/4}}}} \\
&= \tfrac{1}{\sqrt{t_0}} \tau_{f} \prs{ \tfrac{x-x_0}{\sqrt{t_0}}} \\
&=  \tfrac{1}{\sqrt{t_0}} Tf_{\ga} \prs{ \tfrac{(x-x_0)}{2\sqrt{t_0}} ^{\ga}} \\
&= Tf_{\ga} \prs{ \tfrac{x-x_0}{2 t_0}}^{\ga}.
\end{align*}
Thus we have defined a mapping to $\mathfrak{S}_{x_0,t_0}$.  The inverse map
sends $f \in \mathfrak{S}_{x_0,t_0}$ to $\bar{f}(x) := f(x_0 + \sqrt{t_0} x)$,
as is easily checked.
\end{proof}
\end{lemma}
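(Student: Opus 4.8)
The plan is to produce an explicit bijection by reparametrizing the Euclidean source via the affine change of variables that carries the ``standard'' basepoint $(0,1)$ to $(x_0,t_0)$, and then to check that this reparametrization intertwines the soliton equations $\tau = \tfrac{x}{2} \hook Tf$ and $\tau = \prs{\tfrac{x-x_0}{2t_0}} \hook Tf$. Since the whole point of the lemma is to let one study only $(0,1)$-solitons, it suffices to write down the map and verify it is well defined with a well-defined inverse.

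Concretely, I would set $\Phi(f)(x) := f\prs{\tfrac{x - x_0}{\sqrt{t_0}}}$ and show $\Phi$ maps $\mathfrak{S}$ into $\mathfrak{S}_{x_0,t_0}$. Writing $y := \tfrac{x-x_0}{\sqrt{t_0}}$, the chain rule (with no Christoffel symbols on the flat source intervening) gives $\prs{T\Phi(f)}_{\ga}(x) = \tfrac{1}{\sqrt{t_0}}\prs{Tf}_{\ga}(y)$ and, from the coordinate formula $\tau^i = \lap f^i + \gG^i_{jk}(f)\,\del_\ga f^j\,\del_\ga f^k$, that $\tau_{\Phi(f)}(x) = \tfrac{1}{t_0}\,\tau_f(y)$. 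Substituting the soliton identity $\tau_f(y) = \tfrac{y}{2}\hook Tf(y)$ together with $x - x_0 = \sqrt{t_0}\,y$ then yields $\prs{\tfrac{x-x_0}{2t_0}} \hook T\Phi(f) = \tfrac{1}{t_0}\prs{\tfrac{y}{2}\hook Tf(y)} = \tau_{\Phi(f)}(x)$, i.e. $\Phi(f) \in \mathfrak{S}_{x_0,t_0}$.

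Then I would define $\Psi(g)(x) := g\prs{x_0 + \sqrt{t_0}\,x}$; the same computation read in reverse shows $\Psi$ maps $\mathfrak{S}_{x_0,t_0}$ into $\mathfrak{S}$, and since $\Phi$ and $\Psi$ are composition with mutually inverse affine diffeomorphisms of $\bRm$, the identities $\Psi \circ \Phi = \mathrm{id}_{\mathfrak{S}}$ and $\Phi \circ \Psi = \mathrm{id}_{\mathfrak{S}_{x_0,t_0}}$ are immediate, which gives the bijection. I would also note that, being merely a reparametrization of the domain, $\Phi$ preserves smoothness and transports any polynomial energy density growth hypothesis across (after the obvious rescaling of coordinates), so the correspondence is genuinely one between the stated classes of maps. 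The only point that needs any care — and the natural place for a scaling slip — is the bookkeeping of powers of $t_0$: confirming that $\tau$ and the drift term $\prs{\tfrac{x-x_0}{2t_0}}\hook T(\cdot)$ both scale like $t_0^{-1}$ once the argument is changed back to $y$. Everything else is formal.
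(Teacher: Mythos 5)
Your proposal is correct and follows essentially the same route as the paper: compose with the affine diffeomorphism $x \mapsto \tfrac{x-x_0}{\sqrt{t_0}}$, track how $Tf$ and $\tau$ scale, and observe the inverse is composition with $x \mapsto x_0 + \sqrt{t_0}\,x$. Your bookkeeping of the powers of $t_0$ (argument scaled by $t_0^{1/2}$, so $Tf$ picks up $t_0^{-1/2}$ and $\tau$ picks up $t_0^{-1}$) is in fact cleaner than the paper's displayed computation, which contains a stray $t_0^{1/4}$.
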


There are a multitude of ways to characterize solitons; we state and explicitly
demonstrate some in the following proposition. We prove the proposition in a
series of lemmas, marked following each corresponding item.  Item (3) is
postponed to later in the section.

\begin{prop} \label{prop:solitonchars} A
smooth map $f : \mathbb{R}^m \to N$ is a soliton if and only if any
of the following holds.
\begin{enumerate}
\item (Lemma \ref{lem:char1}). The one-parameter family $f_t := \sqrt{-t}
f \subset C^{\infty}(\mathbb{R}^m, N)$ satisfies \eqref{eq:HMHF}. 
\item (Lemma \ref{lem:char2}) $f$ is harmonic with respect to the
conformal metric $g := e^{-
\tfrac{\brs{x}^2}{4(m-2)}} g_{\Euc}$.
\item (Corollary \ref{cor:critpoints}) $f$ is a critical point for the
$\mathcal{F}_{0,1}$-functional.
\end{enumerate}
\end{prop}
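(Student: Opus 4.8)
The plan is to verify the three characterizations separately, each via a direct computation linking the defining equation of a soliton, $\tau = \tfrac{x}{2} \hook Tf$, to the stated condition. For item (1), I would plug the ansatz $f_t := \sqrt{-t}\, f$ into \eqref{eq:HMHF}. The left side is $\del_t f_t = -\tfrac{1}{2\sqrt{-t}} f$ (interpreting this through the source rescaling: the correct ansatz is really $f_t(x) = f(x/\sqrt{-t})$, so one must be careful about whether the self-similarity is in the target or the source). Computing $\tau_t = \tr_g \N T f_t$ for the rescaled map gives a factor of $\tfrac{1}{-t}$ times $\tau_f$ evaluated at the rescaled point, while the spatial term $\tfrac{x}{2(-t)}\hook Tf_t$ also carries the chain-rule factor; matching the two sides reduces exactly to the soliton equation at $t = -1$. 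This is the same style of computation already carried out in the bijection lemma immediately preceding the proposition, so I would model it on that.

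For item (2), I would compute the harmonic map equation for $f$ with respect to $g = e^{-\vp} g_{\Euc}$ where $\vp = \tfrac{|x|^2}{4(m-2)}$. The tension field transforms under conformal change of the domain metric in dimension $m$ by the standard formula
\begin{equation*}
\tau_g(f) = e^{\vp}\prs{ \tau_{g_{\Euc}}(f) - \tfrac{m-2}{2}\, \N\vp \hook Tf }.
\end{equation*}
Since $\N\vp = \tfrac{x}{2(m-2)}$, the bracketed quantity is $\tau(f) - \tfrac{m-2}{2}\cdot\tfrac{x}{2(m-2)}\hook Tf = \tau(f) - \tfrac{x}{4}\hook Tf$.

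Hmm — this gives $\tfrac{x}{4}$, not $\tfrac{x}{2}$, so I would double-check the conformal factor: taking $\vp = \tfrac{|x|^2}{2(m-2)}$ instead yields $\N\vp = \tfrac{x}{m-2}$ and the bracket becomes $\tau(f) - \tfrac{x}{2}\hook Tf$, which vanishes precisely when $f$ is a soliton. I will use whichever normalization makes the constant come out right and state it as in the proposition; the point is that $\tau_g(f) = 0 \iff \mathsf{S}_{0,1}(f) = 0$ because $e^\vp > 0$. For item (3), which the excerpt defers, one sets the first variation of $\FF_{0,1}$ to zero: by Proposition \ref{prop:1stvarM} with $\Theta = (4\pi)^{-m/2} e^{-|x|^2/4}$ fixed (so $\dot\upsilon = \dot\Theta = 0$), the variation is $-\int \ip{\dot f, \mathsf{S}_\Theta(f)}\Theta\, dV$, which vanishes for all compactly supported $\dot f$ iff $\mathsf{S}_\Theta(f) = 0$; and $\mathsf{S}_\Theta = \tau + \tfrac{\N\Theta}{\Theta}\hook Tf = \tau - \tfrac{x}{2}\hook Tf = \mathsf{S}_{0,1}$, as already noted after the definition of the soliton operator.

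The main obstacle is bookkeeping of constants and the direction of the self-similar rescaling in item (1): one must be consistent about whether $f_t$ denotes $\sqrt{-t}\, f$ (target rescaling) or $f(\cdot/\sqrt{-t})$ (source rescaling), and the chain-rule factors $\tfrac{1}{\sqrt{-t}}$ versus $\tfrac{1}{-t}$ must be tracked precisely so that the $t=-1$ slice recovers $\tau = \tfrac{x}{2}\hook Tf$ rather than some rescaled version. None of the three steps is conceptually hard — each is a one- or two-line computation once the conformal-change and commutation formulas are in hand — but the coefficient $\tfrac{1}{4(m-2)}$ in the conformal metric is exactly the kind of thing where a stray factor of $2$ propagates, so I would verify it against the known fact that Euclidean coordinate functions on $\mathbb{R}^m$ are themselves solitons (taking $N = \mathbb{R}$, $f(x) = x^1$ has $\tau = 0$ and $\tfrac{x}{2}\hook Tf = \tfrac{x^1}{2} \neq 0$ — so actually the linear map is \emph{not} a soliton, which tells me the relevant baseline example to check against is instead the constant map or a genuinely self-shrinking harmonic map, e.g. an equator map $\mathbb{R}^m \to S^n$ after stereographic-type adjustment). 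I would pin down the constant by that consistency check before writing the final version.
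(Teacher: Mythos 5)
Your plan is sound and all three computations go through; the differences from the paper's treatment are worth recording. For item (2) your transformation formula is correct, and the factor of $2$ you stumbled on is a genuine inconsistency in the paper: the proof of Lemma \ref{lem:char2} sets $g_{\ga\gb}=e^{2\psi}\gd_{\ga\gb}$ with $\psi=-\brs{x}^2/(4(m-2))$, which is the metric $e^{-\brs{x}^2/(2(m-2))}g_{\Euc}$ rather than the $e^{-\brs{x}^2/(4(m-2))}g_{\Euc}$ displayed in the statement; your proposed normalization is the one consistent with the computation, so trust your check rather than the printed exponent. For item (1) you are likewise right that $\sqrt{-t}\,f$ must be read as the source rescaling $f_t(x)=f\prs{x/\sqrt{-t}}$ (target rescaling is meaningless for general $N$); your direct chain-rule verification gives the equivalence in both directions at once, whereas the paper obtains one direction by differentiating $f(\kappa x,\kappa^2 t)$ in $\kappa$ at $\kappa=1$ and then spends the remainder of the proof on Holmgren's uniqueness theorem --- that extra work establishes the stronger fact that the self-similar family is the \emph{unique} solution of \eqref{eq:HMHF} with the given initial slice, which the bare ``if and only if'' does not require, so your more elementary route suffices for the statement as written. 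For item (3) your argument (fix $\Theta=(4\pi)^{-m/2}e^{-\brs{x}^2/4}$, vary over compactly supported $\dot f$) shows $f$ is critical for $\FF_{0,1}$ under variations of the map alone; the paper's Corollary \ref{cor:critpoints} asserts criticality of the full triple $(f,x_0,t_0)$, and the additional content --- vanishing of the first variation in the basepoint directions --- is precisely identities (a) and (b) of Corollary \ref{cor:lem3zhang}. If item (3) is read in that stronger sense you would need to append those two identities, but they are proved independently in the paper, so nothing becomes circular.
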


\begin{lemma} \label{lem:char1} A smooth map $f : \mathbb{R}^m \to N$
is a soliton if and only if the one-parameter family
\begin{equation} \label{lem:char110}
f_t(x) = f_{ -1}\left( \tfrac{x}{\sqrt{-t}} \right), \qquad t <0,
\end{equation}
is a solution to \eqref{eq:HMHF}.
\begin{proof}
First assume that (\ref{lem:char110}) holds.  Take, for $\kappa \in
\mathbb{R}_{>0}$,
\begin{equation*}
f(x,t) := f\prs{ \kappa x, \kappa^2 t}.
\end{equation*}
Differentiating with respect to $\kappa$ and evaluating at $\kappa = 1$ yields
\begin{align*}
0
&=\tfrac{\del}{\del \kappa} \prs{ \left. f_{\kappa} - f }\right|_{\kappa = 1} \\
&=\left. x^{\ga} \del_{\ga} f_{\kappa} + 2t \kappa \del_t f_{\kappa}
\right|_{\kappa = 1} \\
&=  x \hook Tf  + 2 t \del_t f.
\end{align*}
Therefore since $\del_t f = \tau$ we have the desired soliton equation.
Consequently if we take $\kappa := (-t)^{-1/2}$ we have the desired result.

Next assume $\bar{f}_t$ is another solution to the self similar solution
equation with $\bar{f}_{-1} \equiv f$. Take $E : N \to \mathbb{R}^n$ to be some
smooth embedding.  Define $\psi_t := E(f_t) - E(\bar{f}_t)$. Then by the chain
rule, for each $t$ we have
that $\psi_t$ is in the kernel of the operator
\begin{equation*}
\Psi: C^{\infty}(\mathbb{R}^m \times \mathbb{R}_{\leq 0},\bRn) \to T\bRn :
\psi \mapsto \tfrac{\del \psi}{\del t} + T \psi_t \prs{ \tfrac{x}{2t}}.
\end{equation*}
We verify that for any $s \in \mathbb{R}_{\leq 0}$ the hypersurface
$\mathbb{R}^m \times
\{s\}$ is non-characteristic with respect to the operator $\Psi$, which is
equivalent to demonstrating that $\Psi$ is non degenerate in the transverse
direction of the boundary of $\mathbb{R}^m \times \{ s \}$, namely,
\begin{equation*}
\left\langle \sigma[\Psi], \del_t \right\rangle \neq 0,
\end{equation*}
where here $\sigma[\Psi]$ denotes the symbol of $\Psi$. Now since
\begin{equation*}
\prs{ \sigma [\Psi] (\psi) } = \xi_t \psi + \xi_x \psi\prs{\tfrac{x}{2t} },
\end{equation*}
then it follows that
\begin{equation*}
\left\langle \sigma \brk{\Psi}, \xi_t \right\rangle = \brs{\xi_t}^2 \neq 0.
\end{equation*}
Thus, by Holmgren's Uniqueness Theorem (cf. \cite{Taylor} pp.433) there exists
some $\epsilon >0$ such that on $\mathbb{R}^m \times \brk{s - \epsilon, s+
\epsilon}$, we have $\Psi(\psi) \equiv 0$. Therefore the set
\begin{equation*}
\mathcal{T} := \{ \theta \in \mathbb{R}_{\leq 0} : \psi_{\theta} = 0  \}
\end{equation*}
is open. Since this set is closed (it is $\Psi^{-1}( \{ 0 \})$ and $\Psi$ is
continuous) due to the connectedness of $\mathbb{R}_{\leq 0}$ we conclude that
$\mathcal{T} = (- \infty, 0)$, therefore $f_t \equiv \bar{f}_t$, as desired. The
result follows.
\end{proof}
\end{lemma}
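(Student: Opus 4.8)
The plan is to prove both implications at once by a single scaling computation, exploiting the fact that both harmonic map heat flow and the tension field are covariant under dilations of the Euclidean domain.

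First I would record the elementary scaling identity: for $\lambda>0$ and $g\in C^{\infty}(\bRm,N)$, the dilated map $g^{\lambda}(x):=g(\lambda x)$ satisfies $\tau(g^{\lambda})(x)=\lambda^{2}\,\tau(g)(\lambda x)$. This is immediate from the coordinate formula $\tau(g)^{k}=\del_{\alpha}\del_{\alpha}g^{k}+\gG^{k}_{ij}(g)\,(\del_{\alpha}g^{i})(\del_{\alpha}g^{j})$, since under $x\mapsto\lambda x$ every term picks up exactly two factors of $\lambda$; no regularity issues arise because all maps involved are smooth.

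Next, set $f:=f_{-1}$ and $\lambda(t):=(-t)^{-1/2}$ for $t<0$, so that the family \eqref{lem:char110} reads $f_{t}(x)=f(\lambda(t)x)$. I would then compute the two sides of \eqref{eq:HMHF}. Since $\lambda'(t)/\lambda(t)=\tfrac{1}{2}\lambda(t)^{2}=-\tfrac{1}{2t}$, one has
\[
\del_{t}f_{t}(x)=\lambda'(t)\,x^{\alpha}(\del_{\alpha}f)(\lambda(t)x)=-\tfrac{1}{2t}\,\big(y\hook Tf\big)\big|_{y=\lambda(t)x},
\]
while the scaling identity gives $\tau(f_{t})(x)=\lambda(t)^{2}\tau(f)(\lambda(t)x)=-\tfrac{1}{t}\,\tau(f)(\lambda(t)x)$. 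Therefore $f_{t}$ solves \eqref{eq:HMHF} for all $t<0$ if and only if $\tfrac{1}{2}\big(y\hook Tf\big)(y)=\tau(f)(y)$ at every point of the form $y=\lambda(t)x$; since for any fixed $t<0$ the map $x\mapsto\lambda(t)x$ is a bijection of $\bRm$, this is precisely the soliton equation $\tau=\tfrac{x}{2}\hook Tf$. Both directions of the lemma follow simultaneously.

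The computation is essentially routine; the only points requiring care are the scaling behavior of the tension field — in particular keeping track of the pullback-connection Christoffel terms — and the observation that, once expressed in the variable $y=\lambda(t)x$, the pointwise identity is equivalent to the soliton equation holding on all of $\bRm$ rather than along a single dilation. One could instead argue the two implications separately (``soliton $\Rightarrow$ solution'' from the above, and ``solution $\Rightarrow$ soliton'' from the parabolic scale-invariance of \eqref{eq:HMHF}, by differentiating $f(\kappa x,\kappa^{2}t)$ in $\kappa$ at $\kappa=1$), but the unified computation is cleaner and avoids invoking any uniqueness theorem.
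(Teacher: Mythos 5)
Your argument is correct: the scaling law $\tau(g^{\lambda})(x)=\lambda^{2}\tau(g)(\lambda x)$, the chain-rule computation of $\del_t f_t$, and the observation that $x\mapsto\lambda(t)x$ is a bijection of $\bRm$ for each fixed $t<0$ together show that the family \eqref{lem:char110} solves \eqref{eq:HMHF} precisely when $f=f_{-1}$ satisfies $\tau=\tfrac{x}{2}\hook Tf$, so both implications follow from a single pointwise identity. This is a genuinely different route from the paper's. There, the direction ``\eqref{lem:char110} solves the flow implies soliton'' is proved by noting that the family is invariant under parabolic rescaling $f(x,t)=f(\kappa x,\kappa^{2}t)$ and differentiating in $\kappa$ at $\kappa=1$, which together with $\del_t f=\tau$ gives $x\hook Tf+2t\,\tau=0$; this is essentially half of your computation in different clothing. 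For the converse, however, the paper introduces the transport operator $\Psi(\psi)=\del_t\psi+T\psi\prs{\tfrac{x}{2t}}$ and runs a Holmgren-uniqueness plus open-closed argument to identify any solution of the self-similar equation with prescribed time $-1$ slice with the dilated family. Your unified computation avoids that machinery entirely, which is legitimate because the lemma only asks whether the specific family $f_t(x)=f_{-1}(x/\sqrt{-t})$ solves the flow, not whether it is the unique such extension of $f$; what the paper's extra argument buys is precisely that uniqueness of the self-similar extension (used in spirit in later arguments such as Lemma \ref{lem:harsoltnconst}), which your proof does not provide but also does not need for the statement at hand.
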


\begin{lemma} \label{lem:char2}
A map $f : \mathbb{R}^m \to N$ is a soliton if and only if $f$ is a
harmonic map with respect to the metric $e^{\frac{-\brs{x}^2}{4(m-2)}}
g_{\Euc}$.
\begin{proof}
We begin with a general computation of how the tension field changes under a
conformal change of the base metric.  Take $g_{\ga \gb} := e^{2 \psi}
\delta_{\ga \gb}$ and then recall the formula for the conformally changed
Christoffel symbols
\begin{align*}
(\gG^{g})_{\ga \gb}^{\gz}
&=  - \delta^{\zeta \gw} (\del_w \psi) \delta_{\ga \gb} + (\del_{\ga} \psi)
\delta_{\gb}^{\gz} + (\del_{\gb} \psi) \delta_{\ga}^{\gz}.
\end{align*}
Since one has 
\begin{align*}
\tau_{g}^i &=  f^i_{\ga \ga} - \prs{\gG^{g}}_{\ga \ga}^{\gz} f^{i}_{\gz} +
f^k_{\ga} f^{j}_{\ga} \gG_{k j}^{i},
\end{align*}
it follows that
\begin{align*}
\tau_{g}^i &= \tau^i - \prs{  - \delta^{\zeta \gw} (\del_w \psi) \delta_{\ga
\ga} + (\del_{\ga} \psi) \delta_{\ga}^{\gz} + (\del_{\ga} \psi)
\delta_{\ga}^{\gz} }  f^i_{\gz} \\
&= \tau^i + \left( \del_{\gw}f^i \right)(\del_w \psi) m - (\del_{\ga} \psi)
\left( \del_{\ga} f^i\right) - (\del_{\ga} \psi) \left( \del_{\ga} f^i \right)
\\
&= \tau^i + (m - 2)( (\N \psi) \hook  Tf )^i.
\end{align*}
Choosing $\psi = \frac{-\brs{x}^2}{4(m-2)}$ the second term becomes
\begin{align*}
(m - 2)( \N \psi \hook Tf )&= - (m-2) \prs{\frac{x}{2(m-2)} \hook Tf} \\
&=- \prs{ \tfrac{x}{2} \hook Tf} .
\end{align*}
Applying this we see that
\begin{align*}
\tau_{g}^i
&= \tau^i - (\tfrac{x}{2} \hook Tf)^i.
\end{align*}
The result follows.
\end{proof}
\end{lemma}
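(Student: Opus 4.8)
The plan is to show that the soliton equation $\tau = \prs{\tfrac{x}{2} \hook Tf}$ is exactly the harmonic map equation for $f$ when $f$ is regarded as a map out of $\bRm$ endowed with the conformally flat metric $g := e^{2\psi} g_{\Euc}$ for a suitably chosen function $\psi$. The point is that a conformal change of the \emph{domain} metric leaves the target Christoffel symbols $\gG_{kj}^i$ untouched, so the tension field changes only by a first-order term built entirely from $\psi$; forcing that term to equal $-\tfrac{x}{2}\hook Tf$ determines $\psi$.

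To carry this out I would first record the conformal transformation of the Christoffel symbols on $\bRm$: for $g_{\ga\gb} = e^{2\psi}\gd_{\ga\gb}$,
\begin{equation*}
\prs{\gG^{g}}_{\ga\gb}^{\gz} = -\gd^{\gz\gw}(\del_\gw\psi)\gd_{\ga\gb} + (\del_\ga\psi)\gd_\gb^{\gz} + (\del_\gb\psi)\gd_\ga^{\gz}.
\end{equation*}
Substituting into the formula
\begin{equation*}
\tau_{g}^i = g^{\ga\gb}\prs{ f_{\ga\gb}^i - \prs{\gG^{g}}_{\ga\gb}^{\gz} f_\gz^i + f_\ga^k f_\gb^j \gG_{kj}^i }
\end{equation*}
and tracing over $\ga$, the only term not already present in the Euclidean tension field is $-\prs{\gG^{g}}_{\ga\ga}^{\gz} f_\gz^i$; using $\gd_{\ga\ga} = m$, the three contributions to it combine to $(m-2)(\del_\gz\psi)f_\gz^i$. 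Hence the tension field with respect to $g$ equals $e^{-2\psi}$ times $\tau + (m-2)\prs{\N\psi \hook Tf}$, and since $e^{-2\psi}>0$ this has the same vanishing locus.

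It then remains only to choose $\psi$ so that $(m-2)\N\psi = -\tfrac{x}{2}$, i.e. $\psi = -\tfrac{\brs{x}^2}{4(m-2)}$ --- precisely the weight in the statement --- whence $\tau_g = e^{-2\psi}\prs{\tau - \prs{\tfrac{x}{2}\hook Tf}}$, so $f$ is $g$-harmonic if and only if $f$ is a soliton. I do not expect any genuine obstacle: the entire content is the conformal bookkeeping, with the one point worth an explicit remark being the harmless positive factor $e^{-2\psi}$ just mentioned. This trace is also exactly where the denominator $m-2$ in the conformal weight, and the tacit hypothesis $m \neq 2$, come from.
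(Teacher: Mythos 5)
Your proposal is correct and follows essentially the same route as the paper's proof: the conformal transformation of the domain Christoffel symbols, the trace over $\ga$ producing the extra term $(m-2)\prs{\N\psi \hook Tf}$, and the choice $\psi = -\tfrac{\brs{x}^2}{4(m-2)}$ to turn that term into $-\prs{\tfrac{x}{2}\hook Tf}$. The only difference is that you correctly retain the overall positive factor $e^{-2\psi}$ coming from the inverse metric $g^{\ga\gb}$, which the paper's displayed formula for $\tau_g$ silently drops; since this factor does not affect the vanishing locus, the conclusion is identical.
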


\subsection{Preliminary identities}

In this subsection we establish some basic identities needed for the proof of
Theorem \ref{thm:main1}.  If we define the quantity
\begin{equation*}
G_{x_0,t_0}(x,t) :=  \frac{e^{-\frac{|x-x_0|^2}{4(t_0 - t)}}}{(4 \pi (t_0 -
t))^{\tfrac{m}{2}}}.
\end{equation*}
then $G_{x_0,t_0}$ is the heat kernel of $\mathbb{R}^m$ and thus
satisfies the backwards heat flow \eqref{eq:BHF}.  When the basepoint is
understood we will use the notation $G_0(x) = G_{x_0,t_0}(x,0)$.

\begin{defn}\label{def:peg}
Suppose $f : \mathbb{R}^m \to N$. Then $f$ has \emph{polynomial
energy density growth} if there exists a polynomial $p$ such that for all $x \in
\mathbb{R}^m$, one has $\brs{Tf(x)} \leq p(\brs{x})$.   Also, we say that $f $
has \emph{polynomial energy growth} if there exists a polynomial $p$ such that
\begin{equation*}
\int_{B_{\brs{x}}} \brs{Tf}^2 dV \leq p(\brs{x}).
\end{equation*}
\end{defn}
Note the assumption of polynomial energy growth needs to be made in various
lemmas below, to justify the applications of dominated convergence.  We first
prove a generalization of Lemma 3 of \cite{Zhang} with varying base
points for a $(\chi,\sigma)$-soliton $f$ and the heat kernel $G_0$.  We require
this generalization for the proof of Theorem \ref{thm:main1}.

\begin{lemma} \label{lem:primid} Let $f \in
\mathfrak{S}_{\chi,\sigma}$ satisfy polynomial energy growth, and let $\vp =
\vp^{\ga} \del_{\ga}$ be a vector
field on $M$ such that $|\vp|^2 G_0 \in L^{\infty}(\bRm)$. Then
\begin{align}
\begin{split}\label{eq:primid}
\int_{\bRm} \vp^{\ga} (x-x_0)^{\ga} | T f |^2 G_0 dV
& =  - 4 t_0 \int_{\bRm} \left\langle Tf_{\gb} , Tf_{\ga} \right\rangle
(\del_{\gb} \vp^{\ga} )  G_0 dV + 2 t_0 \int_{\bRm} ( \del_{\ga}\vp^{\ga} ) |T f
|^2 G_0 dV \\
& \hsp  + 2 \int_{\bRm} \left(  \frac{t_0}{\sigma} \chi  -  x_0 + x \left(1 - 
\frac{t_0}{\sigma}  \right)  \right)^{\gb} \left\langle Tf_{\gb} , Tf_{\ga}
\right\rangle \vp^{\ga} G_0 dV.
\end{split}
\end{align}
\begin{proof}
First observe that
\begin{equation}\label{eq:primidG}
\frac{\del G_0}{\del x^{\ga}} = - \frac{(x-x_0)^{\ga}}{2 t_0} G_0.
\end{equation}
Let $\eta \in C_c^{\infty}(\bRm)$ and $\vp =\vp^{\ga} \del_{\ga}$ be some smooth
vector field on $\bRm$. Then integrating by parts yields
\begin{align}
\begin{split}\label{eq:primid1}
\int_{\bRm} \vp^{\ga} (x-x_0)^{\ga} | Tf |^2 \eta G_0 dV
&= - 2 t_0 \int_{\bRm} \vp^{\ga} | Tf |^2 (\del_{\ga} G_0 ) \eta dV \\
& = 2 t_0 \int_{\bRm} \left( \del_{\ga}( \eta \vp^{\ga})| Tf |^2 +  \vp^{\ga}(
\del_{\ga} | Tf |^2)\eta \right) G_0 dV.
\end{split}
\end{align}
We consider the following manipulation, involving integration by parts once
more, in order to obtain an identity for the second term above.
\begin{align*}
4 t_0 \int_{\bRm} \left\langle \tau, Tf_{\ga} \right\rangle \vp^{\ga} \eta G_0
dV
& = 4 t_0 \int_{\bRm} \left\langle \N_{\gb} Tf_{\gb}, Tf_{\ga} \right\rangle
\vp^{\ga} \eta G_0 dV \\
&=4 t_0 \int_{\bRm} \left( \del_{\gb} \langle Tf_{\gb} , Tf_{\ga} \rangle -
\langle Tf_{\gb} , \N_{\gb}Tf_{\ga} \rangle \right) \vp^{\ga} \eta G_0 dV \\
&= - 4 t_0 \int_{\bRm}  \langle Tf_{\gb} , Tf_{\ga} \rangle \del_{\gb} \left[
\vp^{\ga} \eta G_0 \right]  dV  -  4 t_0 \int_{\bRm}  \langle Tf_{\gb} ,
\N_{\ga}Tf_{\gb} \rangle \vp^{\ga} \eta G_0 dV \\
&= -  4 t_0 \int_{\bRm} \left\langle Tf_{\gb} , Tf_{\ga} \right\rangle \left(
\del_{\gb} \lb \vp^{\ga}\eta \rb  G_0 - \frac{(x-x_0)^{\gb}}{2 t_0} \eta
\vp^{\ga} G_0\right) dV \\
& \hsp- 2 t_0 \int_{\bRm} \del_{\ga} | Tf |^2 \vp^{\ga}  \eta G_0dV.
\end{align*}
We therefore conclude that
\begin{align}
\begin{split}\label{eq:primids2}
 2 t_0 \int_{\bRm} \del_{\ga} | Tf |^2 \eta G_0dV
 &= - 4 t_0 \int_{\bRm} \left\langle Tf_{\gb} ,Tf_{\ga} \right\rangle \left(
\del_{\gb} \lb \vp^{\ga}\eta \rb  G_0 - \frac{(x-x_0)^{\gb}}{2 t_0} \eta
\vp^{\ga} G_0\right) dV \\
 & \hsp - 4 t_0 \int_{\bRm} \left\langle \tau, Tf_{\ga} \right\rangle \vp^{\ga}
\eta G_0 dV.
\end{split}
\end{align}
Applying this identity to \eqref{eq:primid1} we conclude that
\begin{align}
\begin{split}\label{eq:primidA}
\int_{\bRm} \vp^{\ga} (x-x_0)^{\ga} | Tf |^2 \eta G_0 dV
& =  - 4 t_0 \int_{\bRm} \left\langle Tf_{\gb} , Tf_{\ga} \right\rangle \left(
\del_{\gb} \lb \vp^{\ga}\eta \rb  G_0 - \frac{(x-x_0)^{\gb}}{2 t_0}\vp^{\ga} 
\eta G_0\right) dV\\
& \hsp  + 2 t_0 \int_{\bRm} \del_{\ga} \lb \vp^{\ga} \eta \rb | Tf |^2 G_0 dV
 - 4 t_0 \int_{\bRm} \left\langle \tau, Tf_{\ga} \right\rangle \vp^{\ga} \eta
G_0 dV.
\end{split}
\end{align}
We let $\eta_R$ be a cut off function with support within $B_R$ which cuts off
to zero linearly between $B_{R}$ and $B_{R+1}$. Setting $\eta = \eta_R$ above in
\eqref{eq:primidA} and sending $R \to \infty$, it follows from the Dominated
Convergence Theorem that we can remove $\eta$ from the quantities above. Since
$f \in \mathfrak{S}_{\chi,\sigma}$ we replace $\tau = \tfrac{(x- \chi)}{2
\sigma}$
accordingly
\begin{align*}
\int_{\bRm} \vp^{\ga} (x-x_0)^{\ga} | Tf |^2 G_0 dV
& =  - 4 t_0 \int_{\bRm} \left\langle Tf_{\gb} , Tf_{\ga} \right\rangle
(\del_{\gb} \vp^{\ga} )  G_0 dV + 2 t_0 \int_{\bRm} (\del_{\ga} \vp^{\ga}) | Tf
|^2 G_0 dV \\
& \hsp  + 2 \int_{\bRm} \left(  \frac{t_0}{\sigma} \chi  -  x_0 + x \left(1 - 
\frac{t_0}{\sigma}  \right)  \right)^{\gb} \left\langle Tf_{\gb} , Tf_{\ga}
\right\rangle \vp^{\ga} G_0 dV.
\end{align*}
The result follows.
\end{proof}
\end{lemma}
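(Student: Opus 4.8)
The plan is to establish the identity by a two-step integration by parts argument against the Gaussian weight $G_0$, exactly mirroring the structure of Lemma 3 of \cite{Zhang} but keeping the basepoint $x_0$ and the soliton parameters $(\chi, \sigma)$ distinct. First I would record the elementary but crucial differentiation formula $\del_{\ga} G_0 = -\tfrac{(x-x_0)^{\ga}}{2t_0} G_0$, which is the mechanism converting polynomial factors of $(x-x_0)$ into derivatives of $G_0$ and back. Introducing a compactly supported cutoff $\eta$ to make all the integrations by parts rigorous, I would rewrite the left-hand side $\int \vp^{\ga}(x-x_0)^{\ga} |Tf|^2 \eta G_0$ by replacing $(x-x_0)^{\ga} G_0 = -2t_0 \del_{\ga} G_0$ and integrating by parts, producing a term with $\del_{\ga}(\eta \vp^{\ga})|Tf|^2$ and a term with $\vp^{\ga}(\del_{\ga}|Tf|^2)\eta$.

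The second step handles the unwanted term $\int \vp^{\ga}(\del_{\ga}|Tf|^2)\eta G_0$. Here I would run a separate computation starting from $4t_0 \int \langle \tau, Tf_{\ga}\rangle \vp^{\ga} \eta G_0$, write $\tau = \N_{\gb} Tf_{\gb}$, integrate by parts in $x^{\gb}$, and use the symmetry $\N_{\gb} Tf_{\ga} = \N_{\ga} Tf_{\gb}$ (torsion-freeness of the pulled-back connection on $T^*M \otimes f^*TN$ over flat $M$) to recognize $\langle Tf_{\gb}, \N_{\ga} Tf_{\gb}\rangle = \tfrac12 \del_{\ga}|Tf|^2$. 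Collecting terms and again invoking $\del_{\gb} G_0 = -\tfrac{(x-x_0)^{\gb}}{2t_0}G_0$, this yields an expression for $2t_0 \int \del_{\ga}|Tf|^2 \eta G_0$ in terms of $\langle Tf_{\gb}, Tf_{\ga}\rangle$, the weight derivative, and $\langle \tau, Tf_{\ga}\rangle$. Substituting this back into the first-step identity gives \eqref{eq:primidA} with $\eta$ still present.

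To remove the cutoff I would take $\eta = \eta_R$ supported in $B_R$, cutting off linearly on $B_{R+1}\setminus B_R$, and let $R\to\infty$. The polynomial energy growth hypothesis on $f$, together with the assumption $|\vp|^2 G_0 \in L^\infty$, ensures the integrands are dominated by an integrable function (the Gaussian decay of $G_0$ beats any polynomial), so the Dominated Convergence Theorem lets me drop $\eta$ and kill the boundary error from $\del \eta_R$. Finally, since $f \in \mathfrak{S}_{\chi,\sigma}$ I substitute $\tau = \tfrac{x-\chi}{2\sigma} \hook Tf$, i.e. $\langle \tau, Tf_{\ga}\rangle = \tfrac{(x-\chi)^{\gb}}{2\sigma}\langle Tf_{\gb}, Tf_{\ga}\rangle$, into the $\langle \tau, Tf_{\ga}\rangle$ term; combining the resulting $\tfrac{(x-\chi)}{2\sigma}$ contribution with the $\tfrac{(x-x_0)}{2t_0}$ weight-derivative contribution and simplifying the coefficient of $\langle Tf_{\gb}, Tf_{\ga}\rangle \vp^{\ga}$ produces precisely the vector $\tfrac{t_0}{\sigma}\chi - x_0 + x(1 - \tfrac{t_0}{\sigma})$ appearing in \eqref{eq:primid}.

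The main obstacle is bookkeeping rather than conceptual: one must track the two different reference points ($x_0$ from the Gaussian, $\chi$ from the soliton equation, with the scale mismatch $t_0$ versus $\sigma$) through the integrations by parts without conflating them, since in Zhang's original lemma they coincide and the cancellations are cleaner. I would be careful that the algebraic recombination of the $(x-x_0)/t_0$ and $(x-\chi)/\sigma$ terms is done correctly, as that is exactly where the stated coefficient $\tfrac{t_0}{\sigma}\chi - x_0 + x(1-\tfrac{t_0}{\sigma})$ emerges and where an arithmetic slip would be easy. The analytic justification for passing $R\to\infty$ is routine given the stated hypotheses and I would not belabor it.
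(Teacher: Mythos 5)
Your proposal is correct and follows essentially the same route as the paper's proof: the weight-derivative identity $\del_{\ga}G_0 = -\tfrac{(x-x_0)^{\ga}}{2t_0}G_0$, a first integration by parts against a cutoff, a second integration by parts starting from $4t_0\int\langle\tau,Tf_{\ga}\rangle\vp^{\ga}\eta G_0\,dV$ using the symmetry $\N_{\gb}Tf_{\ga}=\N_{\ga}Tf_{\gb}$ to absorb the $\del_{\ga}|Tf|^2$ term, removal of the cutoff by dominated convergence, and finally the substitution of the $(\chi,\sigma)$-soliton equation to produce the stated coefficient. No gaps.
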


\begin{cor} \label{cor:solitonids} Suppose that $f \in
\mathfrak{S}_{\chi,\gs}$. Then the following identities hold.
\begin{enumerate}
\item[(a)] 
\begin{align*}
\int_{\bRm} |x-x_0|^2 | Tf |^2 G_0 dV =&\ 2 \int_{\bRm} \left( |(x-x_0) \hook Tf
|^2 - \frac{t_0}{\sigma} \left\langle (x-
\chi)\hook Tf, (x-x_0)\hook Tf \right\rangle \right) G_0 dV\\
& \hsp +  2 t_0 \int_{\bRm}\left( m - 2 \right) | Tf |^2 G_0 dV.
\end{align*}
\item[(b)]
\begin{align*}
\int_{\bRm}  (x-x_0)^{\gamma} | Tf |^2 G_0 dV
& = 2 \int_{\bRm}  \left\langle \left(  \frac{t_0}{\sigma} \chi  -  x_0 + x
\left(1 -  \frac{t_0}{\sigma}  \right) \hook Tf  \right), T f_{\gamma}
\right\rangle  G_0 dV.
\end{align*}
\end{enumerate}

\begin{proof}
Identity (a) follows by setting $\vp^{\ga} := (x-x_0)^{\ga}$, while identity (b)
follows by setting $\vp^{\ga} := \delta^{\ga \gamma}$.
\end{proof}
\end{cor}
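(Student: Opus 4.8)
The plan is to read off both identities as special cases of Lemma \ref{lem:primid}, applied to two elementary choices of the vector field $\vp$; under the standing polynomial energy growth hypothesis on $f$, the only thing to check before invoking the lemma is that $\brs{\vp}^2 G_0 \in L^\infty(\bRm)$, which for both choices follows from the Gaussian decay of $G_0$ dominating the (at worst linear) growth of $\vp$.

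For (b) I would take $\vp^\ga := \gd^{\ga\gamma}$. Then $\vp^\ga (x-x_0)^\ga = (x-x_0)^\gamma$ on the left of \eqref{eq:primid}, both divergence-type terms $\del_\gb \vp^\ga$ and $\del_\ga \vp^\ga$ vanish identically, and the last term of \eqref{eq:primid} collapses to exactly $2\int_{\bRm}\langle(\tfrac{t_0}{\sigma}\chi - x_0 + x(1-\tfrac{t_0}{\sigma}))\hook Tf,\ Tf_\gamma\rangle G_0\, dV$, which is the right-hand side of (b).

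For (a) I would take $\vp^\ga := (x-x_0)^\ga$. Then the left side of \eqref{eq:primid} is $\int_{\bRm}\brs{x-x_0}^2\brs{Tf}^2 G_0\, dV$; since $\del_\gb\vp^\ga = \gd_{\gb\ga}$, the first term on the right contributes $-4t_0\int_{\bRm}\brs{Tf}^2 G_0\, dV$, and since $\del_\ga\vp^\ga = m$, the second contributes $2t_0 m\int_{\bRm}\brs{Tf}^2 G_0\, dV$; together these give the $2t_0(m-2)\int_{\bRm}\brs{Tf}^2 G_0\, dV$ term of (a). For the final term I would use the algebraic identity $\tfrac{t_0}{\sigma}\chi - x_0 + x(1-\tfrac{t_0}{\sigma}) = (x-x_0) - \tfrac{t_0}{\sigma}(x-\chi)$, so that $(\tfrac{t_0}{\sigma}\chi - x_0 + x(1-\tfrac{t_0}{\sigma}))^\gb\langle Tf_\gb, Tf_\ga\rangle (x-x_0)^\ga$ becomes $\brs{(x-x_0)\hook Tf}^2 - \tfrac{t_0}{\sigma}\langle(x-\chi)\hook Tf,(x-x_0)\hook Tf\rangle$; this produces the remaining term of (a) and completes the proof. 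There is essentially no obstacle here: the corollary is pure bookkeeping from Lemma \ref{lem:primid}, and the only point demanding a moment's attention is the above algebraic rewriting of the combination $\tfrac{t_0}{\sigma}\chi - x_0 + x(1-\tfrac{t_0}{\sigma})$ in terms of $x-x_0$ and $x-\chi$.
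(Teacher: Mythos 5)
Your proposal is correct and is exactly the paper's argument: the paper proves (a) by taking $\vp^{\ga} := (x-x_0)^{\ga}$ and (b) by taking $\vp^{\ga} := \delta^{\ga\gamma}$ in Lemma \ref{lem:primid}, precisely as you do. Your bookkeeping (the cancellation $-4t_0 + 2mt_0 = 2t_0(m-2)$ and the rewriting $\tfrac{t_0}{\sigma}\chi - x_0 + x(1-\tfrac{t_0}{\sigma}) = (x-x_0) - \tfrac{t_0}{\sigma}(x-\chi)$) checks out, and your verification of the hypothesis $\brs{\vp}^2 G_0 \in L^{\infty}$ is a detail the paper leaves implicit.
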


The following corollary is a result of coinciding base points of the heat kernel
$G_0$ and $f$, and follows from choice of appropriate vector fields.

\begin{cor}[\cite{Zhang}, Lemma 3]\label{cor:lem3zhang} Suppose that $f \in
\mathfrak{S}$, $\gamma \in [1,m] \cap \mathbb{N}$, and $\zeta = \zeta^{\ga}
\del_{\ga}$ is some constant vector field on $\bRm$. Then the following
identities hold.
\begin{enumerate}
\item[(a)] $\int_{\bRm} \left( \frac{2-m}{4} + \frac{|x-x_0|^2}{8 t_0} \right) |
Tf |^2 G_0 dV = 0$,
\item[(b)] $\int_{\bRm} (x-x_0)^{\gamma} |Tf |^2 G_0 dV=0$,
\item[(c)] $\int_{\bRm} |x-x_0|^4 | Tf |^2 G_0 dV = \int_{\bRm} \left( 4m(m-2)
t_0^2  | Tf |^2 - 32 t_0^3 | \tau |^2 \right) G_0 dV$,
\item[(d)] $\int_{\bRm} |x-x_0|^2 \langle \zeta, x-x_0 \rangle |Tf|^2 G_0 dV =
\int_{\bRm} \langle \zeta \hook Tf, \tau \rangle G_0 dV = 0$,
\item[(e)] $\int_{\bRm} \langle \zeta, x-x_0 \rangle^2 | Tf |^2 G_0 dV = 2
t_0\int_{\bRm} \left( |\zeta|^2 |Tf |^2 - 2 | \zeta \hook Tf |^2 \right) G_0
dV$.
\end{enumerate}

\begin{proof}
The lemma follows by choosing various test functions, indicated below.
\begin{enumerate}
\item[(a)] This follows by setting $\vp^{\ga} := \frac{(x-x_0)^{\ga}}{8 t_0}$.
\item [(b)] This follows by setting $\vp^{\ga} := \delta^{\ga \gamma}$.
\item [(c)] We set $\vp^{\ga} := |x-x_0|^2 (x-x_0)^{\ga}$. Then we have that
\begin{align*}
\del_{\gb} \vp^{\ga} = 2 (x-x_0)^{\gb} (x-x_0)^{\ga} + \delta^{\ga}_{\gb}
|x-x_0|^2.
\end{align*}
and obtain
\begin{align*}
\int_{\bRm} |x-x_0|^4 | Tf |^2 G_0 dV
& =  - 8 t_0 \int_{\bRm} \brs{Tf(x-x_0)}^2 G_0 dV  - 4 t_0 \int_{\bRm} | Tf |^2
|x-x_0|^2 G_0 dV\\
& \hsp + 2 t_0 \int_{\bRm} \left((2+ m) |x-x_0|^2 \right) | Tf |^2 G_0 dV \\
&= - 32 t_0^3 \int_{\bRm} |\tau|^2 G_0 dV  + 2m t_0 \int_{\bRm} \left( |x-x_0|^2
\right) | Tf |^2 G_0 dV.
\end{align*}
applying part $(a)$ to the second term yields
\begin{equation*}
\int_{\bRm} |x-x_0|^4 | Tf |^2 G_0 dV  =- 32 t_0^3 \int_{\bRm} |\tau|^2 G_0 dV+
4m (m-2) t_0^2 \int_{\bRm} | Tf |^2 G_0 dV.
\end{equation*}

\item[(d)] First setting $\vp^{\ga} := |x-x_0|^2 \zeta^{\ga}$ and applying (b)
we have
\begin{align}
\begin{split}\label{eq:lem3zhangd1}
\int_{\bRm} |x-x_0|^2 \langle (x-x_0),\zeta \rangle | Tf |^2 G_0 dV
& =  - 8 t_0 \int_{\bRm} \left\langle   (x-x_0) \hook Tf , \zeta \hook Tf
\right\rangle  G_0
dV.
\end{split}
\end{align}
Next we will consider $\vp^{\ga} := \langle \zeta, x-x_0 \rangle (x-x_0)^{\ga}$.
Note that
\begin{equation*}
\del_{\gb}\lb \zeta^{\mu} (x-x_0)^{\mu} (x-x_0)^{\ga} \rb = \zeta^{\mu}
\delta^{\mu}_{\gb} (x-x_0)^{\ga} + \zeta^{\mu} (x-x_0)^{\mu} \delta_{\gb}^{\ga}
= \zeta^{\gb} (x-x_0)^{\ga} + \left\langle \zeta , x-x_0 \right\rangle
\delta_{\gb}^{\ga}.
\end{equation*}
Applying this identity, and using (b) once more we obtain
\begin{align}
\begin{split}\label{eq:lem3zhangd2}
\int_{\bRm} |x-x_0|^2 \langle (x-x_0),\zeta \rangle | Tf |^2 G_0 dV
& =  - 4 t_0 \int_{\bRm} \left\langle Tf_{\gb} , Tf_{\ga} \right\rangle (
\zeta^{\gb} (x-x_0)^{\ga} + \left\langle \zeta , x-x_0 \right\rangle
\delta_{\gb}^{\ga})  G_0 dV \\
& \hsp + 2 t_0 \int_{\bRm} ((1+m)\left\langle \zeta, (x-x_0) \right\rangle) | Tf
|^2 G_0 dV\\
& =  - 4 t_0 \int_{\bRm} \left\langle  \zeta \hook Tf , (x-x_0) \hook Tf
\right\rangle
G_0 dV.
\end{split}
\end{align}
Comparing equalities \eqref{eq:lem3zhangd1} and \eqref{eq:lem3zhangd2} we
conclude the desired result.

\item[(e)] This follows by setting $\vp^{\ga} := \langle \zeta , x-x_0 \rangle
\zeta^{\ga}$.
\end{enumerate}
The results follow.
\end{proof}
\end{cor}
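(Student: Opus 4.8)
The plan is to prove Corollary~\ref{cor:lem3zhang} by specializing the master identity \eqref{eq:primid} of Lemma~\ref{lem:primid} to the case of coinciding basepoints. Concretely, since $f \in \mathfrak{S} = \mathfrak{S}_{0,1}$ we have $\chi = 0$ and $\sigma = 1$, and since we take the heat kernel $G_0$ based at the same point we also set $x_0 = 0$ and $t_0 = 1$. With these choices the coefficient $\frac{t_0}{\sigma}\chi - x_0 + x(1 - \frac{t_0}{\sigma})$ appearing in the last term of \eqref{eq:primid} collapses entirely to zero, so the identity simplifies to
\begin{equation*}
\int_{\bRm} \vp^{\ga} x^{\ga} |Tf|^2 G_0 \, dV = -4\int_{\bRm} \ip{Tf_\gb, Tf_\ga}(\del_\gb \vp^{\ga}) G_0 \, dV + 2\int_{\bRm}(\del_\ga \vp^{\ga})|Tf|^2 G_0 \, dV.
\end{equation*}
Each of the five claimed identities then follows by inserting the test vector field indicated in the statement and computing the divergence $\del_\ga \vp^{\ga}$ and the full Jacobian $\del_\gb \vp^{\ga}$, which are elementary polynomial computations. (One must check in each case that the growth hypothesis $|\vp|^2 G_0 \in L^\infty$ holds, which is immediate since $\vp$ is polynomial in $x$ and $G_0$ decays like a Gaussian.)

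For part~(a), taking $\vp^{\ga} = \frac{x^{\ga}}{8t_0}$ (with $t_0 = 1$, but keeping $t_0$ for bookkeeping) gives $\del_\ga \vp^{\ga} = \frac{m}{8t_0}$ and $\del_\gb \vp^{\ga} = \frac{\delta_\gb^{\ga}}{8t_0}$, so the left side becomes $\int \frac{|x|^2}{8t_0}|Tf|^2 G_0\,dV$ and the right side becomes $\int(-\frac{1}{2t_0} + \frac{m}{4t_0})|Tf|^2 G_0\,dV$; rearranging yields $\int(\frac{2-m}{4} + \frac{|x|^2}{8t_0})|Tf|^2 G_0\,dV = 0$. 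Part~(b) is the choice $\vp^{\ga} = \delta^{\ga\gamma}$, for which both $\del_\gb\vp^{\ga}$ and $\del_\ga\vp^{\ga}$ vanish and the left side is $\int x^\gamma|Tf|^2 G_0\,dV$, giving $0$ directly. Part~(c) uses $\vp^{\ga} = |x|^2 x^{\ga}$, whose derivatives are worked out in the statement's proof sketch; after substituting one applies part~(a) to the term $\int |x|^2|Tf|^2 G_0\,dV$ and uses the soliton relation $\tau = \frac{x}{2}\hook Tf$ (so $|\tau|^2 = \frac{1}{4}|x \hook Tf|^2$) to convert the remaining $|x \hook Tf|^2$ term. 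Part~(e) is the choice $\vp^{\ga} = \ip{\zeta, x}\zeta^{\ga}$, giving $\del_\gb\vp^{\ga} = \zeta^\gb\zeta^{\ga}$ and $\del_\ga\vp^{\ga} = |\zeta|^2$, which lands exactly on the stated formula.

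Part~(d) is the only one requiring genuine cleverness rather than bookkeeping, and it is the step I expect to be the main obstacle. The point is that a single test field does not directly produce the identity; instead one computes the integral $\int |x|^2 \ip{\zeta, x}|Tf|^2 G_0\,dV$ in \emph{two} different ways — once via $\vp^{\ga} = |x|^2\zeta^{\ga}$ (using part~(b) to kill a leftover linear term) and once via $\vp^{\ga} = \ip{\zeta,x}x^{\ga}$ (again using part~(b)) — obtaining on one hand $-8t_0\int\ip{x \hook Tf, \zeta \hook Tf}G_0\,dV$ plus a term that vanishes, and on the other hand $-4t_0\int\ip{\zeta \hook Tf, x \hook Tf}G_0\,dV$. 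Comparing the two expressions forces $\int\ip{\zeta \hook Tf, x \hook Tf}G_0\,dV = 0$, i.e. the middle quantity in (d) vanishes; combined with the soliton equation $\tau = \frac{x}{2}\hook Tf$ this simultaneously gives $\int\ip{\zeta \hook Tf, \tau}G_0\,dV = 0$ and, feeding back into the first computation, $\int |x|^2\ip{\zeta, x}|Tf|^2 G_0\,dV = 0$. I would organize the write-up so that (a), (b), (c), (e) come first as nearly one-line verifications and then present (d) as this two-computation comparison argument.
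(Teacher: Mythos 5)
Your proposal is correct and follows essentially the same route as the paper: specialize Lemma \ref{lem:primid} to coinciding basepoints so that the last term of \eqref{eq:primid} vanishes, insert the indicated test fields for (a), (b), (c), (e), and for (d) compare the two evaluations of $\int_{\bRm}|x-x_0|^2\langle \zeta,x-x_0\rangle|Tf|^2G_0\,dV$ obtained from $\vp^{\ga}=|x-x_0|^2\zeta^{\ga}$ and $\vp^{\ga}=\langle\zeta,x-x_0\rangle(x-x_0)^{\ga}$, which is exactly the paper's argument. The only cosmetic slip is in (a), where the right-hand side should read $-\tfrac{1}{2}+\tfrac{m}{4}$ rather than $-\tfrac{1}{2t_0}+\tfrac{m}{4t_0}$ (the $t_0$ factors cancel against the $t_0$'s in \eqref{eq:primid}); since you take $t_0=1$ this does not affect the conclusion.
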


We next observe that as a consequence of the above identites, the harmonic map
heat flow with source of
dimension $m=2$ cannot exhibit type I singularities.

\begin{prop} Let $f_t: M^m \to N$ be a smooth solution to harmonic map
heat flow which exists on a maximal time interval $[0,t_0)$ with $t_0<\infty$.
If $m=2$ then
\begin{equation*}
\lim_{t \to t_0}(t_0-t) \brs{Tf_t} = \infty.
\end{equation*}
Moreover, any soliton on $\mathbb{R}^m$ for $m \leq 2$ is constant.

\begin{proof}
Suppose to the contrary there exists some $C \in \mathbb{R}$ such that $\lim_{t
\to t_0} (t_0-t) \brs{Tf_t} \leq C$. By \cite{Grayson}) one may construct a
type I blowup limit $f_{\infty}$ which is a self similar solution and thus its
time $t=0$ slice with $t_0 = 1$ is a non constant soliton. By Corollary
\ref{cor:lem3zhang} (a), since $m = 2$ one has that
\begin{equation*}
\int_{\bRm} \left( \frac{|x|^2}{8} \right) | Tf |^2 G_0 dV = 0.
\end{equation*}
Therefore $f_{\infty}$ is constant, but this is a contradiction. Thus, the
result follows.
\end{proof}
\end{prop}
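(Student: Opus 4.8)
The plan is to prove the two assertions in reverse order, since the rigidity of low-dimensional solitons is exactly what obstructs type I singularities. For the ``moreover'' statement, I would apply Corollary~\ref{cor:lem3zhang}(a): for any $f\in\mathfrak{S}$ one has
\begin{equation*}
\int_{\bRm}\left(\tfrac{2-m}{4}+\tfrac{|x|^2}{8}\right)\brs{Tf}^2 G_0\,dV=0.
\end{equation*}
When $m\leq 2$ the weight $\tfrac{2-m}{4}+\tfrac{|x|^2}{8}$ is everywhere nonnegative — strictly positive away from the origin when $m=2$, and everywhere positive when $m\leq 1$ — while $\brs{Tf}^2 G_0\geq 0$ with $G_0>0$. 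A continuous, nonnegative integrand with vanishing integral is identically zero, so $Tf\equiv 0$ and $f$ is constant. (Note that Corollary~\ref{cor:lem3zhang} rests on Lemma~\ref{lem:primid}, whose integrations by parts require a growth hypothesis such as polynomial energy growth; this is automatic for the soliton limits produced below. Also the conformal-harmonic characterization of Lemma~\ref{lem:char2} is unavailable at $m=2$, so this direct argument is the right tool.)

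For the first assertion I would argue by contradiction. Suppose the quantity $(t_0-t)\brs{Tf_t}$ stays bounded by some constant $C$ as $t\to t_0$. Since $[0,t_0)$ is maximal with $t_0<\infty$ the energy density blows up at $t_0$, and the assumed bound is precisely the type I condition. By the rescaling construction of Grayson~\cite{Grayson} — which invokes Struwe's $\ge$-regularity~\cite{Struwe1} and the monotonicity formula of Lemma~\ref{thm:Struwemonotonicity} — one extracts a blowup limit that is a non-constant self-similar solution of~\eqref{eq:HMHF} out of $\bRm$; its $t=0$ slice, normalized to $t_0=1$, is a non-constant element of $\mathfrak{S}$. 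But $m=2$, so the rigidity just proved forces this soliton to be constant, a contradiction. Hence $\lim_{t\to t_0}(t_0-t)\brs{Tf_t}=\infty$.

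The single delicate point is the extraction of the non-constant soliton limit: one must know that the rescaled maps do not collapse to a point near the singular point (this is where $\ge$-regularity enters — the energy density cannot decay to zero along the flow at a singular point) and that the limit inherits enough decay to apply Corollary~\ref{cor:lem3zhang}. Both are exactly what the type I blowup procedure of~\cite{Grayson} supplies, so the real content reduces to citing that construction correctly; the rest is the elementary positivity argument above.
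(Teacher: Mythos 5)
Your proposal is correct and follows essentially the same route as the paper: it uses Corollary \ref{cor:lem3zhang}(a) to show that the weight $\tfrac{2-m}{4}+\tfrac{|x|^2}{8}$ is nonnegative for $m\leq 2$, forcing $Tf\equiv 0$, and then combines this rigidity with the Grayson type I blowup construction to contradict the existence of a bounded $(t_0-t)\brs{Tf_t}$. Your write-up is in fact somewhat more careful than the paper's (handling $m\leq 1$ explicitly and flagging the growth hypotheses needed for the integration by parts), but the underlying argument is identical.
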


\subsection{Variation identities}

For the following computations we will consider the quantities $f$, $x$, $t$ to
be varying by a parameter $s$.  Often notational dependency on $s$ will be
suppressed except for on the base point variables $(x_s, t_s)$. We will first
lay out some fundamental variation identities which will be applied to the
formulas of Propositions \ref{prop:1stvarM} and \ref{prop:2ndvarM} to
demonstrate the identities specific to the Euclidean source case. We then
evaluate at a soliton in order to derive the $L^f$ operator, a quantity whose
negative spectrum characterizes the stability of its corresponding soliton $f$.

\begin{lemma} \label{lem:Gderivs}
The following identities hold.
\begin{align*}
\frac{d G_s}{d s} &= \mathsf{g}_s G_s, \\
\frac{d^2 G_s}{d s^2} &= \prs{ \frac{ d \mathsf{g}_s}{ds} } G_s + \prs{
\mathsf{g}_s }^2 G_s,
\end{align*}
where 
\begin{equation}\label{eq:defsfg}
\mathsf{g}_s :=  \left( -  \frac{m \dot{t}_s}{2 t_s} + \frac{\dot{t}_s
|x-x_s|^2}{4 t_s^2} + \frac{\langle \dot{x}_s , x-x_s \rangle}{2 t_s} \right).
\end{equation}

\begin{proof}
Using chain rule, and observing that $G_s = \tfrac{e^{-
\frac{\brs{x_s}^2}{4t_s}}}{(4 \pi t)^{m/2}}$ and applying \ref{eq:primidG} we
have
\begin{align*}
\frac{d G_s}{d s} 
&= \frac{\del G_s}{\del x^{\ga}} \frac{\del x^{\ga}}{\del s}  + \frac{\del
G_s}{\del t} \frac{\del t}{\del s}\\
&=  - \frac{\ip{ x- x_s, \dot{x}_s }}{2 t_s} \frac{e^{-
\tfrac{\brs{x-x_s}^2}{4t_s}}}{(4 \pi t_s)^{m/2}} + \prs{- \frac{\dot{t}_s}{4
t_s^2}} \frac{e^{ -\tfrac{\brs{x-x_s}^2}{4t_s}}}{(4 \pi t_s)^{m/2}} - \frac{m
\dot{t}_s}{2} \frac{e^{-\tfrac{\brs{x-x_s}^2}{4t_s}}}{(4 \pi)^{m/2}
t_s^{m/2+1}}\\
&= \left( -  \frac{m \dot{t}_s}{2 t_s} + \frac{\dot{t}_s |x-x_s|^2}{4 t_s^2} +
\frac{\langle \dot{x}_s , x-x_s \rangle}{2 t_s} \right) G_s \\
&= \mathsf{g}_s G_s.
\end{align*}
This gives the first identity. Differentiating again we have that
\begin{align*}
\frac{d^2 G_s}{d s^2} 
&= \frac{d}{ds} \brk{ \mathsf{g}_s G_s} \\
&= \prs{ \frac{d \mathsf{g}_s}{ds}  + \mathsf{g}_s^2 }G_s. 
\end{align*}
The result follows.
\end{proof}
\end{lemma}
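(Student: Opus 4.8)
The plan is to treat $G_s = G_{x_s,t_s}(x,0) = (4\pi t_s)^{-m/2}\exp\prs{-\tfrac{\brs{x-x_s}^2}{4t_s}}$ as a composite function of the $s$-dependent parameters $x_s$ and $t_s$, with the spatial variable $x$ held fixed throughout, and to differentiate by the chain rule. Everything reduces to two elementary partial derivatives of the heat kernel: its derivative in the basepoint $x_s$ and its derivative in the scale parameter $t_s$.

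For the first identity I would first record the derivative in the basepoint, which is \eqref{eq:primidG} up to the sign change coming from differentiating in $x_s$ rather than in $x$:
\begin{equation*}
\frac{\del G_s}{\del x_s^{\ga}} = \frac{(x-x_s)^{\ga}}{2 t_s}\, G_s.
\end{equation*}
Next I would differentiate the explicit expression for $G_s$ in $t_s$, which produces exactly two contributions: differentiating the prefactor $(4\pi t_s)^{-m/2}$ gives $-\tfrac{m}{2 t_s}\, G_s$, and differentiating the exponent $-\tfrac{\brs{x-x_s}^2}{4 t_s}$ gives $\tfrac{\brs{x-x_s}^2}{4 t_s^2}\, G_s$, so that
\begin{equation*}
\frac{\del G_s}{\del t_s} = \prs{-\frac{m}{2 t_s} + \frac{\brs{x-x_s}^2}{4 t_s^2}} G_s.
\end{equation*}
Assembling $\tfrac{d G_s}{ds} = \tfrac{\del G_s}{\del x_s^{\ga}}\,\dot{x}_s^{\ga} + \tfrac{\del G_s}{\del t_s}\,\dot{t}_s$ and comparing with the definition \eqref{eq:defsfg} then yields $\tfrac{d G_s}{ds} = \mathsf{g}_s G_s$; the three summands of $\mathsf{g}_s$ are exactly the scale term $-\tfrac{m \dot{t}_s}{2 t_s}$, the exponent-in-scale term $\tfrac{\dot{t}_s \brs{x-x_s}^2}{4 t_s^2}$, and the basepoint term $\tfrac{\ip{\dot{x}_s , x-x_s}}{2 t_s}$.

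For the second identity there is nothing new to prove: I would simply apply the product rule,
\begin{equation*}
\frac{d^2 G_s}{ds^2} = \frac{d}{ds}\brk{\mathsf{g}_s G_s} = \prs{\frac{d\mathsf{g}_s}{ds}} G_s + \mathsf{g}_s\, \frac{d G_s}{ds},
\end{equation*}
and then substitute the first identity into the final term to replace $\tfrac{d G_s}{ds}$ by $\mathsf{g}_s G_s$, leaving $\tfrac{d\mathsf{g}_s}{ds}$ unexpanded since its explicit form is only needed later.

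I do not anticipate any genuine obstacle, as this is a bookkeeping computation; the only points requiring attention are the sign convention for derivatives in the basepoint $x_s$ versus the point $x$, and keeping the chain-rule contributions of the spatial and scale variables from being conflated.
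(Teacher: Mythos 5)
Your proof is correct and follows essentially the same route as the paper: chain rule in the parameters $(x_s,t_s)$, using the two elementary partial derivatives of the Gaussian and then the product rule for the second derivative. Your explicit attention to the sign flip between differentiating in $x$ versus in the basepoint $x_s$ is in fact slightly more careful than the paper's own intermediate line, which quotes \eqref{eq:primidG} with the $x$-derivative sign before landing on the correct final expression for $\mathsf{g}_s$.
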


Next we compute some identities operations performed on $\mathsf{g}_s$ (cf.
\eqref{eq:defsfg}) which will be used for the following variations.
\begin{align}
\frac{\del \mathsf{g}_s}{\del x^{\ga}} &= \left( \frac{\dot{t}_s}{2 t_s}
(x-x_s)^{\ga} + \frac{\dot{x}_s^{\ga}}{2t_s} \right) \\
\begin{split}\label{eq:delgid}
\frac{d \mathsf{g}_s}{d s}  &= - \frac{m}{2} \left( \frac{\ddot{t}_s}{t_s} -
\frac{\dot{t}_s^2}{t_s^2} \right) + \frac{|x-x_s|^2}{4} \left(
\frac{\ddot{t}_s}{t^2_s} - \frac{2 \dot{t}_s^2}{t_s^3} \right) - \frac{\dot{t}_s
\langle \dot{x}_s, x-x_s \rangle}{t^2_s} + \frac{\langle \ddot{x}_s , x-x_s
\rangle}{2t_s} - \frac{|\dot{x}_s|^2}{2t_s}
\end{split} \\
\begin{split}
\mathsf{g}^2_s &= \left( - \frac{m \dot{t}_s}{2 t_s}+ \frac{\dot{t}_s |x -
x_s|^2}{4 t_s^2} + \frac{\langle \dot{x}_s, x - x_s \rangle}{2 t_s} \right)^2\\
&= \frac{m^2 \dot{t}_s^2}{4 t_s^2} - \frac{m \dot{t}_s^2 |x-x_s|^2}{4 t_s^3} -
\frac{m \dot{t}_s \langle \dot{x}_s , x-x_s \rangle}{2 t_s^2} + \frac{\dot{t}_s
|x-x_s|^2 \langle \dot{x}_s, x-x_s \rangle}{4 t_s^3} \\
& \hsp + \frac{\dot{t}_s^2 |x-x_s|^4}{16 t_s^4} + \frac{\langle \dot{x}_s, x -
x_s \rangle^2}{4 t_s^2}. \label{eq:g2id}
\end{split}
\end{align}

\begin{cor}[\cite{Zhang} Proposition 3]\label{prop:1stvar}
Let $f_s$, $x_s$, $t_s$ denote the variations of $f$, $x_0$, and $t_0$
respectively, and set
\begin{equation*}
\dot{t}_s = \frac{d t_s}{ds}, \ \dot{x}_s = \frac{d x_s}{ds}, \ \dot{f}_s =
\frac{d f_s}{ds}.
\end{equation*}
Assuming that $f_s \in H_{loc}^1(\bRm, N)$ and $\dot{f}_s$ satisfies the
integrability condition
\begin{equation*}
\int_{\bRm} \left( |\dot{f}_s |^2 + | \N \dot{f}_s |^2 + |x-x_s|^2 | T f_s |^2 +
| \tau_s |^2 \right) G_s dV < +\infty.
\end{equation*}
Then
\begin{gather}\label{eq:1stvar}
\begin{split}
\frac{d}{ds} \lb \mathcal{F}_{x_s, t_s} (f_s) \rb
& = - t_s \int_{\bRm} \left\langle \dot{f}_s, \mathsf{S}_{x_s,t_s}(f_s)
\right\rangle G_s dV \\
& \hsp + \dot{t}_s \int_{\bRm} \left( \frac{2 -m}{4} + \frac{|x-x_s|^2}{8 t_s}
\right) | T f_s |^2 G_s dV\\
& \hsp + \frac{1}{4} \int_{\bRm} \left\langle \dot{x}_s, x-x_s \right\rangle |T
f_s|^2 G_s dV.
\end{split}
\end{gather}

\begin{proof} By appealing to Proposition \ref{prop:1stvarM} and apply the
identities of Lemma \ref{lem:Gderivs} we obtain
\begin{align*}
\tfrac{d}{ds}\brk{ \mathcal{F}_{x_s,t_s}(f_s) }
&= \tfrac{1}{2} \int_{\mathbb{R}^m} \prs{ \dot{t}_s + t_s \mathsf{g}_s } \brs{T
f_s}^2  G_s dV - t_s \int_{\mathbb{R}^m} \ip{ \dot{f}_s,
\mathsf{S}_{x_s,t_s}(f_s) } G_s dV.
\end{align*}
Simply applying the definition of $\mathsf{g}_s$, \eqref{eq:defsfg}, and
rearranging terms gives the result.
\end{proof}
\end{cor}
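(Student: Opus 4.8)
The strategy is to obtain \eqref{eq:1stvar} as a direct specialization of the abstract first variation formula of Proposition \ref{prop:1stvarM} to the Euclidean source $(M,g)=(\bRm,g_{\Euc})$. Along the given family I would set $\upsilon_s:=t_s$ and $\theta_s:=\tfrac{\brs{x-x_s}^2}{4t_s}$, so that the associated weight $\Theta_s=(4\pi\upsilon_s)^{-m/2}e^{-\theta_s}$ is precisely the time-zero heat kernel $G_s=G_{x_s,t_s}(\cdot,0)$ based at $(x_s,t_s)$, and $\mathcal{F}(f_s,\upsilon_s,\theta_s)=\mathcal{F}_{x_s,t_s}(f_s)$ by construction. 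From \eqref{eq:primidG} one has $\tfrac{\N\Theta_s}{\Theta_s}=\tfrac{\N G_s}{G_s}=-\tfrac{x-x_s}{2t_s}$, which will be used repeatedly below.

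With this dictionary fixed, Proposition \ref{prop:1stvarM} combined with Lemma \ref{lem:Gderivs} (which gives $\dot{\Theta}_s=\dot{G}_s=\mathsf{g}_s G_s$, hence $\tfrac{\dot{\Theta}_s}{\Theta_s}=\mathsf{g}_s$) yields
\begin{align*}
\frac{d}{ds}\brk{\mathcal{F}_{x_s,t_s}(f_s)}=\frac{1}{2}\int_{\bRm}\prs{\dot{t}_s+t_s\,\mathsf{g}_s}\brs{Tf_s}^2\,G_s\,dV-t_s\int_{\bRm}\ip{\dot{f}_s,\,\mathsf{S}_{\Theta_s}(f_s)}\,G_s\,dV.
\end{align*}
Because $\tfrac{\N\Theta_s}{\Theta_s}=-\tfrac{x-x_s}{2t_s}$, the soliton operators agree,
\begin{align*}
\mathsf{S}_{\Theta_s}(f_s)=\tau_s+\prs{\tfrac{\N\Theta_s}{\Theta_s}\hook Tf_s}=\tau_s-\prs{\tfrac{x-x_s}{2t_s}\hook Tf_s}=\mathsf{S}_{x_s,t_s}(f_s),
\end{align*}
so the second integral already has the form appearing in \eqref{eq:1stvar}. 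For the first integral I would insert the explicit formula \eqref{eq:defsfg} for $\mathsf{g}_s$ and regroup, using
\begin{align*}
\tfrac{1}{2}\prs{\dot{t}_s+t_s\,\mathsf{g}_s}=\dot{t}_s\prs{\tfrac{2-m}{4}+\tfrac{\brs{x-x_s}^2}{8t_s}}+\tfrac{1}{4}\ip{\dot{x}_s,\,x-x_s},
\end{align*}
which produces exactly the two remaining terms of \eqref{eq:1stvar}.

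The algebra above is routine once the identifications are in place, so the only point demanding genuine care is the verification that the integrability hypothesis assumed here is strong enough to invoke Proposition \ref{prop:1stvarM}. Since $\brs{\tfrac{\N\Theta_s}{\Theta_s}}^2\brs{Tf_s}^2=\tfrac{1}{4t_s^2}\brs{x-x_s}^2\brs{Tf_s}^2$, and since, splitting the domain into $\{\brs{x-x_s}\geq1\}$ and $\{\brs{x-x_s}\leq1\}$, the bare term $\brs{Tf_s}^2G_s$ is integrable by combining the assumed control of $\brs{x-x_s}^2\brs{Tf_s}^2G_s$ with the $H^1_{loc}$ regularity of $f_s$, the hypothesis of Proposition \ref{prop:1stvarM} follows; this also legitimizes the integrations by parts and dominated-convergence passages embedded in its proof. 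This is where I expect whatever modest difficulty there is to lie.
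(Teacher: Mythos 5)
Your proposal is correct and follows essentially the same route as the paper: specialize Proposition \ref{prop:1stvarM} to the heat-kernel weight $\Theta_s=G_s$, use Lemma \ref{lem:Gderivs} to identify $\tfrac{\dot{\Theta}_s}{\Theta_s}=\mathsf{g}_s$ and the operators $\mathsf{S}_{\Theta_s}=\mathsf{S}_{x_s,t_s}$, then expand $\mathsf{g}_s$ via \eqref{eq:defsfg} and regroup. Your explicit regrouping identity and the remark on verifying the integrability hypothesis are simply the details the paper leaves implicit.
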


One can conclude the following statement from this formulation combined with the
identities (a) and (b) of Corollary \ref{cor:lem3zhang} applied to
\eqref{eq:1stvar}.

\begin{cor}[\cite{Zhang} Corollaries 1, 2]\label{cor:critpoints} The point
$(\N, x_0, t_0)$ is a critical point of the $\mathcal{F}$-functional if and only
if $\N$ is an $(x_0,t_0)$-soliton.
\end{cor}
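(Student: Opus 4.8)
The plan is to read off both implications directly from the first variation formula \eqref{eq:1stvar} of Corollary \ref{prop:1stvar}, using the soliton integral identities of Corollary \ref{cor:lem3zhang} (equivalently, Corollary \ref{cor:solitonids} with coinciding basepoints) to kill the two basepoint terms. Write $f$ for the map appearing in the statement, and assume throughout the mild regularity (polynomial energy growth) needed to legitimize \eqref{eq:1stvar} and the identities invoked below.

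For the ``if'' direction, suppose $f \in \mathfrak{S}_{x_0,t_0}$. Then by definition $\mathsf{S}_{x_0,t_0}(f) \equiv 0$, so the first term on the right-hand side of \eqref{eq:1stvar} vanishes for every admissible variation $\dot f_s$. For the $\dot{t}_s$ and $\dot{x}_s$ terms, apply the soliton identities with the heat-kernel basepoint equal to the soliton basepoint: after the coordinate shift identifying $\mathfrak{S}_{x_0,t_0}$ with $\mathfrak{S}$ — or directly from Corollary \ref{cor:solitonids} with $(\chi,\gs) = (x_0,t_0)$, where the vector $\tfrac{t_0}{\gs}\chi - x_0 + x(1 - \tfrac{t_0}{\gs})$ vanishes identically — identity $(\mathrm{a})$ of Corollary \ref{cor:lem3zhang} gives $\int_{\bRm}\prs{\tfrac{2-m}{4} + \tfrac{|x-x_0|^2}{8t_0}}|Tf|^2 G_0\, dV = 0$ and identity $(\mathrm{b})$ gives $\int_{\bRm}(x-x_0)^{\gamma}|Tf|^2 G_0\, dV = 0$ for each $\gamma$. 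Hence all three terms of \eqref{eq:1stvar} vanish, so $\tfrac{d}{ds}\brk{\mathcal{F}_{x_s,t_s}(f_s)}$ vanishes at $s = 0$ for every variation of $(f, x_0, t_0)$; that is, $(f, x_0, t_0)$ is a critical point.

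For the ``only if'' direction, suppose $(f, x_0, t_0)$ is a critical point. Restrict to variations fixing the basepoint, $\dot{x}_s \equiv 0$ and $\dot{t}_s \equiv 0$, and let $\dot{f}$ range over arbitrary compactly supported sections of $f^*(TN)$. For such variations \eqref{eq:1stvar} collapses to $0 = -t_0\int_{\bRm}\ip{\dot{f}, \mathsf{S}_{x_0,t_0}(f)}\, G_0\, dV$. Since $t_0 > 0$ and $G_0 > 0$ everywhere, the fundamental lemma of the calculus of variations forces $\mathsf{S}_{x_0,t_0}(f) \equiv 0$, i.e. $f \in \mathfrak{S}_{x_0,t_0}$.

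The only steps requiring care are bookkeeping ones: checking that polynomial energy growth is exactly the hypothesis under which both \eqref{eq:1stvar} and the identities of Corollary \ref{cor:lem3zhang} apply, and verifying the elementary arithmetic by which the $(\chi,\gs) = (x_0,t_0)$ specializations of $(\mathrm{a})$ and $(\mathrm{b})$ annihilate precisely the $\dot{t}_s$ and $\dot{x}_s$ contributions. Conceptually there is no obstacle: the content is simply that a soliton is automatically stationary in the basepoint directions, so stationarity in the map variable alone already encodes the full soliton equation.
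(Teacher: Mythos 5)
Your proposal is correct and is precisely the argument the paper intends: the paper gives no written proof beyond the remark that the corollary follows from the first variation formula \eqref{eq:1stvar} together with identities (a) and (b) of Corollary \ref{cor:lem3zhang}, and you have filled in exactly that reasoning (soliton $\Rightarrow$ the $\mathsf{S}$-term vanishes and (a), (b) kill the basepoint terms; critical $\Rightarrow$ restrict to $\dot{x}_s=\dot{t}_s=0$ and apply the fundamental lemma). Your care in deriving (a) and (b) from Corollary \ref{cor:solitonids} with $(\chi,\gs)=(x_0,t_0)$, where the extra vector vanishes identically, is a welcome clarification of the ``coinciding basepoints'' convention that the paper leaves implicit.
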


\begin{cor} \label{lem:2ndvar}
Assuming that $f_s \in H_{loc}^1(\bRm \times I, N)$ and for all $s \in I$,
$\dot{f}_s$ satisfies the integrability condition
\begin{equation*}
\int_{\bRm} \left( |\dot{f}_s |^2 + | \N \dot{f}_s |^2 + |x-x_s|^2 | T f_s |^2 +
| \tau_s |^2 \right) G_s dV < + \infty.
\end{equation*}
Then
\begin{align*}
\frac{d^2}{ds^2}\brk{\mathcal{F}_{x_s,t_s}(f_s)}
&=- t_s \int_{\bRm} \ip{ \dot{f}_s, \lap \dot{f}_s +
R^N(\dot{f},(Tf_s)_{\ga})(Tf_s)_{\ga}) - \N_{\frac{x-x_s}{2 t_s}} \dot{f}_s }
G_s dV\\
& \hsp  - \int_{\bRm} \ip{ \dot{f}, (2 t_s \mathsf{g}+\dot{t}_s)
\mathsf{S}_{x_s,t_s}(f_s) + \prs{\dot{t}_s(x-x_s)+\dot{x} } \hook  Tf_s  } G_s
dV\\
& \hsp + \tfrac{1}{2} \int_{\bRm} \prs{ t_s \prs{\dot{\mathsf{g}_s} +
\mathsf{g}^2_s} + 2 \dot{t}_s \mathsf{g} + \ddot{t}_s } \brs{Tf_s}^2 G_s dV_g -
t_s\int_{\bRm} \ip{ \ddot{f}_s, \mathsf{S}_{x_s,t_s}(f_s) } G_s dV.
\end{align*}
\begin{proof}
We start by applying the identities of Lemma \ref{lem:Gderivs} and
\eqref{eq:defsfg} directly to the formula of Proposition \ref{prop:2ndvarM}.
\begin{align}
\begin{split}
\tfrac{d^2}{ds^2}\brk{\mathcal{F}_{x_s,t_s}(f_s)}
&=\frac{\ddot{t}_s}{2}\int_{\bRm} |Tf_s|^2 G_s dV- t_s\int \ip{ \ddot{f}_s,
\mathsf{S}_{x_s,t_s}(f_s) } G_s dV+\frac{t_s}{2}\int_{\bRm} |Tf_s|^2\prs{
\dot{\mathsf{g}}_s + \mathsf{g}^2_s} G_s dV\\
&\hsp -\dot{t}_s\int_{\bRm} \prs{\ip{ \dot{f}_s,\mathsf{S}_{x_s,t_s}(f)}
-|Tf_s|^2 \mathsf{g}_s  } G_s dV\\
& \hsp -t_s\int_{\bRm} \ip{ \dot{f}_s,(\lap
\dot{f}_s+R^N(\dot{f}_s,(Tf_s)_{\ga})(Tf_s)_{\ga}) - \N_{\frac{x-x_s}{2t_s}}
\dot{f}_s } G_s dV \\
& \hsp - 2 t_s \int_{\bRm} \ip{ \dot{f}_s, \tau_s \mathsf{g}_s +\prs{\frac{\N
\prs{\mathsf{g}_s G_s}}{G_s} \hook Tf_s} } G_s dV.
\end{split}
\end{align}
We expand the quantity of the last line.
\begin{align*}
 \int_{\bRm} \ip{ \dot{f},\tau \mathsf{g} +  \prs{\tfrac{\N\prs{\mathsf{g}
G}}{G}} \hook Tf } G_s dV
&= \int_{\mathbb{R}^m} \ip{ \dot{f}, \tau \mathsf{g} + Tf \prs{ (\N \mathsf{g})
+ \mathsf{g} \prs{\tfrac{\N G}{G}}}} G_s dV \\
&= \int_{\mathbb{R}^m} \ip{ \dot{f}, \tau \mathsf{g} + \prs{ \tfrac{\dot{t}_s}{2
t_s}(x-x_s) + \tfrac{\dot{x}_s}{2 t_s} - \mathsf{g} \prs{\tfrac{x-x_s}{2 t_s} }}
\hook Tf} G_s dV \\
&= \int_{\mathbb{R}^m} \ip{ \dot{f}, \mathsf{g} \mathsf{S}_{x_s,t_s}(f) +
\tfrac{1}{2 t_s} \prs{\dot{t}_s(x-x_s)+\dot{x} }\ \hook Tf  } G_s dV.
\end{align*}
Then we regroup terms and obtain the result.
\end{proof}
\end{cor}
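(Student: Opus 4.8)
The plan is to derive the stated identity by specializing the general second variation formula of Proposition~\ref{prop:2ndvarM} to the Euclidean source with Gaussian weight. Concretely, one takes $\upsilon_s = t_s$ and $\theta_s = \tfrac{|x-x_s|^2}{4 t_s}$, so that $\Theta_s = (4\pi t_s)^{-m/2} e^{-|x-x_s|^2/(4t_s)} = G_s$ and $\mathcal{F}(f_s,\upsilon_s,\theta_s) = \mathcal{F}_{x_s,t_s}(f_s)$. All of the analytic content — the integrations by parts and the removal of a cutoff by dominated convergence — is already contained in Proposition~\ref{prop:2ndvarM}, and the integrability hypothesis imposed on $\dot f_s$ is exactly what is needed to apply it. Thus the task reduces to substituting the explicit $s$- and $x$-derivatives of $G_s$ and collecting terms.

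First I would record the identities needed. From \eqref{eq:primidG} one has $\tfrac{\N G_s}{G_s} = -\tfrac{x-x_s}{2 t_s}$, so the operator $\mathsf{S}_{\Theta_s}$ appearing in \eqref{eq:2ndvarM} coincides with $\mathsf{S}_{x_s,t_s}$ and the drift term $\N_{\N\Theta_s/\Theta_s}\dot f_s$ becomes $-\N_{(x-x_s)/(2t_s)}\dot f_s$. From Lemma~\ref{lem:Gderivs}, $\dot{G}_s = \mathsf{g}_s G_s$ and $\ddot{G}_s = (\dot{\mathsf{g}}_s + \mathsf{g}_s^2) G_s$, hence $\tfrac{\dot{\Theta}_s}{\Theta_s} = \mathsf{g}_s$ and $\tfrac{\ddot{\Theta}_s}{\Theta_s} = \dot{\mathsf{g}}_s + \mathsf{g}_s^2$. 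Differentiating \eqref{eq:defsfg} in $x$ gives $\N\mathsf{g}_s = \tfrac{\dot t_s}{2 t_s}(x-x_s) + \tfrac{\dot x_s}{2 t_s}$.

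Substituting these termwise into \eqref{eq:2ndvarM}, the line carrying the prefactor $\ddot\upsilon_s$, the $\ddot f_s$ term, the line carrying the prefactor $\dot\upsilon_s$, and the Jacobi term $\lap\dot f_s + R^N(\dot f_s,(Tf_s)_\ga)(Tf_s)_\ga$ transcribe directly, with $\dot t_s$, $\ddot t_s$, $\mathsf{g}_s$, and $\dot{\mathsf{g}}_s + \mathsf{g}_s^2$ replacing the corresponding $\Theta$-quantities. The one place needing a genuine (but purely algebraic) simplification is the last line of \eqref{eq:2ndvarM}, namely $-2\upsilon_s\int_{\bRm}\ip{\dot f_s,\ \tau_s\tfrac{\dot{\Theta}_s}{\Theta_s} + \prs{\tfrac{\N \dot{\Theta}_s}{\Theta_s}\hook Tf_s}}\Theta_s\, dV$: here I would expand $\tfrac{\N \dot{\Theta}_s}{\Theta_s} = \tfrac{\N(\mathsf{g}_s G_s)}{G_s} = \N\mathsf{g}_s + \mathsf{g}_s\tfrac{\N G_s}{G_s}$, insert the formulas just recorded, and recombine the bracket as $\mathsf{g}_s\,\mathsf{S}_{x_s,t_s}(f) + \tfrac{1}{2t_s}\prs{\dot t_s(x-x_s)+\dot x_s}\hook Tf_s$. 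Multiplying by $-2t_s$ then produces the $-2t_s\mathsf{g}_s$ contribution to the coefficient of $\mathsf{S}_{x_s,t_s}(f)$ and the clean linear term $-\prs{\dot t_s(x-x_s)+\dot x_s}\hook Tf_s$.

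Finally I would collect: the $\mathsf{S}_{x_s,t_s}(f)$-terms (the $-\dot t_s$ from the $\dot\upsilon_s$-line and the $-2t_s\mathsf{g}_s$ just found) combine to coefficient $-(2t_s\mathsf{g}_s + \dot t_s)$; the $|Tf_s|^2$-terms combine to $\tfrac{1}{2}\prs{t_s(\dot{\mathsf{g}}_s + \mathsf{g}_s^2) + 2\dot t_s\mathsf{g}_s + \ddot t_s}$; the Jacobi-plus-drift term retains coefficient $-t_s$; and the $\ddot f_s$ term retains coefficient $-t_s$. This is exactly the asserted formula. I expect the only real difficulty to be the sign-and-factor bookkeeping through the substitution, and in particular the regrouping of the $\N\mathsf{g}_s$ piece into $\mathsf{S}_{x_s,t_s}(f)$; there is no new analytic obstacle beyond the integrability condition that Proposition~\ref{prop:2ndvarM} already requires.
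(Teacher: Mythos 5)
Your proposal is correct and follows essentially the same route as the paper: specialize Proposition \ref{prop:2ndvarM} to $\Theta_s = G_s$, invoke Lemma \ref{lem:Gderivs} and the formula for $\N \mathsf{g}_s$, expand $\tfrac{\N(\mathsf{g}_s G_s)}{G_s} = \N\mathsf{g}_s + \mathsf{g}_s \tfrac{\N G_s}{G_s}$ to recombine the last line into $\mathsf{g}_s \mathsf{S}_{x_s,t_s}(f) + \tfrac{1}{2t_s}\prs{\dot t_s(x-x_s)+\dot x_s}\hook Tf_s$, and regroup. The bookkeeping you describe, including the combined coefficients $-(2t_s\mathsf{g}_s+\dot t_s)$ and $\tfrac12\prs{t_s(\dot{\mathsf{g}}_s+\mathsf{g}_s^2)+2\dot t_s\mathsf{g}_s+\ddot t_s}$, matches the paper's computation exactly.
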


Evaluating the previous variation at a soliton allows us to conclude the formula
for the $L^f$ operator, which will be key in the following computations. This
will be given by
\begin{gather} \label{Lfdef}
 \begin{split}
L^f &: \gG \prs{ f^*(TN) } \to \gG \prs{ f^*(TN) } \\
& : X \mapsto \left[ -  \lap X - R (X, Tf_{\ga}) Tf_{\ga} + \frac{\N_{(x-x_0)}
X}{2 t_0}\right].  
 \end{split}
\end{gather}
We also set
\begin{equation} \label{Wdef}
W_f^{2,2} := \left\{ X \in \gG(f^* TN) : \int_{\mathbb{R}^m} \prs{ \brs{X}^2 +
\brs{\N X}^2 + \brs{L^f X}^2 }G_0 dV_g \right\}.
\end{equation}

\begin{prop}[\cite{Zhang} Proposition 5]\label{prop:2ndvarsol} Let $f$ be a
soliton and $\dot{f} =
\dot{f}^{\ga} \del_{\ga}$ be some vector field on $N$ such that $\dot{f} \in
W_f^{2,2}$.
Let $f_s$ be some one parameter family with $f_0 = f$, and $\dot{t_0}$ and
$\dot{x_0}$ basepoint variations. Then
\begin{align*}
\mathcal{F}''(\dot{t},\dot{x},\dot{f}) &= \frac{d^2}{ds^2} \left. \lb
\mathcal{F}_{x_s, t_s}(f_s) \rb \right|_{s=0} \\
&=  t_0 \int_{\bRm} \left\langle \dot{f}, L^f \dot{f} - 2 \dot{t}_0 \tau -
(\dot{x}_0 \hook T f)  \right\rangle G_0 dV - \int_{\bRm}  \left( \dot{t}_0^2
\left| \tau  \right|^2 + \frac{1}{2}
\left| \dot{x}_0 \hook T f  \right|^2 \right) G _0 dV.
\end{align*}

\begin{proof}
Since $\mathsf{S}_{x_0,t_0}(f) = 0$ and thus $Tf_{\ga} (x-x_0)^{\ga} =2 t_0
\tau$,
\begin{align}
\begin{split}\label{eq:2ndvarsol1}
\left. \frac{d^2}{ds^2} \lb \mathcal{F}_{x_s,t_s} (f) \rb \right|_{t=0}
&= - t_s \int_{\bRm} \left\langle \dot{f} , \lap \dot{f}    +     R(\dot{f}, T
f_{\ga}) T f_{\ga}  - \frac{1}{2 t_0}  \N_{(x-x_0)} \dot{f} \right\rangle G dV 
\\
& \hsp -  \int_{\bRm}{ \left\langle \dot{f}, 2 \dot{t}_0 \tau  + \dot{x}_0 \hook
T f \right\rangle G dV} \\
& \hsp   + \frac{1}{2} \int_{\bRm} \left( t_0(\dot{\mathsf{g}} + \mathsf{g}^2) +
\ddot{t}_0 + 2 \dot{t}_0 \mathsf{g} \right) | Tf|^2 G dV.
\end{split}
\end{align}

We expand, marking with the applicable identities of Corollary
\ref{cor:solitonids},
\begin{align*}
\dot{\mathsf{g}} + \mathsf{g}^2&=
\left(  - \frac{m \ddot{t}_0}{2 t_0} + \frac{m \dot{t}_0^2}{2 t_0^2} + \frac{m^2
\dot{t}_0^2}{4 t_0^2} \right) + \left( \frac{|x-x_0|^2}{4 t_0} \left(
\frac{\ddot{t}_0}{t_0} - \frac{m \dot{t}_0^2}{t_0^2} - \frac{2
\dot{t}_0^2}{t_0^2} \right) \right)_a + \left( \langle \dot{x}_0, x-x_0 \rangle
\left( - \frac{\dot{t}_0}{t^2_0} - \frac{m \dot{t}_0}{2 t_0^2} \right) \right)_b
\\
& \hsp  + \left( |x-x_0|^2 \langle \dot{x}_0, x-x_0 \rangle \left(
\frac{\dot{t}_0}{4 t_0^3} \right) \right)_d + \left( \langle \ddot{x}_0, x-x_0
\rangle \frac{1}{2t_0} \right)_b + \left( |\dot{x}_0|^2 \left( \frac{-1}{2t_0}
\right) \right)\\
& \hsp + \left( |x-x_0|^4 \left( \frac{\dot{t}_0^2 }{16 t_0^4} \right) \right)_c
+ \left( \langle \dot{x}_0, x - x_0 \rangle^2 \left(  \frac{1}{4 t_0^2} \right)
\right)_e.
\end{align*}
Therefore we have
\begin{align*}
\int_{\bRm}{ \left( \dot{\mathsf{g}} + \mathsf{g}^2 \right)| Tf|^2 G dV}
&=  \int_{\bRm} \left(  - \frac{m \ddot{t}_0}{2 t_0} + \frac{m \dot{t}_0^2}{2
t_0^2} + \frac{m^2 \dot{t}_0^2}{4 t_0^2} \right) |Tf |^2 G dV  \\
& \hsp +  \int_{\bRm}\left( \frac{(m-2)}{2} \left( \frac{\ddot{t}_0}{t_0} - 
\frac{m \dot{t}_0^2}{t_0^2} - \frac{2 \dot{t}_0^2}{t_0^2} \right) \right) |Tf
|^2 G dV  \\
& \hsp   - \int_{\bRm}   \left( \frac{|\dot{x}_0|^2}{2t_0} \right)  |Tf |^2 G dV
+\left(  \frac{1}{4 t_0^2} \right) 2 t_0 \int_{\bRm} \left( |\dot{x}_0|^2 |Tf
|^2 - 2 \brs{ \dot{x}_0 \hook T f }^2 \right) G dV\\
& \hsp +  \left( \frac{\dot{t}_0^2 }{16 t_0^2} \right) \left( \int_{\bRm} \left(
4m(m-2)  | Tf |^2 - 32 t_0 | \tau |^2 \right) G dV \right) \\
&= \int_{\bRm}\left(  \left(  - \frac{m \ddot{t}_0}{2 t_0} + \frac{m
\dot{t}_0^2}{2 t_0^2} + \frac{m^2 \dot{t}_0^2}{4 t_0^2} \right) +
\frac{(m-2)}{2} \left( \frac{\ddot{t}_0}{t_0} -  \frac{m \dot{t}_0^2}{t_0^2} -
\frac{2 \dot{t}_0^2}{t_0^2} \right) \right) | Tf |^2 G dV\\
& \hsp + \int_{\bRm}{\left(m (m-2) \frac{\dot{t}_0^2 }{4 t_0^2} \right)  | Tf
|^2 G dV} \\
& \hsp + \int_{\bRm}\left( -\frac{2 \dot{t}_0^2}{ t_0}|\tau|^2 -\frac{1}{ 
t_0}\left( \brs{  \dot{x}_0 \hook T f }^2 \right)\right)G dV.
\end{align*}
We combine the non spatial dependent coefficients in the integrand of the form
$\int_{\bRm}{( \cdot ) | Tf |^2 G_0 dV}$ and obtain
\begin{align*}
\left(- \frac{\ddot{t}_0}{t_0} + \frac{2 \dot{t}_0^2}{t_0^2}  \right) + m \left(
\frac{- \ddot{t}_0}{2 t_0} + \frac{\dot{t}_0^2}{2t_0^2} +
\frac{\ddot{t}_0}{2t_0} - \frac{ \dot{t}_0^2}{t_0^2} +
\frac{\dot{t}_0^2}{t_0^2}- \frac{\dot{t}_0^2}{2 t_0^2} \right) + m^2 \left(
\frac{\dot{t}_0^2}{4 t_0^2} - \frac{\dot{t}_0^2}{2 t_0^2} + \frac{\dot{t}_0^2}{4
t_0^2} \right).
\end{align*}
Thus the coefficients in front of $m$ and $m^2$ vanish accordingly. Therefore we
conclude that
\begin{align*}
\frac{t_0}{2} \int_{\bRm} (\dot{\mathsf{g}} + \mathsf{g}^2) | Tf|^2 G dV =
\int_{\bRm} \left(- \frac{\ddot{t}_0}{2} + \frac{\dot{t}_0^2}{t_0}  \right)
|Tf|^2 G dV  - \int_{\bRm}\left( \dot{t}_0^2|\tau|^2 + \frac{1}{2}\left( \brs{ 
\dot{x}_0 \hook T f}^2 \right)\right)G dV.
\end{align*}
Considering the last line of \eqref{eq:2ndvarsol1} and incorporating identities
(a) and (b) of Corollary \ref{cor:lem3zhang} yields
\begin{align*}
\frac{1}{2}\int_{\bRm} (t_0 (\dot{\mathsf{g}} + \mathsf{g}^2) + \ddot{t}_0 +
2\dot{t}_0 \mathsf{g}) | Tf |^2 G dV &=  \int_{\bRm} \left(
-\frac{\ddot{t}_0}{2} + \frac{\dot{t}_0^2}{t_0} + \frac{\ddot{t}_0}{2}
+\dot{t}_0 \mathsf{g}_0 \right) | Tf |^2 G dV \\
& \hsp - \int_{\bRm}\left( \dot{t}_0^2|\tau|^2 +\frac{1}{2}\left( \brs{ 
\dot{x}_0 \hook T f}^2 \right)\right)G dV \\
&=  \int_{\bRm} \left( \frac{(2- m) \dot{t}_0^2}{2 t_0} + \frac{\dot{t}_0^2
|x-x_0|^2}{4 t_0^2} + \dot{t}_0 \frac{\langle \dot{x}_0 , x-x_0 \rangle}{2 t_0} 
\right) | Tf |^2 G dV \\
& \hsp - \int_{\bRm}\left(  \dot{t}_0^2|\tau|^2 +\frac{1}{2}\left( \brs{ 
\dot{x}_0 \hook T f}^2 \right)\right)G dV \\
&=  - \int_{\bRm}\left(  \dot{t}_0^2|\tau|^2 +\frac{1}{2}\brs{  \dot{x}_0 \hook
T f}^2  \right)G dV.
\end{align*}
Combining everything we conclude the result.
\end{proof}
\end{prop}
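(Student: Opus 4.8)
The plan is to specialize the general Euclidean second--variation formula of Corollary \ref{lem:2ndvar} to the case where the base point of the variation is $s=0$ and $f_0=f$ is an $(x_0,t_0)$-soliton, and then to convert the left--over terms into the stated form using the soliton identities of Corollaries \ref{cor:solitonids} and \ref{cor:lem3zhang}. First I would check that the integrability hypotheses of Corollary \ref{lem:2ndvar} are satisfied: the assumption $\dot f\in W_f^{2,2}$ together with the defining relation $\tau=\tfrac{x-x_0}{2t_0}\hook Tf$ of a soliton (which controls $|\tau|$ in terms of $|x-x_0|\,|Tf|$) guarantees that every integral below is finite and that the integrations by parts built into Corollary \ref{lem:2ndvar} are legitimate.

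Next, evaluating the formula of Corollary \ref{lem:2ndvar} at $s=0$, the soliton equation $\mathsf{S}_{x_0,t_0}(f)=0$ does three things simultaneously: it annihilates the term $-t_s\int\langle\ddot f_s,\mathsf{S}_{x_s,t_s}(f_s)\rangle G_s\,dV$, it annihilates the factor $(2t_s\mathsf{g}+\dot t_s)\mathsf{S}_{x_s,t_s}(f_s)$ inside the second integral, and it yields the pointwise identity $Tf_\ga\,(x-x_0)^\ga=2t_0\tau$. The first integral then reads $-t_0\int\langle\dot f,\lap\dot f+R(\dot f,Tf_\ga)Tf_\ga-\N_{(x-x_0)/2t_0}\dot f\rangle G_0\,dV$, which is exactly $t_0\int\langle\dot f,L^f\dot f\rangle G_0\,dV$ by the definition \eqref{Lfdef} of $L^f$. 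The surviving part of the second integral, $-\int\langle\dot f,(\dot t_0(x-x_0)+\dot x_0)\hook Tf\rangle G_0\,dV$, splits via $Tf_\ga(x-x_0)^\ga=2t_0\tau$ into a $-2t_0\dot t_0\int\langle\dot f,\tau\rangle G_0\,dV$ contribution and a $\dot x_0\hook Tf$ contribution.

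The substantive computation is the third line, namely evaluating $\tfrac12\int_{\bRm}\bigl(t_0(\dot{\mathsf g}+\mathsf g^2)+2\dot t_0\mathsf g+\ddot t_0\bigr)|Tf|^2 G_0\,dV$ at the soliton. Here I would expand $\dot{\mathsf g}$ and $\mathsf g^2$ using \eqref{eq:delgid} and \eqref{eq:g2id}, organize the result according to its spatial weights --- the pure constants, the factors $|x-x_0|^2$, $|x-x_0|^4$, $\langle\dot x_0,x-x_0\rangle$, $|x-x_0|^2\langle\dot x_0,x-x_0\rangle$, and $\langle\dot x_0,x-x_0\rangle^2$ --- and then substitute for each weighted integral the matching soliton identity among Corollary \ref{cor:lem3zhang}(a),(c),(e) and Corollary \ref{cor:solitonids}; identity (a), which trades $|x-x_0|^2|Tf|^2$ for $|Tf|^2$, is used repeatedly. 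The mechanism of the proof is that after these substitutions the coefficients multiplying $m$ and $m^2$ cancel identically and the $\ddot t_0$ term drops out, leaving precisely $-\int_{\bRm}\bigl(\dot t_0^2|\tau|^2+\tfrac12|\dot x_0\hook Tf|^2\bigr)G_0\,dV$. Summing the three contributions produces the asserted formula.

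The only real obstacle I anticipate is this last bookkeeping step: the expansion of $\dot{\mathsf g}+\mathsf g^2$ produces many terms carrying different spatial weights, and one must both pair each with the correct soliton identity and verify carefully that the $m$- and $m^2$-proportional pieces truly cancel. Everything else --- recognizing $L^f$, discarding the $\mathsf S_{x_0,t_0}(f)$ terms, and the integrability check --- is routine once the soliton relation is in hand.
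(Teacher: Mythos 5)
Your proposal is correct and follows essentially the same route as the paper: specialize Corollary \ref{lem:2ndvar} at the soliton so that the $\mathsf{S}_{x_0,t_0}(f)$ terms drop and the first integral becomes $t_0\int\langle\dot f,L^f\dot f\rangle G_0\,dV$, then reduce the $\tfrac12\int(t_0(\dot{\mathsf g}+\mathsf g^2)+2\dot t_0\mathsf g+\ddot t_0)|Tf|^2G_0\,dV$ term by sorting the expansion of $\dot{\mathsf g}+\mathsf g^2$ by spatial weight and substituting the identities of Corollaries \ref{cor:solitonids} and \ref{cor:lem3zhang}, after which the $m$- and $m^2$-coefficients cancel. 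This is exactly the paper's argument, including the identification of the weighted-integral bookkeeping as the only substantive step.
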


\section{Entropy stability and
\texorpdfstring{$\FF$}{F}-stability}\label{s:entropystability}

In this section we prove Theorem
\ref{thm:main1}. In analogy to \cite{CM}, this shows the equivalence of $\calf$
and entropy stability for self shrinkers provided they do not split off a line
isometrically. We characterize these notions in the following definitions.

\begin{defn} \label{defn:entropstab}
A soliton $f \in \mathfrak{S}$ is \emph{entropy stable} if it is a local
minimizer of the entropy functional $\lambda$.
\end{defn}

\begin{defn} \label{def:Fstable} A soliton $f \in \mathfrak{S}$
is called $\mathcal{F}$-stable if for any $X \in W_f^{2,2}$ there exists a real
number $q \in \mathbb{R}$ and a constant vector field $V \in T\mathbb{R}^m$ such
that $\mathcal{F}{''}(q,V,X) \geq 0$, (where $\mathcal{F}''$ is as in
Proposition \ref{prop:2ndvarsol}).
\end{defn}

\begin{defn}\label{def:cylindrical} We say a map $f$ is \emph{cylindrical} if
there is a constant
vector field $\zeta \in T\mathbb{R}^m$ such that
\begin{equation*}
\zeta \hook T f  \equiv 0.
\end{equation*}
\end{defn}

We build up the proof of Theorem \ref{thm:main1} in a series of lemmas.  First,
in Lemma \ref{lem:harsoltnconst} we show that a soliton which is also a harmonic
map is necessarily constant.  Using this we show in Lemma \ref{lem:lem7.10CM}
that the point $(0,1) \in \mathbb R^m \times \mathbb R_{\geq 0}$ realizing the
supremum in the definition of $\gl$ is a \emph{strict} maximum as long as the
soliton is not cylindrical.  Moreover there is an effective estimate of the drop
in the $\FF$-functional provided the basepoint is taken a small definite
distance from $(0,1)$.  Using this estimate and an observation that the
$\FF$-functional is strictly convex along carefully chosen paths in $\mathbb R^m
\times \mathbb R_{\geq 0}$, we obtain the proof of Theorem \ref{thm:main1}.

\begin{lemma}\label{lem:harsoltnconst} Suppose $f \in \mathfrak{S}$ has
polynomial energy density growth and satisfies $\tau = 0$.  Then $f$ is a
constant map.
\end{lemma}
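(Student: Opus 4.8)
The plan is to show that the two hypotheses together force $f$ to be invariant under dilations of $\bRm$, and then to use smoothness (in fact mere continuity) of $f$ at the origin to conclude that $f$ is constant. This is the precise analogue of the classical fact that a self-shrinker for mean curvature flow which happens to be minimal is a cone through the origin, hence a hyperplane.

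\emph{Step 1: $f$ is homogeneous of degree zero.} Since $f \in \mathfrak{S}$, the soliton equation reads $\tau = \tfrac12 \prs{x \hook Tf}$, so the hypothesis $\tau = 0$ yields the pointwise identity $x^\ga \del_\ga f^i \equiv 0$ on $\bRm$; equivalently, $Tf$ annihilates the radial (Euler) vector field $x = x^\ga \del_\ga$ at every point. (Alternatively one may invoke Lemma \ref{lem:char2}: $f$ is then harmonic both for $g_{\Euc}$ and for $e^{-\brs{x}^2/(4(m-2))} g_{\Euc}$, and the conformal transformation law for the tension field proved there forces $\tfrac{x}{2} \hook Tf = 0$.)

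\emph{Step 2: radial constancy.} Fix $x_0 \in \bRm \setminus \{0\}$ and set $\gamma(t) := f(t x_0)$ for $t > 0$. Then $\gamma'(t) = x_0^\ga (\del_\ga f)(t x_0)$, and evaluating the identity of Step 1 at the point $t x_0$ and dividing by $t>0$ gives $\gamma'(t) = 0$. Hence $\gamma$ is constant on $(0,\infty)$, i.e.\ $f(t x_0) = f(x_0)$ for all $t > 0$. Letting $t \to 0^+$ and using continuity of $f$ at $0$ gives $f(x_0) = f(0)$. Since $x_0$ was arbitrary, $f$ is the constant map $x \mapsto f(0)$.

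The argument is short, and the only point requiring care is the passage $t \to 0^+$, where one genuinely uses that $f$ is defined and continuous at the origin — without this, the dilation-invariant maps $x \mapsto \psi(x/\brs{x})$ with $\psi \in C^\infty(S^{m-1},N)$ would form a large family of nonconstant "solutions"; this is where the hypothesis of $f$ being globally smooth on $\bRm$ is essential. As written, the proof uses neither the polynomial energy-density growth hypothesis nor any harmonic-map analysis beyond the soliton equation itself (the growth hypothesis is retained only for uniformity with the neighbouring lemmas). If one prefers an integral argument, one may instead feed $\del_r f \equiv 0$ into the standard harmonic-map monotonicity formula $\frac{d}{dr}\prs{r^{2-m}\int_{B_r}\brs{Tf}^2 \, dV} = 2 r^{2-m}\int_{\del B_r}\brs{\del_r f}^2 \, dA$, whose right-hand side then vanishes; since smoothness at $0$ forces $r^{2-m}\int_{B_r}\brs{Tf}^2 \, dV \to 0$ as $r \to 0$, this monotone quantity is identically zero, so $Tf \equiv 0$.
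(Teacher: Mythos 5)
Your proof is correct, and it takes a genuinely different and more elementary route than the paper's. You read the hypotheses as the single pointwise identity $x \hook Tf \equiv 0$, integrate it along rays from the origin to get that $f$ is homogeneous of degree zero, and then use continuity at $0$ to conclude constancy; your observation that this makes the polynomial growth hypothesis superfluous is accurate. The paper instead passes to the associated self-similar solution $f_t(x) = f_{-1}(x/\sqrt{-t})$ (Lemma \ref{lem:char1}): if $f$ were nonconstant, the scaling identity $Tf_t(x) = \kappa\, Tf_{\kappa^2 t}(\kappa x)$ forces $\brs{Tf_t(y\sqrt{-t})} = (-t)^{-1/2}\brs{Tf_{-1}(y)} \to \infty$ as $t \to 0$, while $\tau = 0$ together with uniqueness of solutions to \eqref{eq:HMHF} with polynomial growth implies the flow is static, so $\brs{Tf_t}$ cannot blow up --- a contradiction. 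The paper's argument fits the self-similar-solution framework used throughout \S \ref{s:euclrestr} but leans on the (nontrivial, on a noncompact domain) uniqueness statement for the flow, which is where its growth hypothesis enters; your argument avoids the flow entirely and isolates the one place where global smoothness at the origin is genuinely needed, as your remark about the dilation-invariant maps $x \mapsto \psi(x/\brs{x})$ makes clear. Your closing alternative via the harmonic-map monotonicity formula is also sound and is closer in spirit to Corollary \ref{cor:lem3zhang}(a), which the paper uses elsewhere to rule out nonconstant solitons when $m = 2$.
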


\begin{proof}
Suppose to the contrary that $f$ is a nonconstant harmonic soliton. The
corresponding self similar solution with the given $f$ as the time $-1$ slice,
as described in Lemma \ref{lem:char1}, satisfies that for all $\kappa \in
\mathbb{R}$, we have $f(x,t) = f(\kappa x,\kappa^2 t)$. Thus
\begin{equation*}
Tf_t(x) = \kappa \prs{Tf_{\kappa^2 t}(\kappa x)},
\end{equation*}
Note that
because $f$ is nontrivial there exists some $y \in \bRm$ at which the following
limit holds.
\begin{equation*}\label{eq:limofF}
\lim_{t \to 0} \brs{Tf_t \left( y \sqrt{-t}\right)} = \lim_{t \to 0}
\tfrac{1}{\sqrt{-t}} \brs{
Tf_{-1} \left( y\right) }= \infty.
\end{equation*}
In particular, $\sup_{\mathbb{R}^m \times [-1,0)} \brs{ Tf_t}
= \infty$.

Simultaneously since $\tau= 0$ and solutions to the
\eqref{eq:HMHF} on $\mathbb R^m$ with fixed initial conditions subject to the
polynomial growth condition are unique, we obtain that $\dt f_t = 0$ for
all $(x,t) \in \mathbb R^n \times [-1,0)$.  Thus for all $x \in \bRm$, we have
$\brs{Tf_t } = \brs{Tf_{-1}}$. This implies that $\sup_{x \in \mathbb{R}^n,{t
\in [-1,0)}} \brs{ Tf_t } = \sup_{x \in
\mathbb{R}^n} \brs{Tf_{-1}}  < \infty$, which is a contradiction. The result
follows.
\end{proof}

\begin{defn}\label{def:Xi} Given a one-parameter family of smooth maps $f_s \in
C^{\infty}(\bRm, N)$, let
\begin{equation*}
\Xi : \bRm \times \mathbb{R}_{\geq 0} \times I : (x,t,s) \mapsto
\mathcal{F}_{x,t}(f_s),
\end{equation*}
and moreover set $\Xi (x,t) := \Xi(x,t,0)$.
\end{defn}

\begin{lemma} \label{lem:lem7.10CM} Suppose that $f \in
\mathfrak{S}$ has polynomial energy density growth and is noncylindrical. Given
$\epsilon >0$ there exists $\delta_{\epsilon} > 0$ such that
\begin{equation}
\sup_{\{(x_0,t_0) : |x_0 | + |\log t_0 | > \epsilon \}} \mathcal{F}_{x_0,t_0}(f)
 < \la (f)
- \delta_{\epsilon}.
\end{equation}
\begin{proof}
The argument follows closely work in \cite{CM}. The proof, which demonstrates
that $(0,1)$ is a strict global
maximum, is divided into two steps. \textbf{Step 1} involves demonstrating that
$\Xi$ has a strict local maximum at $(0,1)$, by first characterizing $(0,1)$ as
a critical point and then checking its concavity. For \textbf{Step 2} we show
that $\Xi$ decreases along a family of paths spanning over the entire space-time
domain $\mathbb{R}^m \times \mathbb{R}_{\geq 0}$ which emanate from $(0,1)$.
\newline

\noindent \textbf{Step 1.} Recalling the first variation formula of Proposition
\ref{prop:1stvar}, since $f$ is a soliton then by Corollary \ref{cor:critpoints}
the gradient of $\Xi$ vanishes at $(0,1)$, thus $f$ is a critical point. Next if
we consider the second variation formula of Proposition
\ref{prop:2ndvarsol} applied to a fixed $f$ and evaluated along a path
$(sy,1+sh)$ for $s>0$ and $h \in \mathbb{R}$ we obtain
\begin{equation}\label{eq:lem7.10CM1}
\frac{d^2}{d s^2} \lb \Xi(sy,1+sh) \rb = - \int_{\bRm}  \left( h^2 \left| \tau_s
 \right|^2 + \frac{1}{2} \left| y \hook  T f_s \right|^2 \right) G_s dV.
\end{equation}
Note that \eqref{eq:lem7.10CM1} is nonpositive. The first quantity on the right
vanishes only if $h=0$ or if $f_s$ is harmonic and therefore trivial by Lemma
\ref{lem:harsoltnconst}. The second quantity on the right only vanishes if
$y \hook Tf_s = 0$, which forces $y = 0$ since $f$ is noncylindrical. Therefore
$\Xi$ has a strict local maximum at $(0,1)$. \newline

\noindent \textbf{Step 2.} We next show that for a given $y \in \bRm$ and $h \in
\mathbb{R}$, one has $\frac{d}{d s}\lb \Xi (sy, 1 + h s^2) \rb \leq 0$ for
all $s>0$ such that $1+h s^2 > 0$. Using Proposition \ref{prop:1stvar} and
replacing $x_0 \mapsto x_s$ and $t_0 \mapsto t_s$ and $G_0 \mapsto G_s$,
imposing $\tfrac{\del f_s}{\del s} \equiv 0$, and then inserting identities (a)
and (b) of Corollary \ref{cor:solitonids} we obtain
\begin{align*}
\frac{d}{ds} \lb \Xi(x_s, t_s)\rb 
 &= \int_{\bRm}   \left(    \dot{t}_s \left( \frac{2- m}{4} + \frac{
|x-x_s|^2}{8 t_s} \right) + \frac{\langle \dot{x}_s , x-x_s \rangle}{4} \right)
G_s dV \\
& =  \frac{\dot{t}_s}{8 t_s } 2 \int_{\bRm} \left( \brs{(x-x_s) \hook Tf_s }^2 -
t_s\left\langle  x \hook Tf_s,(x-x_s) \hook  Tf_s  \right\rangle \right) G_s
dV\\
& \hsp +  \frac{1}{2} \int_{\bRm}  \left\langle   \left(  -  x_s + x \left(1 - 
t_s  \right)  \right) \hook Tf_s , \dot{x}_s  \hook T f_s \right\rangle  G_s dV.
\end{align*}
Then
\begin{align*}
\left. \frac{d}{ds} \lb \Xi(x_s, t_s)\rb \right|_{x_s = sy, t_s = 1 + hs^2}
& =  \frac{2hs}{8 (1+hs^2) } 2 \int_{\bRm} \left( \brs{ (x-sy) \hook Tf_s }^2 -
t_s\left\langle x \hook Tf_s, (x-sy) \hook Tf_s \right\rangle \right) G_s dV\\
& \hsp  -  \frac{1}{2} \int_{\bRm}  \left\langle  \left(   sy +  x  hs^2 
\right)\hook Tf_s   , y \hook T f_s \right\rangle  G_s dV \\
& =  \frac{hs}{2 (1+hs^2) } \int_{\bRm} \left( |(x-sy) \hook Tf_s |^2 -
(1+hs^2)\left\langle x \hook Tf_s ,(x-sy) \hook Tf_s \right\rangle \right) G_s
dV\\
& \hsp  +  \frac{s}{2} \frac{h}{(1+hs^2)} \int_{\bRm}  \left\langle
\frac{(1+hs^2)}{h} \prs{\left(   -y -  x  hs  \right) \hook Tf_s } ,  y \hook T
f_s \right\rangle  G_s dV \\
&= - \frac{s}{2(1+hs^2)} \int_{\bRm} \left( s^2 h^2 | x \hook Tf_s  |^2  + 2sh
\left\langle x \hook Tf_s  , y \hook Tf_s \right\rangle + | y \hook  Tf_s |^2
\right)
G_s dV \\
&= - \frac{s}{(1+hs^2)} \int_{\bRm} \left| (shx + y) \hook Tf_s\right|^2 G_s dV.
\end{align*}
Since $(1+hs^2) \geq 0$ for all of the paths parametrized by $(sy,1+h s^2)$ we
have that the derivative of $\Xi$ is nonpositive on the union of these paths. We
conclude the desired result.
\end{proof}
\end{lemma}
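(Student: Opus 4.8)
The plan is to show that the basepoint $(0,1)$ is a \emph{quantitatively} strict global maximum of $\Xi(x_0,t_0):=\mathcal{F}_{x_0,t_0}(f)$ (Definition~\ref{def:Xi}), following the scheme of Colding--Minicozzi. As a preliminary I would observe that polynomial energy density growth gives $\brs{Tf(x)}\le p(\brs{x})$ for some polynomial $p$, hence $\brs{\tau}=\brs{\tfrac{x}{2}\hook Tf}\le\tfrac12\brs{x}p(\brs{x})$ by the soliton equation; since differentiating $G_{x_0,t_0}$ in $(x_0,t_0)$ produces only polynomial-in-$x$ factors with constants locally uniform in $(x_0,t_0)$, differentiation under the integral sign is legitimate and $\Xi\in C^\infty(\bRm\times\mathbb{R}_{>0})$; in particular $\Xi$ is continuous. \emph{Step 1: $(0,1)$ is a strict local maximum.} Since $f\in\mathfrak{S}=\mathfrak{S}_{0,1}$, Corollary~\ref{cor:critpoints} gives that $(0,1)$ is a critical point of $\Xi$. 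Evaluating Proposition~\ref{prop:2ndvarsol} along the linear path $(sy,1+sh)$ with $\dot f\equiv0$ yields
\[
\tfrac{d^2}{ds^2}\Big|_{s=0}\brk{\Xi(sy,1+sh)}=-\int_{\bRm}\prs{h^2\brs{\tau}^2+\tfrac12\brs{y\hook Tf}^2}G_0\,dV .
\]
As a quadratic form in $(h,y)\in\mathbb{R}\times\mathbb{R}^m$ this is negative semidefinite, and it degenerates only if $\int\brs{\tau}^2G_0\,dV=0$ (forcing $\tau\equiv0$, hence $f$ constant by Lemma~\ref{lem:harsoltnconst}, contradicting noncylindricality since a constant map is cylindrical) or $y$ lies in the kernel of $y\mapsto\int\brs{y\hook Tf}^2G_0\,dV$ (forcing $y\hook Tf\equiv0$, hence $y=0$ since $f$ is noncylindrical). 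So the Hessian is negative definite, and Taylor's theorem provides a ball $B$ about $(0,1)$ with $\Xi<\Xi(0,1)$ on $B\setminus\{(0,1)\}$.

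\emph{Step 2: monotonicity along parabolic paths.} For $y\in\mathbb{R}^m$ and $h\in\mathbb{R}$ let $\gamma_{y,h}(s):=(sy,1+hs^2)$, defined while $1+hs^2>0$. Feeding the soliton identities of Corollary~\ref{cor:solitonids} into the first variation formula of Proposition~\ref{prop:1stvar} with $\dot f\equiv0$ (the integrability hypothesis holds by the polynomial bounds above), the terms involving $\brs{x-x_s}^2$ and $\langle\dot x_s,x-x_s\rangle$ should collapse so as to leave
\[
\tfrac{d}{ds}\brk{\Xi(\gamma_{y,h}(s))}=-\,\tfrac{s}{1+hs^2}\int_{\bRm}\brs{(shx+y)\hook Tf}^2\,G_s\,dV\le0 .
\]
Thus $\Xi$ is nonincreasing along every such parabola. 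Since for arbitrary $(x_0,t_0)$ with $t_0>0$ the parabola $\gamma_{x_0,\,t_0-1}$ joins $(0,1)$ (at $s=0$) to $(x_0,t_0)$ (at $s=1$) while $1+(t_0-1)s^2\ge\min(1,t_0)>0$ on $[0,1]$, this gives $\Xi(x_0,t_0)\le\Xi(0,1)$ for all $(x_0,t_0)$, so $\Xi(0,1)=\sup_{x_0,t_0}\Xi(x_0,t_0)=\la(f)$; in particular the strict local maximum value in Step~1 is exactly $\la(f)$.

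\emph{Step 3: extracting the gap.} Fix $\epsilon>0$ and set $\Sigma_\epsilon:=\{(x,t):\brs{x}+\brs{\log t}=\epsilon\}$, a compact subset of $\bRm\times\mathbb{R}_{>0}$ (note $\brs{\log t}=\epsilon$ confines $t$ to a compact interval in $(0,\infty)$). If $\Xi(p)=\la(f)$ for some $p=(a,b)\in\Sigma_\epsilon$, then $\Xi$, being nonincreasing along $\gamma_{a,b-1}$ on $[0,1]$ with equal endpoint values $\la(f)$, is constant $\equiv\la(f)$ there; but $p\ne(0,1)$ makes $\gamma_{a,b-1}$ nonconstant, so $\gamma_{a,b-1}(s)\in B\setminus\{(0,1)\}$ for small $s>0$, contradicting Step~1. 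Hence $\Xi<\la(f)$ on the compact set $\Sigma_\epsilon$, so $\delta_\epsilon:=\la(f)-\max_{\Sigma_\epsilon}\Xi>0$ by continuity of $\Xi$. Finally, for any $(x_0,t_0)$ with $\brs{x_0}+\brs{\log t_0}>\epsilon$, the continuous function $s\mapsto\brs{sx_0}+\brs{\log(1+(t_0-1)s^2)}$ equals $0$ at $s=0$ and exceeds $\epsilon$ at $s=1$, so by the intermediate value theorem $\gamma_{x_0,t_0-1}(s_\ast)\in\Sigma_\epsilon$ for some $s_\ast\in(0,1)$; Step~2 then gives $\mathcal{F}_{x_0,t_0}(f)=\Xi(\gamma_{x_0,t_0-1}(1))\le\Xi(\gamma_{x_0,t_0-1}(s_\ast))\le\la(f)-\delta_\epsilon$, and taking the supremum over all such $(x_0,t_0)$ yields the lemma.

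The step I expect to require the most care is the identity in Step~2: one must verify that the soliton equation, through Corollary~\ref{cor:solitonids} (hence ultimately Lemma~\ref{lem:primid}), forces all the $\dot t_s$- and $\dot x_s$-terms of the first variation to reassemble into the single manifestly nonpositive quantity $-\tfrac{s}{1+hs^2}\int\brs{(shx+y)\hook Tf}^2G_s\,dV$ --- this sign is exactly what makes the parabolas $\gamma_{y,h}$ the right family to sweep out space--time. By contrast the globalization in Step~3 is soft, relying only on the compactness of $\Sigma_\epsilon$, the continuity of $\Xi$, and the propagation of equality along the monotone paths emanating from the strict local maximum.
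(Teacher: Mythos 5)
Your proposal is correct and follows essentially the same route as the paper: criticality via Corollary \ref{cor:critpoints} plus negative definiteness of the second variation (Proposition \ref{prop:2ndvarsol}, Lemma \ref{lem:harsoltnconst}, noncylindricality) for the strict local maximum, and then monotonicity of $\Xi$ along the parabolas $(sy,1+hs^2)$, where the identity you assert but do not compute, $\tfrac{d}{ds}\Xi=-\tfrac{s}{1+hs^2}\int\brs{(shx+y)\hook Tf}^2G_s\,dV$, is exactly what the paper derives from Proposition \ref{prop:1stvar} and Corollary \ref{cor:solitonids}. Your Step 3 (compactness of $\Sigma_\epsilon$, propagation of equality along the monotone paths, and the intermediate value theorem) simply makes explicit the extraction of $\delta_\epsilon$ that the paper leaves implicit after Step 2, so it is a welcome elaboration rather than a different argument.
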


\begin{lemma}\label{lem:Fdecaylemma} Let $f \in C^{\infty}(\bRm)$ have
polynomial growth. Then for all $x_0 \in \bRn$,
\begin{equation}\label{eq:Fdecaylemma}
\lim_{t_0 \to 0} \int_{\mathbb{R}^m}{f G_{x_0,t_0} dV} = f(x_0).
\end{equation}
Furthermore given a map $f : \mathbb R^m \to (N, h)$ with polynomial energy
density growth, then for all $x_0 \in \bRm$,
\begin{equation*}
\lim_{t_0 \to 0} \mathcal{F}_{x_0,t_0}(f) = 0.
\end{equation*}

\begin{proof} We have that \eqref{eq:Fdecaylemma} is a well known trait of the
heat kernel. We address the second identity. Since $\brs{T f}^2$ has
polynomial growth we have
\begin{align*}
\lim_{t_0 \to 0} \FF_{x_0,t_0}(f) =&\ \lim_{t_0 \to 0} t_0 \int_{\mathbb R^n}
\brs{Tf}^2 G_0 = \left( \lim_{t_0 \to 0} t_0 \right) \left( \lim_{t_0 \to
0} \int_{\mathbb R^n} \brs{Tf}^2 G_0 \right) = 0,
\end{align*}
as required.
\end{proof}
\end{lemma}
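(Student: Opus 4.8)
The plan is to handle the two assertions in turn, the second being an immediate consequence of the first. The identity \eqref{eq:Fdecaylemma} is the familiar statement that the Euclidean heat kernel is an approximate identity; the only point requiring care is that $f$ is not assumed bounded but merely of polynomial growth, so one must verify that dominated convergence still applies. The cleanest route is to rescale so that the Gaussian mass is concentrated in a fixed region.

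For the first claim I would substitute $y = \tfrac{x-x_0}{2\sqrt{t_0}}$. Since $G_{x_0,t_0}(x,0) = (4\pi t_0)^{-m/2} e^{-|x-x_0|^2/(4t_0)}$ and $dV_x = (2\sqrt{t_0})^m\, dV_y$, this rewrites the integral as
\[
\int_{\bRm} f\, G_{x_0,t_0}\, dV = \pi^{-m/2}\int_{\bRm} f\bigl(x_0 + 2\sqrt{t_0}\,y\bigr)\, e^{-|y|^2}\, dV_y .
\]
By continuity of $f$ the integrand converges pointwise to $\pi^{-m/2} f(x_0) e^{-|y|^2}$ as $t_0 \to 0$. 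Taking the polynomial $p$ bounding $f$ to be nondecreasing on $[0,\infty)$ (replace its coefficients by their absolute values if necessary), for $0 < t_0 \le 1$ one has $|x_0 + 2\sqrt{t_0}\,y| \le |x_0| + 2|y|$, hence $|f(x_0 + 2\sqrt{t_0}\,y)| \le p(|x_0| + 2|y|)$, and $\pi^{-m/2} p(|x_0| + 2|y|)\, e^{-|y|^2}$ is an integrable majorant independent of $t_0$. The dominated convergence theorem then gives $\lim_{t_0 \to 0}\int_{\bRm} f\, G_{x_0,t_0}\, dV = \pi^{-m/2} f(x_0)\int_{\bRm} e^{-|y|^2}\, dV_y = f(x_0)$.

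For the second claim, observe that $\tfrac{1}{2}|Tf|^2$ has polynomial growth by Definition \ref{def:peg}, so the first part applied to this function yields $\lim_{t_0\to 0}\int_{\bRm}\tfrac{1}{2}|Tf|^2 G_0\, dV = \tfrac{1}{2}|Tf(x_0)|^2$, a finite number. Since $\FF_{x_0,t_0}(f) = t_0\int_{\bRm}\tfrac{1}{2}|Tf|^2 G_0\, dV$ is the product of $t_0 \to 0$ with a quantity converging to a finite limit, it tends to $0$, as claimed. There is no substantive obstacle here: the entire content is the choice of the dominating function in the rescaled integral, which is precisely why polynomial growth of $f$ (rather than mere continuity) is assumed.
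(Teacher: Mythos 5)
Your proposal is correct and follows essentially the same route as the paper: the second claim is reduced to the first applied to $\brs{Tf}^2$ (which has polynomial growth), and the product with the vanishing factor $t_0$ gives the result. The only difference is that you spell out the standard rescaling-plus-dominated-convergence proof of the approximate-identity property \eqref{eq:Fdecaylemma}, which the paper simply cites as a well-known trait of the heat kernel; your verification of the integrable majorant $p(\brs{x_0}+2\brs{y})e^{-\brs{y}^2}$ is exactly the point where the polynomial growth hypothesis is used.
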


\begin{proof}[Proof of Theorem \ref{thm:main1}]
If $f$ is not $\mathcal{F}$-stable then there exists a variation $f_s$ for $s
\in
[- 2 \epsilon, 2 \epsilon]$ with $f_0 = f$ satisfying the following
\begin{enumerate}
\item[(V1)]\label{V1} For each variation $f_s$ of $f$, the support of $f_s -
f$ is compact.
\item[(V2)]\label{V2} For any paths $(x_s,t_s)$ with $x_0 = 0$ and $t_0 =1$,
\begin{equation}\label{eq:thm0.15CM1}
\left. \frac{d^2}{d s^2} \lbr \mathcal{F}_{x_s,t_s}(f_s) \rbr
\right|_{s=0} < 0.
\end{equation}
\end{enumerate}

For the one-parameter family $f_s$, we set $\Xi$ to be as in Definition
\ref{def:Xi}.  Also, set
\begin{equation}
B^{\circ}_r:= \{ (x,t,s) : 0 < |x| + \brs{\log t} + s < r \}.
\end{equation}
We claim that there exists $\epsilon' > 0$ so that for $s
\neq 0$ and $|s| \leq \epsilon'$ one has
\begin{equation}\label{eq:thm0.15CM2}
\la (f_s) := \sup_{x_0,t_0 } \Xi(x_0,t_0,s) < \Xi(0,1,0) = \la(f).
\end{equation}
Following \cite{CM} we proceed in five steps:%
\begin{enumerate}
\item[(1)] $\Xi$ has a strict local maximum at $(0,1,0)$.
\item[(2)] $\Xi(\cdot,\cdot,0)$ has a strict global maximum at $(0,1,0)$.
\item[(3)] $\frac{\del}{\del s} \lbr \Xi(x_0,t_0,s)\rbr$ is uniformly bounded on
compact sets.
\item[(4)] For $|x_0|$ sufficiently large, $\Xi(x_0,t_0,s) < \Xi(0,1,0)$.
\item[(5)] For $| \log t_0 |$ sufficiently large, $\Xi(x_0,t_0,s) < \Xi(0,1,0)$.
\end{enumerate}
These steps yield the groundwork for the desired result at the end of the proof.

\noindent \textbf{Proof of (1):}
Since $f$ is a soliton, by Corollary \ref{cor:critpoints}, given a path
$(x_s,t_s)$ with
$(x_0,t_0) = (0,1)$ and a variation $f_s$ of $f$, we have
$\left. \frac{\del}{\del s}\lbr \Xi(x_s,t_s, s) \rbr \right|_{s=0} = 0$, which
implies that $(0,1,0)$ is a critical point of $\Xi$. Consider one such path of
the form $(sy,1+hs)$ for $y \in \mathbb R^n$, $h \in \mathbb{R}$ and some
variation of $f$ given by $f_{bs}$ for some $b \neq 0$. Then we have that, by
property (V2),
\begin{equation*}
\left. \frac{\del^2}{\del s^2} \lbr \Xi(sy,1+hs,bs)  \rbr \right|_{s=0}=b^2
\left. \frac{\del^2}{\del s^2} \lbr \mathcal{F}_{x_s,y_s}(f_{s})  \rbr
\right|_{s=0} \leq 0,
\end{equation*}
where here $x_s = s \frac{y}{b}$ and $t_s = 1 + \frac{h}{b}s$.
Now we consider the second variation when $b = 0$. As an immediate application
of Proposition \ref{lem:lem7.10CM} we have that $\left. \frac{\del^2}{\del s^2}
\lbr
\Xi(sy,1+hs,0) \rbr \right|_{s=0} < 0$. Therefore the Hessian of
$\Xi$ is negative definite at $(0,1,0)$, and thus $\Xi$ attains a strict local
maximum at this point. We may choose $\epsilon' \in (0,\epsilon)$ such that for
$(x_0,t_0,s) \in B^{\circ}_{\epsilon'}$ we have that
\begin{equation*}
\Xi(x_0,t_0,s) < \Xi(0,1).
\end{equation*}

\noindent \textbf{Proof of (2):} This is an immediate result of Proposition
\ref{lem:lem7.10CM}. Therefore, $\la(f) = \Xi(0,1)$ and we may choose $\delta>0$
so
that for all points of the form $(x_0,t_0,0)$ outside $B^{\circ}_{{\epsilon' /
4}}$
we have that
\begin{equation*}
\Xi(x_0,t_0) < \Xi(0,1) - \delta.
\end{equation*}

\noindent \textbf{Proof of (3):} Using Proposition \ref{prop:1stvar}, we see
that
\begin{align*}
\frac{\del}{\del s} \lbr \Xi(x_0,t_0,s) \rbr
&= - t_s \int_{\bRm} \left\langle \dot{f}_s, \mathsf{S}_{x_0,t_0}(f_s)
\right\rangle G_0 dV .
\end{align*}
Using the assumption of polynomial energy density growth and applying the
dominated convergence theorem it follows that $\tfrac{\del \Xi}{\del s}$,
is continuous in all three variables
$x_0$, $t_0$ and $s$. Therefore $\tfrac{\del \Xi}{\del s}$ is uniformly bounded
on compact sets. \\
\noindent \textbf{Proof of (4):}  By hypothesis, we may choose $R \in
\mathbb{R}_{>0}$ so that
the support of $f - f_s$ is contained in $B_R \subset \bRm$. Let $\rho > 0$
and consider $|x_0| > \rho + R$. Then we have that
\begin{align*}
\Xi(x_0,t_0,s)
&= t_0 \int_{\mathbb{R}^m} \left| Tf_s \right|^2 G_{0} dV \\
& = t_0 \int_{B_R} \left| T f_s \right|^2 G_{0} dV +  t_0
\int_{\mathbb{R}^m \backslash B_R} \left| T f \right|^2 G_{0} dV \\
& \leq t_0^{\frac{2-m}{2}} (4 \pi)^{- \frac{m}{2}} \int_{B_R} \left| T f_s
\right|^2 e^{-\frac{|x-x_0|^2}{4 t_0}} dV +  \Xi(x_0,t_0,0) \\
&\leq t_0^{\frac{2-m}{2}} (4 \pi)^{- \frac{m}{2}} e^{-\frac{\rho^2}{4
t_0}}\int_{B_R} \left| T f_s \right|^2  dV +  \Xi(x_0,t_0,0).
\end{align*}
By compactness of the domain $B_R \times [-2 \epsilon , 2 \epsilon ]$ we know
that $\int_{B_R} \left| T f_{s} \right|^2  dV < C_R$ for some $C_R \in
\mathbb{R}$. Therefore we conclude that
\begin{equation}\label{eq:sc4Xiest}
\Xi(x_0,t_0,s) \leq (4 \pi)^{-\frac{m}{2}} C_R t_0^{\frac{2-m}{2}}
e^{-\frac{\rho^2}{4 t_0}} + \Xi(x_0,t_0,0).
\end{equation}
Define the quantity
\begin{equation}\label{eq:sc4muRdef}
\mu_{\rho}(\tau) := \tau^{\frac{2-m}{2}} e^{- \frac{\rho^2}{4 \tau}}.
\end{equation}
We note in particular that
\begin{equation*}
\mu_1 \left( \frac{\tau}{\rho^2} \right) =\left( \frac{\tau}{\rho^2}
\right)^{\frac{2-m}{2}} e^{- \frac{\rho^2}{4 \tau}} = \rho^{m-2}
\mu_{\rho}(\tau).
\end{equation*}
The function $\mu_1$ is clearly continuous and therefore bounded and also
satisfies the following limit for $\ga \in \{ 0 ,\infty\}$,
\begin{align*}
\lim_{\tau \to \ga} \mu_1 \left( \tau \right) = \lim_{\tau \to \ga}
\tau^{\frac{2-m}{2}} e^{-\frac{1}{4 \tau }} = 0.
\end{align*}
We thus conclude that
\begin{equation}
\lim_{\rho \to \infty} \left( \sup_{\tau> 0} \mu_{\rho}(\tau) \right) =
\lim_{\rho \to \infty} \sup_{\tau> 0}\left(   \rho^{2-m} \mu_1 \left(
\frac{\tau}{\rho^2} \right) \right) = 0.
\end{equation}
Therefore, as a consequence of (2) combined with this above limit, we conclude
that for $|x_0|$ sufficiently large we have that $\Xi(x_0,t_0,s) < \Xi(0,1,0)$,
as desired.

\noindent \textbf{Proof of (5):} We first perform the following manipulation,
for $R \in \mathbb{R}_{>0}$
\begin{gather}
\begin{split}\label{eq:s5Ximanipulation}
\Xi(x_0,t_0,s)
&= t_0 \int_{\mathbb{R}^m} \left| T f_s \right|^2 G_{0} dV \\
& = t_0 \int_{B_R} \left| T f_s \right|^2 G_{0} dV +  t_0
\int_{\mathbb{R}^m \backslash B_R} \left| T f_s \right|^2 G_{0} dV \\
& \leq t_0^{\frac{2-m}{2}} (4 \pi)^{- \frac{m}{2}} \int_{B_R} \left| T f_s
\right|^2 G_0 dV +  \Xi(x_0,t_0,0) \\
& \leq C_R t_0^{\frac{2-m}{2}} (4 \pi)^{- \frac{m}{2}} +  \Xi(x_0,t_0,0).
\end{split}
\end{gather}
As a result of this, we also obtain the estimate
\begin{equation*}
\sup_{t_0 \geq 1} \Xi(x_0,t_0,s) \leq C_R (4 \pi)^{-\frac{m}{2}} + \la (f).
\end{equation*}
We break into two cases.  First, suppose $t_0$ is very large.   Combining
\eqref{eq:s5Ximanipulation} with part (2) we obtain the claim.  The case when
$t_0$ is small, in particular $t_0 \leq 1$, is more difficult.  Using
Proposition \ref{prop:1stvar} with $\dot{t}_0 = 1$, we have
\begin{gather} \label{thm1derbnd}
\begin{split}
\frac{\del}{\del t_0} \prs{ \Xi(x_0, t_0, s) }
&= \int_{\mathbb{R}^m} \left[ \frac{2 -m}{4} + \frac{|x-x_0|^2}{8 t_0} \right]
| T f_s |^2 G_0 dV\\
\geq&\ - C_0(\brs{x_0}).
\end{split}
\end{gather}
Note that $C_0$ is independent of $x_0$, $t_0$, and $s$ subject to the
restriction $|x_0|<R$.  Recall from Step (2), that
\begin{equation}
\Xi(0,1,0) = \la(f) > 0.
\end{equation}
Choose $\ga > 0$ so that $3 \ga < \la(f)$, and choose $t_{\ga} =
\frac{\ga}{C_0}$.
For any $x \in \bRm$ and $s \in [-\epsilon, \epsilon]$, by Lemma
\ref{lem:Fdecaylemma} there exists some $t_{x,s} > 0$ such that for all $t_0
\leq
t_{x,s}$ we have $|\Xi(x,t_0,s)|<\ga$.

On the set $\overline{B_{R+1}} \times [-\epsilon, \epsilon]$ we will construct a
finite open cover as follows. The cover consists of balls $b_i$ of radius
$r_i>0$ centered at $(x_i,t_i)$. Each $b_i$ has an associated time $ t_i  \leq
\min \{ t_{\ga}, 1 \}$ where
\begin{enumerate}
\item Given $(x,s)$ there exists and index $i(x,s)$ such that $(x,s) \in
b_{i(x,s)}$.
\item For each $b_i$ the associated $t_i$ is such that
\begin{equation*}
\left. \Xi(x,t_i,s) \right|_{b_i} < \ga.
\end{equation*}
Note that this choice follows from the existence of $t_{x,s}$ and the continuity
of $\Xi$.
\end{enumerate}
Choosing a finite subcover of the $b_i$'s we let $\bar{t}$ be the minimum of all
corresponding $t_i$. Then as a result of the derivative estimate
\ref{thm1derbnd} we
have that for any triple $(x,t_0,s)$ with $s \in [-\epsilon,\epsilon]$, and $x
\in \overline{B_{R}}$, and $t_0 \leq \bar{t}$,
\begin{align*}
\Xi(x,t_0,s)   \leq \Xi(x, t_{i(x,s)},s) + C_0 \left(t_{i(x,s)} - t_0 \right)
\leq 2\ga  < \la(f).
\end{align*}
Claim (5) follows.\\

Given claims (1)-(5) we finish the proof by dividing the domain into regions
corresponding to the size of $|x_0| + \brs{\log t_0}$.  Using (1), when $s$ is
sufficiently small there exists some $r > 0$ such that $\Xi(x_0,t_0,s) <
\Xi(x_0,t_0,0)$ for $(x_0,t_0)$ within the following region
\begin{equation*}
\mathfrak{R}_1 := \{ (x_0,t_0) : |x_0| + |\log t_0| < r \}.
\end{equation*}
Using (4) and (5) there exists an $R>0$ such that $\Xi(x_0,t_0,s) <
\Xi(x_0,t_0,0)$ for $(x_0,t_0)$ in the following region.
\begin{equation*}
\mathfrak{R}_2 := \{ (x_0,t_0) : |x_0| + |\log t_0| > R \}.
\end{equation*}
Therefore it remains to consider
\begin{equation}
\mathfrak{R}_3 := \{ (x_0,t_0) : R > |x_0| + |\log t_0| > r \}.
\end{equation}
Given $(x_0,t_0) \in \mathfrak{R}_3$, we know by (2) that $\Xi(x_0,t_0,0) <
\la(f)$, and by (3) that the $s$ derivative of $\Xi$ is uniformly bounded. So
we may choose a $\delta > 0$ such that $\Xi$ restricted to the region
$\mathfrak{R}_3 \times [-\delta,\delta]$ is bounded above by $\la(f)$.
Therefore, \eqref{eq:thm0.15CM2} holds on $\bigcup_{i=1}^3\mathfrak{R}_i$ and
as this union constitutes the entire space-time domain, the result follows.

Therefore we conclude that entropy stability implies $\mathcal{F}$-stability.
For
the converse, assume that $f$ is $\mathcal{F}$-stable. Then by definition for
any variation $f_s$ of $f$ there exists some $\sigma \in
\mathbb{R}$ and $\zeta \in TM$ such that $\mathcal{F}''(\dot{f}_0, \sigma,
\zeta)  \geq 0$. Hence for families $x_s$, $t_s$ with $\dot{x}_s =
\zeta$ and $\dot{t}_s = \sigma$ and sufficiently small $s$ we have
\begin{align*}
\la(f_s) &\geq \mathcal{F}_{x_s, t_s}(f_s) \\
&\geq \mathcal{F}_{0,1}(f) \\
&= \la (f).
\end{align*}
Therefore we have that $\la(f_s) \geq \la(f)$ for any variation $f_s$ of $f$,
and the result follows.
\end{proof}
\section{Rigidity Results} \label{s:stabilityrigidity}

In this section we establish rigidity results for solitons, focusing primarily
on the case of spherical targets.  Our main goal is to establish Theorem
\ref{thm:main2}.  To accomplish this we first show Theorem \ref{thm:rayleigh},
which shows that showing a negative upper bound for the Rayleigh quotient
characterizing the lowest eigenmode of $L^f$ suffices to establish instability
of a given soliton.  With this in place, by carefully exploiting conformal
vector fields on the sphere we establish such an upper bound in the presence of
large entropy, finishing the proof of Theorem \ref{thm:main2}.

\subsection{\texorpdfstring{$\mathcal{F}$}{F}-stability and Eigenvector fields
of \texorpdfstring{$L^f$}{Lf}}

For the rest of the paper we only consider solitons based at $(0,1)$, and drop
the basepoint on the Greens function in our notation.  Given the equivalence of
entropy stability and $\calf$-stability, we will study
spectral properties of the $L^f$ operator, which for a soliton based at $(0,1)$
is
\begin{equation*}
L^fV=-\Delta V - R(V,Tf_\alpha)Tf_\alpha + \N_{\frac{x}{2}}V.
\end{equation*}
Our first lemma concerns the action of $L^f$ on the pushforward of a vector
field.
\begin{lemma}\label{lem:Lzeta}
Suppose $f \in C^{\infty}(\mathbb{R}^m,N)$ is a soliton. Given a vector field
$\zeta^{\ga} \del_{\ga} \in T \mathbb{R}^m$,
\begin{align*}
L^f \left(  \zeta \hook Tf\right)   =  - \prs{ \frac{\zeta}{2} \hook Tf} -  2
(\N_{\ga} \zeta^{\gb})(\N_{\ga} \del_{\gb} f) -Tf(\lap \zeta) +  \left(
\N_{\frac{x}{2}} \zeta \right) \hook Tf.
\end{align*}

\begin{proof}
Recall that by the definition of $L^f$,
\begin{align*}
L^f \left( \zeta \hook Tf\right)^j &= - \N_{\gb} \N_{\gb} \lb \zeta^{\gamma}
Tf^j_{\gamma} \rb -
( \del_{\ga} f^{q})( \del_{\ga} f^j) (\dot{f}^p) R_{pqj}^{i} \zeta^{\eta}
Tf^i_{\eta} - \frac{x^{\ga}}{2} \N_{\ga} (\zeta^{\eta} Tf^j_{\eta}).
\end{align*}
We first compute
\begin{align*}
\lap \left( \zeta^{\gb} (f^i_{\gb})\right)
&= \N_{\ga} \N_{\ga} \left( \zeta^{\gb} f^i_{\gb} \right) \\
&= \N_{\ga} \lb \zeta^{\gb} \N_{\ga} f^i_{\gb} \rb + \N_{\ga} \left(\N_{\ga}
\zeta^{\gb} f^i_{\gb} \right) \\
&=\zeta^{\gb} (\N_{\ga} \N_{\ga}  f^{i}_{\gb} ) +  2 (\N_{\ga}
\zeta^{\gb})(\N_{\ga}  f^i_{\gb} ) + (\N_{\ga} \N_{\ga} \zeta^{\gb})  f^i_{\gb}.
\end{align*}
We expand the first term:
\begin{align*}
\zeta^{\gb} (\N_{\ga} \N_{\ga} \left(  f^{i}_{\gb} \right)) &= \zeta^{\gb}
(\N_{\ga} \N_{\gb} \left(  f^{i}_{\ga} \right)) \\
&=  \zeta^{\gb} ( [\N_{\ga}, \N_{\gb}]  f^{i}_{\ga} )  + \zeta^{\gb} ( \N_{\gb}
\N_{\ga}  f^{i}_{\ga} )\\
&=  \zeta^{\gb} ( (\del_{\ga} f^p) (\del_{\gb} f^q )R_{pqk}^{i}  f^{k}_{\ga})  +
\zeta^{\gb} ( \N_{\gb} \N_{\ga}  f^{i}_{\ga} ).
\end{align*}
We apply the soliton equation to the first term
\begin{align*}
\zeta^{\gb} \N_{\gb} \tau^i
&= \zeta^{\gb} \N_{\gb} \left( \frac{x^{\ga}}{2} \del_{\ga} f^i \right) \\
&=  \frac{\zeta^{\ga}}{2} (\del_{\ga} f^i) +  \zeta^{\gb} \frac{x^{\ga}}{2}
\N_{\gb} (\del_{\ga} f^i ) \\
&=  \frac{\zeta^{\ga}}{2} (\del_{\ga} f^i) +  \zeta^{\gb} \frac{x^{\ga}}{2}
\N_{\gb} (\del_{\ga} f^i ).
\end{align*}
Therefore
\begin{align*}
\lap \left( \zeta^{\gb} (\del_{\gb} f^i)\right) &= \zeta^{\gb} ( (\del_{\ga}
f^p) (\del_{\gb} f^q )R_{pqk}^{i} \left( \del_{\ga} f^{k} \right))  + 
\frac{\zeta^{\ga}}{2} (\del_{\ga} f^i) \\
& \hsp +  \zeta^{\gb} \frac{x^{\ga}}{2} \N_{\gb} (\del_{\ga} f^i ) +  2
(\N_{\ga} \zeta^{\gb})(\N_{\ga} \del_{\gb} f^i) + (\N_{\ga} \N_{\ga}
\zeta^{\gb}) (\del_{\gb} f^i).
\end{align*}
Therefore
\begin{align*}
L^f(\zeta \hook Tf )^i &= - \frac{1}{2} \zeta^{\ga} (\del_\ga f^i) - 
\zeta^{\gb} \frac{x^{\ga}}{2} \N_{\gb} (\del_{\ga} f^i ) -  2 (\N_{\ga}
\zeta^{\gb})(\N_{\ga} \del_{\gb} f^i) - (\N_{\ga} \N_{\ga} \zeta^{\gb})
(\del_{\gb} f^i) + \frac{x^{\gb}}{2} \N_{\gb} \lb \zeta^{\ga} \del_{\ga} f^i
\rb\\
&= - \frac{1}{2} \zeta^{\ga} (\del_\ga f^i) -  2 (\N_{\ga} \zeta^{\gb})(\N_{\ga}
\del_{\gb} f^i)  - (\N_{\ga} \N_{\ga} \zeta^{\gb}) (\del_{\gb} f^i) +
\frac{x^{\gb}}{2} (\N_{\gb} \zeta^{\ga})( \del_{\ga} f^i ).
\end{align*}
Thus we conclude that
\begin{align*}
L^f ( \zeta \hook Tf )^i =- \frac{1}{2} \zeta^{\ga} (\del_\ga f^i) -  2
(\N_{\ga} \zeta^{\gb})(\N_{\ga} \del_{\gb} f^i)  - (\N_{\ga} \N_{\ga}
\zeta^{\gb}) (\del_{\gb} f^i) + \frac{x^{\gb}}{2} (\N_{\gb} \zeta^{\ga})(
\del_{\ga} f^i ).
\end{align*}
The result follows.
\end{proof}
\end{lemma}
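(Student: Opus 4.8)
The plan is to compute $L^f(\zeta \hook Tf)$ by expanding the definition
\begin{equation*}
L^f V = - \lap V - R(V, Tf_{\ga}) Tf_{\ga} + \N_{\frac{x}{2}} V
\end{equation*}
term by term, writing $\zeta \hook Tf = \zeta^{\gb} Tf_{\gb}$ with components $\zeta^{\gb} \del_{\gb} f^i$. For the Laplacian term, since $\zeta$ is a vector field on flat $\bRm$ the Leibniz rule gives
\begin{equation*}
\lap(\zeta^{\gb} \del_{\gb} f^i) = \zeta^{\gb} \lap(\del_{\gb} f^i) + 2 (\N_{\ga} \zeta^{\gb})(\N_{\ga} \del_{\gb} f^i) + (\lap \zeta^{\gb}) \del_{\gb} f^i,
\end{equation*}
so the $- 2(\N_{\ga}\zeta^{\gb})(\N_{\ga}\del_{\gb}f)$ and $- Tf(\lap\zeta)$ terms of the claimed identity appear immediately, and the problem reduces to evaluating $\zeta^{\gb}\lap(\del_{\gb}f^i)$.

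The main computation is to simplify $\zeta^{\gb}\lap(\del_{\gb}f^i) = \zeta^{\gb}\N_{\ga}\N_{\ga}\N_{\gb}f^i$. First I would use that on $\bRm$ the covariant Hessian of $f$ is symmetric, $\N_{\ga}\N_{\gb}f^i = \N_{\gb}\N_{\ga}f^i$, to rewrite this as $\zeta^{\gb}\N_{\ga}\N_{\gb}\N_{\ga}f^i$, then commute the outer pair of derivatives using the commutation formula \eqref{eq:comutform}, whose $R^M$ contribution vanishes since $\bRm$ is flat. This produces $\zeta^{\gb}\N_{\gb}(\N_{\ga}\N_{\ga}f^i)$ together with a curvature term, and since $\N_{\ga}\N_{\ga}f^i = \tau^i$ we get $\zeta^{\gb}\N_{\gb}\tau^i$. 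Feeding in the soliton equation $\tau^i = \tfrac{x^{\ga}}{2}\del_{\ga}f^i$ yields $\N_{\gb}\tau^i = \tfrac{1}{2}\del_{\gb}f^i + \tfrac{x^{\ga}}{2}\N_{\gb}\del_{\ga}f^i$; contracting against $\zeta^{\gb}$, the first piece becomes the $-\tfrac{1}{2}(\zeta\hook Tf)$ term.

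It then remains to assemble the pieces. The curvature term generated by the commutator should cancel exactly against the term $-R(\zeta\hook Tf, Tf_{\ga})Tf_{\ga}$ in the definition of $L^f$; verifying this cancellation, with the correct sign in the paper's index conventions for $R$, is the one delicate point I anticipate. Next, the $\tfrac{x^{\ga}}{2}\N_{\gb}\del_{\ga}f^i$ contribution from $\zeta^{\gb}\N_{\gb}\tau^i$ must be matched against the transport term, which expands as $\N_{\frac{x}{2}}(\zeta^{\gb}\del_{\gb}f^i) = \tfrac{x^{\gb}}{2}(\N_{\gb}\zeta^{\ga})\del_{\ga}f^i + \tfrac{x^{\gb}}{2}\zeta^{\ga}\N_{\gb}\del_{\ga}f^i$; using the Hessian symmetry of $f$ once more, the second summand cancels the leftover $\tfrac{x}{2}$-term, leaving precisely $(\N_{\frac{x}{2}}\zeta)\hook Tf$. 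Collecting the survivors, namely $-\tfrac{1}{2}(\zeta\hook Tf)$, $-2(\N_{\ga}\zeta^{\gb})(\N_{\ga}\del_{\gb}f)$, $-Tf(\lap\zeta)$, and $(\N_{\frac{x}{2}}\zeta)\hook Tf$, gives the stated formula. Apart from the curvature cancellation, the remaining work is pure bookkeeping: relabeling dummy indices and tracking which terms come from which of the three pieces of $L^f$.
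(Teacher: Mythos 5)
Your proposal is correct and follows essentially the same route as the paper's proof: Leibniz rule on the Laplacian, symmetry of the Hessian of $f$ plus the commutation formula \eqref{eq:comutform} to produce $\zeta^{\gb}\N_{\gb}\tau^i$ and a curvature term, the soliton equation to extract the $-\tfrac{1}{2}(\zeta\hook Tf)$ piece, and the cancellations of the curvature term against $-R(\zeta\hook Tf,Tf_\ga)Tf_\ga$ and of the leftover $\tfrac{x}{2}$-Hessian term against the transport term. No gaps.
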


\begin{cor}[\cite{Zhang} Proposition 6]
Suppose $\zeta^i \del_i$ is a constant vector field. Then we have that
\begin{align*}
L^f \left(\zeta \hook Tf\right)   =  - \frac{1}{2} \zeta \hook Tf.
\end{align*}
Furthermore, 
\begin{align*}
L^f\prs{\frac{x}{2} \hook Tf}=-\frac{x}{2} \hook TF.
\end{align*}
\end{cor}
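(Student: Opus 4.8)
The plan is to obtain both identities as immediate specializations of the general formula of Lemma~\ref{lem:Lzeta},
\[
L^f(\zeta \hook Tf) = -\Bigl(\tfrac{\zeta}{2}\hook Tf\Bigr) - 2(\N_\alpha \zeta^\beta)(\N_\alpha \del_\beta f) - Tf(\lap \zeta) + \bigl(\N_{\frac{x}{2}}\zeta\bigr)\hook Tf,
\]
applied to two suitable choices of the vector field $\zeta = \zeta^\alpha \del_\alpha$ on $\mathbb{R}^m$ (here $\zeta^i\del_i$ in the statement should be read as $\zeta^\alpha\del_\alpha \in T\mathbb{R}^m$, and the $TF$ in the second displayed equation as $Tf$).

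For the first identity, take $\zeta$ to be any constant vector field. Then $\N_\alpha \zeta^\beta \equiv 0$, so $\lap \zeta = 0$ and $\N_{\frac{x}{2}}\zeta = 0$; three of the four terms on the right-hand side drop out and one is left with $L^f(\zeta \hook Tf) = -\tfrac12\,\zeta \hook Tf$. For the second identity, I would apply the lemma to the (non-constant) radial field $\zeta^\alpha = \tfrac{x^\alpha}{2}$, for which $\zeta \hook Tf = \tfrac{x}{2}\hook Tf$. Here $\N_\alpha \zeta^\beta = \tfrac12 \delta_\alpha^\beta$, hence $\lap \zeta = \N_\alpha \N_\alpha \zeta = 0$ and $\N_{\frac{x}{2}}\zeta = \tfrac{x^\alpha}{2}\,\tfrac12 \delta_\alpha^\beta \del_\beta = \tfrac{x}{4}$. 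Substituting, the second term becomes $-2\cdot\tfrac12\,\delta_\alpha^\beta\,\N_\alpha \del_\beta f = -\N_\alpha \del_\alpha f = -\tau$, the third term vanishes, and the first and fourth terms equal $-\tfrac14\, x \hook Tf$ and $+\tfrac14\, x \hook Tf$, so they cancel. Thus $L^f\bigl(\tfrac{x}{2}\hook Tf\bigr) = -\tau$, and invoking the soliton equation $\tau = \tfrac{x}{2}\hook Tf$ yields the claim.

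There is no substantive obstacle here beyond bookkeeping in the index manipulations; the only points worth flagging are that $\tfrac{x}{2}\hook Tf$ lies in $W_f^{2,2}$ when $f$ has polynomial energy density growth, so that $L^f$ genuinely acts on it, and that the eigenvalue $-1$ appearing in the second identity is precisely the infinitesimal manifestation of the scaling/time-translation symmetry encoded in the soliton equation, consistent with Corollary~\ref{cor:critpoints}, which says a soliton is a critical point of $\FF_{0,1}$.
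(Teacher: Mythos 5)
Your proposal is correct and is exactly the argument the paper intends: the corollary appears immediately after Lemma~\ref{lem:Lzeta} with no written proof, as a direct specialization of that formula to a constant field (all three correction terms vanish) and to $\zeta = x/2$ (where the first and fourth terms cancel and the second term produces $-\tau$, converted via the soliton equation). Your reading of $\zeta^i\del_i$ as a vector field on $\mathbb{R}^m$ and of $TF$ as a typo for $Tf$ is also the right one.
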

\begin{cor} The following equality holds
\begin{equation*}
\left\langle L^f(\zeta \hook Tf ), (\zeta \hook Tf ) \right\rangle =-
\frac{1}{2} \brs{\zeta \hook Tf  }^2 -  \left\langle  \prs{\lap \zeta - \left(
\N_{\tfrac{x}{2}} \zeta \right)} \hook Tf  +  2 (\N_{\ga} \zeta^{\gb})(\N_{\ga}
Tf_{\gb}), \zeta \hook Tf \right\rangle.
\end{equation*}
\begin{proof} We simply compute using the formula of Lemma \ref{lem:Lzeta}.
\begin{align*}
\left\langle L^f(\zeta \hook Tf  ), (\zeta \hook Tf ) \right\rangle
&= - \frac{1}{2} | \zeta \hook Tf  |^2 - 2 \N_{\ga} \zeta^{\gb} \langle \N_{\ga}
Tf_{\gb}, \zeta  \hook Tf  \rangle - \langle \prs{\lap \zeta}  \hook Tf, \zeta 
\hook Tf \rangle\\
& \hsp + \left\langle  \left( \N_{\tfrac{x}{2} } \zeta \right)  \hook Tf, \zeta
\hook Tf\right\rangle \\
&=- \frac{1}{2} | \zeta \hook Tf  |^2 -  \left\langle  \prs{\lap \zeta}  \hook
Tf +  2 \N_{\ga} \zeta^{\gb}\N_{\ga} Tf_{\gb} -  Tf\left( \N_{\tfrac{x}{2}}
\zeta \right) , \zeta \hook Tf \right\rangle\\
&=- \frac{1}{2} \brs{\zeta \hook Tf  }^2 -  \left\langle  \prs{\lap \zeta -
\left( \N_{\tfrac{x}{2}} \zeta \right)} \hook Tf  +  2 (\N_{\ga}
\zeta^{\gb})(\N_{\ga} Tf_{\gb}), \zeta \hook Tf \right\rangle.
\end{align*}
The result follows.
\end{proof}
\end{cor}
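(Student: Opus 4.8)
The plan is to obtain this identity by directly substituting the formula of Lemma \ref{lem:Lzeta} and pairing the result against $\zeta \hook Tf$ in the $f^*(TN)$ inner product. Concretely, Lemma \ref{lem:Lzeta} gives
\begin{equation*}
L^f \prs{ \zeta \hook Tf } = - \prs{ \tfrac{\zeta}{2} \hook Tf } - 2 (\N_{\ga} \zeta^{\gb})(\N_{\ga} \del_{\gb} f) - Tf(\lap \zeta) + \prs{ \N_{\tfrac{x}{2}} \zeta } \hook Tf ,
\end{equation*}
and since the pointwise pairing is $\mathbb{R}$-bilinear, taking $\ip{ \,\cdot\,, \zeta \hook Tf }$ of both sides splits into four contributions. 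The first produces exactly $-\tfrac{1}{2}\brs{\zeta \hook Tf}^2$.

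For the remaining three terms I would invoke two elementary identifications. First, $\del_{\gb} f$ and $(Tf)_{\gb}$ denote the same section of $f^*(TN)$, so $\N_{\ga}\del_{\gb}f = \N_{\ga}(Tf)_{\gb}$, which puts the second term into the form appearing in the statement. Second, contracting the differential $Tf$ against the vector field $\lap\zeta = (\lap\zeta)^{\gb}\del_{\gb}$ is by definition $(\lap\zeta)\hook Tf$, so $Tf(\lap\zeta) = (\lap\zeta)\hook Tf$. Since the third term of $L^f(\zeta\hook Tf)$ enters with a minus sign and the fourth with a plus sign, their combined contribution after pairing is $-\ip{ (\lap\zeta - \N_{\tfrac{x}{2}}\zeta)\hook Tf,\ \zeta\hook Tf }$; collecting this with the (rewritten) second term yields precisely the asserted right-hand side.

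This is purely a bookkeeping computation, so there is no genuine obstacle; the only point requiring a moment of care is the sign pattern — the $\lap\zeta$ term and the $\N_{\tfrac{x}{2}}\zeta$ term combine with opposite signs, producing the difference $\lap\zeta - \N_{\tfrac{x}{2}}\zeta$ inside the $\hook Tf$ rather than a sum. I would write it as a two-line chain of equalities mirroring the statement, together with a remark that this reformulation is what will be used in the Rayleigh-quotient estimates of \S \ref{s:stabilityrigidity}: it isolates the manifestly negative term $-\tfrac12\brs{\zeta\hook Tf}^2$, which drives the instability once $\zeta$ is chosen to be a conformal vector field on the sphere.
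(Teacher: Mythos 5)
Your proposal is correct and follows exactly the paper's own argument: substitute the formula of Lemma \ref{lem:Lzeta}, expand the pairing against $\zeta \hook Tf$ by bilinearity, and regroup the three non-leading terms under a single inner product with the sign pattern you describe. No difference in approach and no gaps.
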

Next we give a generalization of (\cite{Zhang} Theorem 3).
\begin{prop}\label{prop:gapthm}
Suppose $f : \bRm \rightarrow N$ is a soliton and the sectional curvature
of $N$ is bounded above by $K \in \mathbb{R}_{>0}$. If $K|Tf|^2\leq 1$ then $f$
is constant. In
particular, if the sectional curvature of $N$ is nonpositive then any soliton
whose target is $N$ must be constant.
\begin{proof} Integrating by parts, we get
\begin{align*}
\int_{\bRm} | \N \tau |^2 G_0 dV
&=\int_{\bRm} \left\langle \N_{\ga} \tau, \N_{\ga} \tau \right\rangle G_0 dV \\
&= - \int_{\bRm} \left\langle  \tau, \lap \tau \right\rangle G_0  dV_g +
\int_{\bRm}\left\langle \frac{x}{2} \hook \N \tau, \tau \right\rangle G_0 dV
\\
&= \int_{\bRm} \left\langle - \lap \tau +  \frac{x}{2} \hook \N \tau  , \tau
\right\rangle G_0 dV\\
&= \int_{\bRm} \left\langle L^f \tau  + R(\tau, Tf_{\ga}) Tf_{\ga}, \tau
\right\rangle G_0 dV\\
&= \int_{\bRm} \left\langle - \tau  + R(\tau, Tf_{\ga}) Tf_{\ga}, \tau
\right\rangle G_0 dV\\
& =\int_{\bRm}\left( -| \tau |^2 + \left\langle R(\tau, Tf_{\ga})Tf_{\ga} , \tau
\right\rangle \right) G_0 dV \\
&\leq  \int_{\bRm} | \tau |^2 \left(K|Tf|^2-1\right) G_0 dV.
\end{align*}
Therefore $\tau$ is parallel and $|\tau|$ is constant. By the soliton equation
and using polar coordinates,
\begin{equation*}
\tau = \prs{ \frac{x}{2}  \hook Tf} = \frac{1}{2} r \del_r f.
\end{equation*}
Therefore $|\tau|^2 = \tfrac{r^2}{4} \brk{\del_r f}^2 \in \mathbb{R}$. Since
this is true for all $r$, we must have that $\del_r f = \tau  = 0$. Thus $f$ is
a constant map.
\end{proof}
\end{prop}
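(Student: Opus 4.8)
The plan is to run a weighted Bochner-type argument for the tension field $\tau$, exploiting that $\tau = \tfrac{x}{2}\hook Tf$ is an eigenvector of $L^f$ with eigenvalue $-1$, as recorded in the corollary above. First I would compute $\int_{\bRm} |\N\tau|^2 G_0\, dV$ by integrating by parts against the Gaussian weight, using $\N_\alpha G_0 = -\tfrac{x^\alpha}{2} G_0$ (cf. \eqref{eq:primidG} with $(x_0,t_0)=(0,1)$). This rewrites the Dirichlet integral as $\int_{\bRm}\langle -\lap\tau + \N_{x/2}\tau,\, \tau\rangle G_0\, dV = \int_{\bRm}\langle L^f\tau + R(\tau, Tf_\alpha)Tf_\alpha,\, \tau\rangle G_0\, dV$, and then $L^f\tau = -\tau$ collapses the right side to $\int_{\bRm}\big(-|\tau|^2 + \langle R(\tau, Tf_\alpha)Tf_\alpha,\, \tau\rangle\big) G_0\, dV$.

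Next I would bound the curvature term pointwise. Since the sectional curvature of $N$ is bounded above by $K$, one has $\langle R(\tau, Tf_\alpha)Tf_\alpha,\, \tau\rangle \leq K\big(|\tau|^2|Tf|^2 - \sum_\alpha \langle \tau, Tf_\alpha\rangle^2\big) \leq K|\tau|^2 |Tf|^2$. Hence $\int_{\bRm}|\N\tau|^2 G_0\, dV \leq \int_{\bRm} |\tau|^2\big(K|Tf|^2 - 1\big) G_0\, dV$, and the hypothesis $K|Tf|^2 \leq 1$ makes the right side $\leq 0$. Therefore $\N\tau \equiv 0$ and $|\tau|$ is constant on $\bRm$.

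Finally I would promote ``$|\tau|$ constant'' to ``$f$ constant'' using the soliton equation in polar coordinates on $\bRm$: $\tau = \tfrac{x}{2}\hook Tf = \tfrac{r}{2}\,\del_r f$, so $|\tau| = \tfrac{r}{2}|\del_r f|$. Letting $r \to 0$, the constant value of $|\tau|$ must be $0$, so $\del_r f \equiv 0$ and $f$ is constant along every ray from the origin, hence constant. The in-particular statement is just the special case of nonpositive sectional curvature, where the curvature term is automatically $\leq 0$ and no bound on $|Tf|$ is needed.

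The step requiring the most care is the justification of the weighted integration by parts on the noncompact domain: one needs sufficient decay of $\tau$ and $\N\tau$ relative to $G_0$ so that boundary terms vanish, e.g.\ by cutting off with a function $\eta_R$ as in the proof of Lemma \ref{lem:primid} and invoking dominated convergence. Under the standing hypothesis $K|Tf|^2 \leq 1$ the energy density is bounded, so $\tau$ grows at most linearly and interior elliptic estimates for the soliton equation control $\N\tau$, which suffices; alternatively one invokes the polynomial energy density growth assumption used elsewhere. Lining up the curvature sign convention with the stated upper bound on sectional curvature is the only other place to be attentive.
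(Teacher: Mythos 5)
Your proposal is correct and follows essentially the same route as the paper's proof: the weighted integration by parts against $G_0$, the identity $L^f\tau = -\tau$, the sectional curvature bound on $\langle R(\tau,Tf_\ga)Tf_\ga,\tau\rangle$, and the polar-coordinate identity $\tau = \tfrac{r}{2}\del_r f$ to conclude $\tau \equiv 0$. Your closing remarks on justifying the integration by parts via cutoffs are a welcome addition but do not change the argument.
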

\begin{defn} \label{mudefn}
Let $f:\bRm\to N$ be a soliton. The bottom of the spectrum of $L^f$ is defined
to be
\begin{equation*}
\mu_1=\inf_V\frac{\int_{\bRm} \langle L^fV,V\rangle G_0 dV}{\int_{\bRm} |V|^2G_0
dV},
\end{equation*}
taken over smooth vector fields $V\in C^\infty_0(f^{*}TN)$ of compact support. 
\end{defn}
We note that this infimum can be taken over Lipschitz vector fields $V$ in the
space $W^{2,2}_f$ as defined in \ref{Wdef}. At first glance there is
nothing stopping this infimum from being $-\infty$, but Theorem
\ref{thm:rayleigh}, which we now prove, shows that if $\mu_1$ is sufficiently
negative, then the soliton in question is unstable. 
\begin{proof}[Proof of Theorem 1.2]
We follow closely the
corresponding Lemma 9.58 in \cite{CM}.  Let $\mu_1(R)$ denote the smallest
Dirichlet eigenvalue of $L^f$ in a ball of radius $R$ centered at the origin. 
Observe that $\mu_1(R)$ is decreasing and $\lim_{R\to\infty}\mu_1(R)=\mu$. 
Choose some $R>0$ big enough such that $\mu_1(R)<-\frac{3}{2}$ and let $V$ be a
nonzero Dirichlet eigenfield $L^fV=\mu_1(R)V$ on the ball $B_R$. Let $h \in
\mathbb{R}$, $y\in\bRm$, and consider the second variation identity for $\calf$
in the directions determined by $V$, $h$ and $y$ as in \ref{prop:2ndvarsol},
giving
\begin{align*}
\FF''(h,y,V) = \int_{B_R}\prs{\langle V,L^f V-2h\tau-(y\hook
Tf)\rangle-h^2|\tau|^2-\frac{1}{2}|y\hook Tf|^2} G_0 dV.
\end{align*}
We apply Young's inequality to the term $-\langle V,y\hook Tf\rangle\leq
\frac{1}{2}(|V|^2+|y\hook Tf|^2)$ and find that this is bounded above by
\begin{align*}
\int_{B_R}\prs{\prs{\frac{1}{2}+\mu_1(R)}|V|^2-2h\langle
V,\tau\rangle-t^2|\tau|^2 }G_0
dV\leq \int_{B_R}-|V|^2-2h \prs{\langle V,\tau\rangle-h^2|\tau|^2 }G_0 dV < 0,
\end{align*}
for all choices of $h$ and $y$. It follows that $f$ is $\calf$-unstable.
\end{proof}

We note that if one could prove the result that if $\mu_1>-\infty$ then there is
a vector field $V\in W^{2,2}$ such that $L^fV=\mu_1 V$, a modification of the
previous proof could show that $\mu_1<-1$ implies $\calf$ instability.

\subsection{Solitons with Spherical Targets}

In this section we will restrict our discussion to solitons $f:\mathbb{R}^m\to
S^n$ with spherical target. In fact, in light of Proposition \ref{prop:gapthm},
there are no nontrivial solitons without some positive sectional curvature on
the target. Our restriction is therefore natural because the sphere is the
simplest manifold for which a finite time singularity in the harmonic map heat
flow can occur.

An interesting observation about the harmonic map heat flow from a compact
manifold into the sphere $S^n$, $n\geq 3$, is that any harmonic map $f:M\to S^n$
is unstable or constant. This is because the second variation of the energy at
such a harmonic map satisfies
\begin{align*}
\frac{d^2}{dt^2} \brk{\mathcal{E}(f_t) }=\int_{\bRm}|\N
V|^2-|Tf|^2|V|^2+|\langle
V,Tf\rangle|^2 dV,
\end{align*}
which, remarkably, is negative when summed over an appropriate basis of
conformal vector fields. Through a similar calculation, we will study the
$\mathcal{F}$-stability of a soliton $f:\mathbb{R}^m\to S^n$ and its
relationship to conformal vector fields. Recall that a conformal vector field
$W$ on the sphere $S^n$ is constructed by fixing some $w\in\mathbb{R}^{n+1}$ and
considering the vector field whose value at $p\in S^n$ is
\begin{align*}
W_p:=w-\langle w,p\rangle p,
\end{align*}
the projection of the constant vector field $w$ to the tangent space at $p$.
We record a preliminary lemma on these vector fields.
\begin{lemma}\label{lemma:confsum}
Let $w_0,w_1,\ldots,w_n$ be any orthonormal basis of $\mathbb{R}^{n+1}$. Let
$W_i$ be the corresponding conformal vector fields on $S^n$ and let $v$ be any
vector tangent to the sphere at some point $p$.
Then
\begin{enumerate}
\item $\sum_{i=0}^n|W_i|^2=n$,
\item $\sum_{i=0}^n\langle W_i,v\rangle ^2=|v|^2$.
\end{enumerate}
\begin{proof}
Given an arbitrary $p\in S^n$, using that $W=w-\langle w,p\rangle p$ we have
\begin{align*}
|W_i|^2&=|w_i|^2-2\langle w_i,p\rangle^2+\langle w,p\rangle^2|p|^2\\&=1-\langle
w,p\rangle^2.
\end{align*}
Hence
\begin{align*}
\sum_{i=0}^n|W_i|^2=(n+1)-\sum\langle w_i,p\rangle^2=n+1-|p|^2=n,
\end{align*}
giving the first claim. Then second claim is a similar calculation, noting that
if $v$ is in the tangent space at $p$ then
\begin{align*}
\langle W_i,v\rangle=\langle w_i-\langle w_i,p\rangle p,v\rangle=\langle
w_i,v\rangle.
\end{align*}
The result follows.
\end{proof}
\end{lemma}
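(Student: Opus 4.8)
The plan is to reduce both identities to the single completeness (Parseval) relation for an orthonormal basis of $\mathbb{R}^{n+1}$, namely that $\sum_{i=0}^{n}\langle w_i,u\rangle^2=|u|^2$ for every $u\in\mathbb{R}^{n+1}$; equivalently, $\sum_{i=0}^{n} w_i\otimes w_i=\Id$ as an endomorphism of $\mathbb{R}^{n+1}$, since the matrix with columns $w_i$ is orthogonal.

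First I would fix an arbitrary point $p\in S^n$ and expand directly from the definition $W_i=w_i-\langle w_i,p\rangle p$: one gets $|W_i|^2=|w_i|^2-2\langle w_i,p\rangle^2+\langle w_i,p\rangle^2|p|^2$, and using $|w_i|=|p|=1$ this collapses to $1-\langle w_i,p\rangle^2$. Summing over $i$ and applying the completeness relation with $u=p$ gives $\sum_{i}|W_i|^2=(n+1)-|p|^2=n$, which is claim (1). For claim (2), the key point is that tangency of $v$ to $S^n$ at $p$ means $\langle v,p\rangle=0$, so $\langle W_i,v\rangle=\langle w_i,v\rangle-\langle w_i,p\rangle\langle p,v\rangle=\langle w_i,v\rangle$; squaring and summing, the completeness relation with $u=v$ yields $\sum_{i}\langle W_i,v\rangle^2=|v|^2$.

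Since every step is an elementary algebraic identity, I do not expect any real obstacle here. The only points requiring a moment's care are invoking the tangency condition $\langle v,p\rangle=0$ in the second part and recognizing that $\sum_i\langle w_i,u\rangle^2$ is exactly $|u|^2$ precisely because $\{w_i\}$ is orthonormal; one could alternatively carry out the whole computation coordinate-free by contracting the identity $\sum_i w_i\otimes w_i=\Id$ against $p\otimes p$ and against $v\otimes v$ respectively.
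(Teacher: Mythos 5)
Your proposal is correct and takes essentially the same route as the paper: expand $|W_i|^2=1-\langle w_i,p\rangle^2$ from the definition, sum using the completeness relation $\sum_i\langle w_i,u\rangle^2=|u|^2$, and use tangency $\langle v,p\rangle=0$ to reduce $\langle W_i,v\rangle$ to $\langle w_i,v\rangle$. Your explicit identification of the Parseval identity $\sum_i w_i\otimes w_i=\Id$ as the single engine behind both claims is a slightly cleaner framing of what the paper does implicitly, but the argument is the same.
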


\begin{lemma} \label{confseclemma10} Let $W$ be a conformal vector field on the
sphere corresponding to
a choice $w\in\mathbb{R}^{n+1}$, and let $\omega(p) := \langle w,p\rangle$.
Then for any $f:M\to S^n$ we have
\begin{enumerate}
\item $\N_{\ga} W = - \gw Tf_{\ga}$.
\item $\lap W = - \langle W, Tf_{\ga} \rangle Tf_{\ga} - \gw \tau $.
\end{enumerate}
\begin{proof}
Let $\rN$ denote the Levi-Civita connection on $S^n$, and $\bN$ denote the
Levi-Civita connection on $\mathbb{R}^{n+1}$. The undecorated connection will be
the induced connection on $TM \ten f^* (TN)$. Then we have that $\rN (\cdot) =
\prs{ \bN (\cdot)}^T$, where the $T$ superscript indicates projection onto the
tangent space of the sphere. We compute
\begin{align*}
\N_{\ga} W
&= f^{i}_{\ga}  \rN_i W \\
&= f^{i}_{\ga} \left( \bN_i W \right)^{T} \\
&= f^{i}_{\ga} \left( \bN_i \lb V^j \del_j - V_k x^k x^j \del_j  \rb \right)^{T}
\\
&= f^{i}_{\ga} \left(  (- V_k \delta_i^k x^j  - V_k x^k \delta_i^j ) \del_j
\right)^{T} \\
&= f^{i}_{\ga}  \left(  - V_i x^j \del_j   - V_k x^k \del_i \right)^{T} \\
&= f^{i}_{\ga} \left( - V_k x^k \del_i \right) \\
&=- \gw f^i_{\ga} \del_i.
\end{align*}
This proves the first claim. Next we have that
\begin{align*}
\N_{\ga} \N_{\ga} W
&= - \N_{\ga} \lb \gw Tf_{\ga} \rb \\
&= -( \N_{\ga} \gw ) Tf_{\ga} - \gw ( \N_{\ga} Tf_{\ga}) \\
&= -( (\del_{\ga} f^{i}) \rN_{i} \gw ) Tf_{\ga} - \gw \tau \\
&= - (\del_{\ga} f^{i})  (\bN_{i} (V^k x^k))^T Tf_{\ga} - \gw \tau \\
&= - (\del_{\ga} f^{i})   (V_i)^T Tf_{\ga} - \gw \tau \\
&= - (\del_{\ga} f^{i})  W_i Tf_{\ga} - \gw \tau \\
&= - \langle W, Tf_{\ga} \rangle Tf_{\ga} - \gw \tau.
\end{align*}
The result follows.
\end{proof}
\end{lemma}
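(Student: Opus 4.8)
The plan is to compute everything in the flat ambient space by viewing $S^n \subset \mathbb{R}^{n+1}$ and using the standard relation between Levi--Civita connections: writing $\bN$ for the flat connection on $\mathbb{R}^{n+1}$ and $\rN$ for the induced connection on $S^n$, one has $\rN_X Y = \prs{\bN_X Y}^T$, the orthogonal projection onto the tangent space of the sphere. The connection $\N$ appearing in the statement is the pullback of $\rN$ along $f$ on $TM \ten f^*(TN)$, so $\N_{\ga} W = f^i_{\ga}\, \rN_i W = \prs{\bN_{Tf_{\ga}} W}^T$. I would then write $W$ in ambient coordinates via $W = w - \gw(p)\, p$, where $p$ denotes the position function on $S^n$ and $\gw(p) = \langle w,p\rangle$.

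For the first identity, differentiate $W$ with the flat connection: since $\bN_X p = X$, one gets $\bN_{Tf_{\ga}} W = -\langle w, Tf_{\ga}\rangle\, p - \gw\, Tf_{\ga}$. As $p$ is the outward unit normal to $S^n$ its tangential projection vanishes, whereas $Tf_{\ga}$ is already tangent to the sphere, so the projection drops out and $\N_{\ga} W = -\gw\, Tf_{\ga}$. For the second identity I would simply trace the first: $\lap W = \N_{\ga}\N_{\ga} W = -\N_{\ga}\prs{\gw\, Tf_{\ga}} = -\prs{\N_{\ga}\gw}\, Tf_{\ga} - \gw\,\tau$, using $\tau = \tr_g (\N Tf)$. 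It then remains to identify $\N_{\ga}\gw$: the ambient gradient of the linear height function $\gw = \langle w, p\rangle$ is the constant vector $w$, whose tangential part along $S^n$ is precisely $W = w - \gw\, p$, hence $\N_{\ga}\gw = f^i_{\ga}\, W_i = \langle W, Tf_{\ga}\rangle$. Substituting yields $\lap W = -\langle W, Tf_{\ga}\rangle\, Tf_{\ga} - \gw\,\tau$, as claimed.

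The argument has no analytic content — the identities are pointwise and algebraic once the relation $\rN = (\bN)^T$ is in place. The only point demanding care, and the one I would treat as the ``main obstacle,'' is the bookkeeping between ambient and intrinsic objects: one must keep careful track of which ambient vectors are already tangent to $S^n$ (so that the projection $(\cdot)^T$ may be dropped), and one must recognize that the intrinsic gradient on $S^n$ of the height function $\gw$ is exactly the conformal field $W$ itself. Beyond that the proof is a short direct calculation.
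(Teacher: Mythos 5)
Your proposal is correct and follows essentially the same route as the paper: both compute $\N_{\ga}W$ by pulling back the sphere connection, writing it as the tangential projection of the flat ambient derivative of $W = w - \gw\, p$, and both obtain (2) by tracing (1) and identifying $\N_{\ga}\gw = \langle W, Tf_{\ga}\rangle$ via the fact that the tangential part of the constant field $w$ is $W$ itself. The only difference is cosmetic — you work coordinate-free where the paper writes out ambient indices.
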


\begin{prop} \label{confsecprop20} Let $W$ be a conformal vector field on the
sphere $S^n$ and let
$f:\mathbb{R}^m\to S^n$ be a soliton.  Then
\begin{equation*}
L^f W = 2\langle W,Tf_\alpha\rangle Tf_\alpha-|Tf|^2 W,
\end{equation*}
and consequently
\begin{align*}
\int_{\bRm} \ip{ L^fW, W } G_0 dV & =  \int_{\bRm} -|Tf|^2 |W|^2 + 2 |
\langle W, Tf \rangle |^2 G_0 dV.
\end{align*}

\begin{proof}
Recall that $L^f W=-\Delta W-R(W,Tf_\alpha)Tf_\alpha+\frac{1}{2}\N_xW$.  Using
Lemma \ref{confseclemma10} we have
\begin{align*}
-\Delta W=\langle W,Tf_\alpha\rangle Tf_\alpha+\omega\tau.
\end{align*} 
Next, using the well-known formula for the curvature of the sphere we have
\begin{align*}
-R(W, Tf_{\ga}) Tf_{\ga}
&= -|Tf|^2 W + \langle W , Tf_{\ga} \rangle Tf_{\ga}.
\end{align*}
Lastly, using Lemma \ref{confseclemma10} again yields $\frac{x^{\ga}}{2}
\N_{\ga} W=-
\gw \left( \frac{x}{2} \right) \hook Tf $.  But by the
soliton identity this is $-\omega\tau$. Adding these gives the result.
\end{proof}
\end{prop}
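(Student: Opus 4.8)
The plan is to compute $L^f W$ term by term, using the formula $L^f W = -\Delta W - R(W, Tf_\ga) Tf_\ga + \N_{x/2} W$ valid for a soliton based at $(0,1)$, and feeding in the two identities of Lemma \ref{confseclemma10} together with the explicit curvature tensor of the round sphere. Since $W$ here means the section $p \mapsto W_{f(p)}$ of $f^*(TS^n)$, the relevant derivatives $\N_\ga W$ and $\Delta W$ are exactly the ones computed in that lemma.

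First I would handle the Laplacian term: part (2) of Lemma \ref{confseclemma10} gives $\Delta W = - \ip{W, Tf_\ga} Tf_\ga - \omega \tau$, where $\omega(p) = \ip{w,p}$, hence $-\Delta W = \ip{W, Tf_\ga} Tf_\ga + \omega \tau$. Next, the curvature of the unit sphere is $R(X,Y)Z = \ip{Y,Z} X - \ip{X,Z} Y$ in the sign convention fixed by \eqref{eq:comutform}, so summing over $\ga$ gives $R(W, Tf_\ga) Tf_\ga = \brs{Tf}^2 W - \ip{W, Tf_\ga} Tf_\ga$ and therefore $-R(W, Tf_\ga) Tf_\ga = - \brs{Tf}^2 W + \ip{W, Tf_\ga} Tf_\ga$. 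Finally, part (1) of Lemma \ref{confseclemma10} gives $\N_\ga W = -\omega Tf_\ga$, so the drift term is $\N_{x/2} W = \tfrac{x^\ga}{2} \N_\ga W = -\omega \prs{\tfrac{x}{2} \hook Tf} = -\omega \tau$, where the last equality is the soliton equation $\tau = \tfrac{x}{2}\hook Tf$. Adding the three contributions, the $\pm \omega \tau$ terms cancel and I obtain $L^f W = 2 \ip{W, Tf_\ga} Tf_\ga - \brs{Tf}^2 W$, as claimed. For the integral identity I would then simply pair this pointwise formula with $W$, obtaining $\ip{L^f W, W} = 2 \ip{W, Tf_\ga} \ip{Tf_\ga, W} - \brs{Tf}^2 \brs{W}^2 = 2 \brs{\ip{W, Tf}}^2 - \brs{Tf}^2 \brs{W}^2$ (summation over $\ga$ understood), and integrate against $G_0\,dV$.

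There is no substantial obstacle here beyond bookkeeping: one must use the same curvature convention throughout as in the definition of $L^f$ and in \eqref{eq:comutform}, and one must retain the trace term appearing when summing $R(W, Tf_\ga) Tf_\ga$ over $\ga$ — it is the contraction $\ip{Tf_\ga, Tf_\ga} = \brs{Tf}^2$ that produces the $\brs{Tf}^2 W$ term. Once the signs are tracked correctly the whole computation is a direct substitution of Lemma \ref{confseclemma10} and the soliton identity.
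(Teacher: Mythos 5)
Your proposal is correct and follows exactly the paper's argument: both decompose $L^fW$ into the three terms of its definition, substitute the two identities of Lemma \ref{confseclemma10} and the standard sphere curvature formula, and observe the cancellation of the $\pm\omega\tau$ terms via the soliton equation. The only (trivial) addition is that you spell out the pointwise pairing with $W$ to get the integral identity, which the paper leaves implicit.
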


\begin{cor}\label{prop:solitonintospheres}
Let $f:\mathbb{R}^m\to S^n$ be a soliton. If there is a conformal vector field
which is perpendicular to the image of $Tf$ at each point then $f$ is either
unstable or constant.

\begin{proof}
Let $W$ be such a vector field. Then by Proposition \ref{confsecprop20} we have
\begin{align*}
\int_{\bRm}\langle L^fW,W\rangle G_0 dV=-\int_{\bRm}|W|^2|Tf|^2G_0 dV\leq 0,
\end{align*}
since by assumption $\langle W,Tf\rangle=0$. Now if $V$ is the pushforward of
any vector field from $\mathbb{R}^m$, then $\langle V,W\rangle=0$. In
particular, $W$ is orthogonal to the eigenspaces associated to the pushforward
of constant vector fields and the pushforward of the position vector field. If
$f$ is stable these are the only negative eigendirections, but on the other hand
if $f$ is not constant we would have strict negativity of $\int\langle
L^fW,W\rangle G_0 dV$.
\end{proof}
\end{cor}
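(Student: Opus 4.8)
The plan is to feed the given conformal vector field $W$ — viewed as a section of $f^{*}TS^{n}$ via pullback along $f$, exactly as in Lemmas \ref{confseclemma10} and \ref{confsecprop20} — directly into the second variation formula of Proposition \ref{prop:2ndvarsol} and read off instability. Recall that, by Definition \ref{def:Fstable}, $f$ fails to be $\FF$-stable as soon as one exhibits a single $X\in W^{2,2}_{f}$ with $\FF''(q,V,X)<0$ for \emph{every} $q\in\mathbb{R}$ and \emph{every} constant vector field $V\in T\bRm$; the claim is that $X=W$ does the job.

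The key observation is that the hypothesis $\langle W,Tf\rangle\equiv 0$ says $\langle W, Tf(Z)\rangle\equiv 0$ for \emph{any} vector field $Z$ on $\bRm$, so in particular $\langle W,\tau\rangle=\langle W,Tf(\tfrac{x}{2})\rangle\equiv 0$ and $\langle W,V\hook Tf\rangle\equiv 0$. Applying Proposition \ref{prop:2ndvarsol} with $t_{0}=1$, $\dot t_{0}=q$, $\dot x_{0}=V$, $\dot f=W$, all the basepoint cross terms drop out and one is left with
\begin{equation*}
\FF''(q,V,W)=\int_{\bRm}\langle W,L^{f}W\rangle\,G_{0}\,dV-\int_{\bRm}\Big(q^{2}|\tau|^{2}+\tfrac{1}{2}|V\hook Tf|^{2}\Big)G_{0}\,dV.
\end{equation*}
Next, Proposition \ref{confsecprop20} combined once more with $\langle W,Tf_{\ga}\rangle=0$ gives $\int_{\bRm}\langle W,L^{f}W\rangle\,G_{0}\,dV=-\int_{\bRm}|Tf|^{2}|W|^{2}\,G_{0}\,dV$, so
\begin{equation*}
\FF''(q,V,W)=-\int_{\bRm}|Tf|^{2}|W|^{2}\,G_{0}\,dV-\int_{\bRm}\Big(q^{2}|\tau|^{2}+\tfrac{1}{2}|V\hook Tf|^{2}\Big)G_{0}\,dV\le 0
\end{equation*}
for all $q$ and $V$.

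It remains only to upgrade $\le 0$ to $<0$ when $f$ is nonconstant. If $\int_{\bRm}|Tf|^{2}|W|^{2}G_{0}\,dV=0$ then, as the integrand is continuous and nonnegative and $G_{0}>0$, at every point either $Tf=0$ or $|W|^{2}=1-\langle w,f\rangle^{2}=0$, the latter forcing $f=\pm w$. Thus on the open set $\{Tf\neq 0\}$ the map $f$ takes values in the two-point set $\{w,-w\}$, hence is locally constant there and has $Tf=0$ there — a contradiction unless $\{Tf\neq 0\}=\emptyset$, i.e. $f$ is constant. Therefore, if $f$ is nonconstant, $\FF''(q,V,W)<0$ for every $q$ and $V$, so $f$ is $\FF$-unstable, hence entropy unstable by Theorem \ref{thm:main1}. (One could instead phrase the end spectrally, in the spirit of \cite{CM}: $W$ is $L^{2}(G_{0}\,dV)$-orthogonal to every pushforward of a vector field from $\bRm$, so in particular to the eigenfields of $L^{f}$ with eigenvalues $-\tfrac{1}{2}$ and $-1$ coming from constant and position fields, yet it has a strictly negative Rayleigh quotient, which is incompatible with stability.)

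The one step requiring a little care — and the main, if minor, obstacle — is verifying $W\in W^{2,2}_{f}$, so that Proposition \ref{prop:2ndvarsol} genuinely applies. This follows from $|W|\le 1$ (compactness of $S^{n}$), $|\N W|=|\langle w,f\rangle|\,|Tf|\le|Tf|$ by Lemma \ref{confseclemma10}(1), and $|L^{f}W|=|Tf|^{2}|W|\le|Tf|^{2}$ by Proposition \ref{confsecprop20}, together with the standing assumption of polynomial energy density growth of $f$, which makes $\int_{\bRm}\big(|W|^{2}+|\N W|^{2}+|L^{f}W|^{2}\big)G_{0}\,dV$ finite.
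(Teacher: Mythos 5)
Your proof is correct, and it is worth noting where it diverges from the paper. The shared core is the computation $\int_{\bRm}\langle L^fW,W\rangle G_0\,dV=-\int_{\bRm}|Tf|^2|W|^2G_0\,dV$ from Proposition \ref{confsecprop20} together with the perpendicularity hypothesis. Where you differ is in how instability is extracted from this. The paper argues spectrally: it observes that $W$ is $L^2(G_0\,dV)$-orthogonal to the pushforwards of constant vector fields and of the position field, and then invokes the (unproved, Colding--Minicozzi-style) assertion that for a stable soliton these are the \emph{only} negative eigendirections of $L^f$. You instead verify the negation of Definition \ref{def:Fstable} directly: plugging $\dot f=W$, $\dot t_0=q$, $\dot x_0=V$ into Proposition \ref{prop:2ndvarsol}, the cross terms $\langle W,\tau\rangle$ and $\langle W,V\hook Tf\rangle$ vanish because $W\perp \mathrm{im}(Tf)$ pointwise and $\tau=\tfrac{x}{2}\hook Tf$, so $\FF''(q,V,W)\le -\int|Tf|^2|W|^2G_0\,dV$ for \emph{all} $(q,V)$. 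This is cleaner and more self-contained, since it bypasses the implicit spectral classification entirely; your rigidity step (if $\int|Tf|^2|W|^2G_0\,dV=0$ then $f$ maps the open set $\{Tf\neq0\}$ into the discrete set $\{\pm w\}$, forcing it to be empty) also makes precise the strictness claim that the paper only asserts. Your verification that $W\in W^{2,2}_f$ is a genuine gap in the paper's own proof that you fill, though it does lean on polynomial energy density growth, which is not stated in the corollary's hypotheses (it is, however, the paper's standing convention for solitons in this part). Two minor caveats: the conclusion ``hence entropy unstable by Theorem \ref{thm:main1}'' needs the non-cylindrical hypothesis of that theorem, so strictly speaking what you have proved unconditionally is $\FF$-instability, which is all the corollary is used for; and none of this affects the correctness of the main argument.
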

\begin{cor}\label{prop:conformsum}
Let $w_i$, $0\leq i\leq n$ be an orthonormal basis of $\mathbb{R}^{n+1}$ and let
$W_i$ be the corresponding conformal vector fields on $S^n$. Then
\begin{equation*}
\sum_{i=0}^n\int_{\bRm} \ip{ L^fW_i,W_i } G_0 dV=(2-n)\int_{\bRm}\brs{Tf}^2 G_0
dV.
\end{equation*}
\begin{proof}
This follows by combining Lemma \ref{lemma:confsum} and Proposition
\ref{confsecprop20}.
\end{proof}
\end{cor}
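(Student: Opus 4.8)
The plan is to apply Proposition \ref{confsecprop20} to each conformal vector field $W_i$ and then sum the resulting $n+1$ identities, using the pointwise summation formulas of Lemma \ref{lemma:confsum} to collapse the sums on the right-hand side.

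First I would recall from Proposition \ref{confsecprop20} that for each $i$,
\begin{equation*}
\int_{\bRm} \ip{ L^f W_i, W_i } G_0 dV = \int_{\bRm} \left( - \brs{Tf}^2 \brs{W_i}^2 + 2 \brs{ \ip{ W_i, Tf } }^2 \right) G_0 dV,
\end{equation*}
where $\brs{\ip{W_i, Tf}}^2 = \sum_{\ga} \ip{W_i, Tf_{\ga}}^2$ denotes the squared norm of the $1$-form $\ga \mapsto \ip{W_i, Tf_{\ga}}$ on $\bRm$. Summing over $i = 0, \dots, n$ and interchanging the two finite sums, the integrand on the right becomes
\begin{equation*}
- \brs{Tf}^2 \sum_{i=0}^n \brs{W_i}^2 + 2 \sum_{\ga} \sum_{i=0}^n \ip{W_i, Tf_{\ga}}^2.
\end{equation*}

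Now I would invoke Lemma \ref{lemma:confsum}: part (1) gives $\sum_{i=0}^n \brs{W_i}^2 = n$ pointwise, while part (2), applied with $v = Tf_{\ga}$ (which is tangent to $S^n$ at the point $f(x)$), gives $\sum_{i=0}^n \ip{W_i, Tf_{\ga}}^2 = \brs{Tf_{\ga}}^2$ for each $\ga$; summing the latter over $\ga$ yields $\sum_{\ga} \sum_{i=0}^n \ip{W_i, Tf_{\ga}}^2 = \brs{Tf}^2$. Substituting these into the integrand reduces it to $-n\brs{Tf}^2 + 2\brs{Tf}^2 = (2-n)\brs{Tf}^2$, which gives the claimed formula after integrating against $G_0$.

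There is no genuine obstacle here, as the statement is a formal consequence of the two preceding results; the only point requiring a moment of care is the bookkeeping of the source index $\ga$, namely that one must apply the pointwise identity (2) of Lemma \ref{lemma:confsum} separately to each tangent vector $Tf_{\ga}$ and only afterward sum over $\ga$, rather than attempting to apply it to $Tf$ as a single object.
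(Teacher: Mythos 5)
Your proof is correct and is exactly the argument the paper intends: the paper's own proof is the one-line statement that the corollary follows by combining Lemma \ref{lemma:confsum} and Proposition \ref{confsecprop20}, and you have simply written out the summation over $i$ and the index bookkeeping in $\ga$ explicitly. No issues.
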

We see from this last corollary that good lower bounds on the spectrum of
$L^f$, which as we have seen is intimately related to the notion of
$\mathcal{F}$-stability, gives an upper bound on the entropy
$\int|Tf|^2G_0dV$. Specifically, we are now equipped to prove the rough upper
bound on the entropy given in Theorem \ref{thm:main2}.
\begin{proof}[Proof of Theorem \ref{thm:main2}]
Let $f:\bRm\to S^n$, $n\geq 3$, be a stable soliton. Let $W_i$ be any basis of
conformal vector fields as before. By Theorem \ref{thm:rayleigh} we have
$\int\langle
LW_i,W_i\rangle G_0dV\geq -\frac{3}{2}\int|W_i|^2G_0dV$, while by
\ref{prop:conformsum} we get
\begin{align*}
-\frac{3n}{2}=-\frac{3}{2}\int_{\bRm}\sum_{i=0}^n|W_i|^2G_0dV\leq
(2-n)\int_{\bRm}|Tf|^2G_0dV.
\end{align*}
The result follows.
\end{proof}

\begin{rmk} It is shown in \cite{Struwe1}, that small entropy suffices to
establish regularity of harmonic map heat flow.  This result shows that, for
maps into spheres, stable singularities cannot have entropy which is too large.
\end{rmk}

It is an open problem to determine exactly when a homotopy class of maps
$f:S^m\to S^n$, $m\geq 3$ contains a harmonic representative. When $m=n\leq 7$
or $m=n=9$ this problem is answered in the affirmative through a harmonic join
construction, as explained in Chapter 9 of \cite{ER}.  It may be the case that
for sufficiently high degree maps from $S^n \to S^n$, the entropy as well as the
entropy of any blowup model, is large.  If so Theorem \ref{thm:main2} would
imply that all such singularities are unstable, and one could then use a generic
harmonic map heat flow to construct harmonic maps in these homotopy classes.

\section{Dimensional Augmentation and Stability} \label{s:dimaug}

The idea of entropy stability aims to allow perturbations of a given map to move
past all but the ``essential,'' stable singularities of a given harmonic map
heat flow.  In this section we extend this strategy by allowing a further
embedding into a higher dimensional image before perturbing.  As we will see
this is particularly well-suited to the case of the sphere, and motivates
certain conjectures on the structure of solitons mapping into spheres.

\subsection{Convex domains and harmonic map flow}\label{ss:existconvVn}

In this subsection we record some notions of convexity and discuss their
relationship to harmonic maps, and the singularity formation of harmonic map
flow, in preparation for Theorem \ref{ltethm}.

\begin{defn}
Let $(M,g)$ be a smooth Riemannian manifold. Given any open set $U \subset M$, a
function $F \in C^2(U)$ is \emph{convex} if the Hessian of $F$ is nonnegative on
$U$.  This convexity is \emph{strict} if nonnegativity is replaced with strict
positivity.  Given $(M, \del M, g)$ a manifold with
boundary, we say that a function $F : M \to \mathbb R$ is \emph{boundary
defining}
if it is strictly convex on $M$ and there exists $c \in \mathbb R$ such that
$M = F^{-1}(-\infty, c]$ and $\del M = F^{-1}(c)$.
\end{defn}

\begin{defn} A set $U \subset M$ is \emph{convex supporting} if
for all $K \subset U$ compact, $K$ admits a strictly convex function $F_K$. Note
that these functions do note have to be defined on all of $U$. A \emph{maximal
open convex supporting set} is one which is not properly contained in any other
open convex supporting set.
\end{defn}

It is important to note that the notion of a convex supporting set is strictly
more general that that of a domain admitting a convex function.  An example of
such a maximal open convex supporting set is constructed in (\cite{JXY},
pp.5-9), and we review this construction here for convenience.

\begin{prop}[\S 2.2 and Theorem 2.1 of \cite{JXY}, pp.6-9]\label{prop:} 
Given $(S^n, g) \subset \mathbb R^{n+1}$, let 
\begin{equation*}
H_+^n := \{ (x_i)_{i=1}^{n+1} \in S^n : x_1 = 0, x_2 \geq 0 \}.
\end{equation*}
Then $S^n \setminus H_+^n$ is a maximal open convex supporting
subset of $S^n$.
\begin{proof}[Sketch of proof] First we show that $S^n \setminus H_+^n$ is a
convex supporting
subset of of $S^n$, and address the maximality later.  We will take an arbitrary
compact set $K\subset \prs{S^n \setminus H_+^n}$ and displaying a convex
function over $K$. To do so consider the projection
\begin{align*}
\pi 
&: S^n \to \bar{D}^2 \\
&: (x_i)_{i=1}^{n+1} \mapsto (x_1,x_2).
\end{align*}
Where $D^2$ is the $2$-disk and the overline indicates taking its closure.
Observe that by construction, $x \in \prs{S^n \setminus H_+^n}$ if and only if
$\pi(x)$ is contained in $D^2$ with the radius connecting the origin to $(0,1)$.
We use this projection and consider the families of equivalence classes in terms
of spherical coordinates, with 
\begin{equation*}
\pi(x) = \prs{\nu_x \sin \varphi_x , \nu_x \cos \varphi_x}.
\end{equation*}
Over $K$, we have that there is some constant $c \in (0,1)$ so that $c > \nu_x$
for all $x \in K$. Then if we choose $\kappa \in \mathbb{R}_{>0}$ sufficiently
large, the function
\begin{equation*}
F_K(x) := \kappa^{-1} \exp \brk{ \kappa \prs{\varphi_x + \arcsin \prs{
\frac{c}{\nu_x}}}}.
\end{equation*}
is strictly convex over $K$.

Now we show that $S^n \setminus H_+^n$ is maximal.  Assume that there is some
larger open convex
supporting set, $W^n$ containing $S^n \setminus H_+^n$. This implies that there
is some $\theta
\in \left(0,\tfrac{\pi}{2} \right]$ so that the point $(0, \sin \theta, y \cos
\theta) \in S^1 \times S^{n-1}$ resides in $W^n$.  Observe that the closed
geodesic
\begin{align*}
\gamma_t
&: \mathbb{R} \to S^1 \times S^{n-1}\\
\gamma_t & := \prs{\sin t, \cos t \sin \theta, y \cos t \cos \theta}
\end{align*}
lies in $W^n$ and has unit speed. We can take an open neighborhood $U$ about
$\gamma(\mathbb{R})$ and construct a strictly convex function $F_U$ over $U$ as
above.
\begin{equation*}
\tfrac{d^2}{dt^2} \brk{F_U \circ \gamma_t} = \N^2 F_U\prs{\tfrac{\del
\gamma_t}{\del t},
\tfrac{\del \gamma_t}{\del t} } > 0.
\end{equation*}
But since $\prs{F_U \circ \gamma_t}$ is periodic it achieves its maximum, a
contradiction.
\end{proof}
\end{prop}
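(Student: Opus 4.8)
The plan is to establish the two defining properties of a maximal open convex supporting set in turn: first that $S^n\setminus H_+^n$ is convex supporting, and then that no open convex supporting subset of $S^n$ properly contains it.

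For the convex supporting property, I would fix a compact set $K\subset S^n\setminus H_+^n$ and produce a strictly convex function on a neighborhood of $K$. The device is the projection $\pi:S^n\to\overline{D}^2$, $(x_i)_{i=1}^{n+1}\mapsto(x_1,x_2)$; the key observation is that $\pi(H_+^n)$ is precisely the radial segment $\sigma=\{(0,t):0\leq t\leq 1\}$, so $\pi(K)$ is a compact subset of the slit disk $\overline{D}^2\setminus\sigma$, on which there are well-defined smooth polar-type coordinates $x_1=\nu\sin\varphi$, $x_2=\nu\cos\varphi$ with $\nu>0$ and $\varphi\in(0,2\pi)$. Since $H_+^n$ is closed and $K$ is compact, a small enough neighborhood of $K$ is disjoint from $H_+^n$, so $\nu\circ\pi$ and $\varphi\circ\pi$ are smooth there, and by compactness $\nu\geq 2c$ on $K$ for some $c\in(0,\tfrac12)$. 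The candidate is $F_K:=\kappa^{-1}\exp(\kappa h)$ with $h:=\varphi\circ\pi+\arcsin(c/(\nu\circ\pi))$, whose Hessian in the round metric is $\exp(\kappa h)(\N^2 h+\kappa\,dh\otimes dh)$. I would compute $\N^2 h$ in these coordinates and verify that it is positive semidefinite on $\ker(dh)$ — this is exactly what the correction term $\arcsin(c/\nu)$ is engineered to arrange, compensating for the failure of $\varphi$ alone to be convex — and then choose $\kappa=\kappa(K)$ large enough that the rank-one term $\kappa\,dh\otimes dh$ makes the full expression positive definite over $K$.

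For maximality, suppose toward a contradiction that $W^n$ is an open convex supporting set properly containing $S^n\setminus H_+^n$. Then $W^n$ meets $H_+^n$, and since $W^n$ is open and every point of $H_+^n$ is a limit of points of $H_+^n$ with positive second coordinate, $W^n$ contains a point $p=(0,\sin\theta,(\cos\theta)\,y)$ with $\theta\in(0,\tfrac\pi2]$ and $y\in S^{n-1}$. Writing $e_1=(1,0,\dots,0)$, I would consider the unit-speed closed geodesic $\gamma_t=(\cos t)\,p+(\sin t)\,e_1=(\sin t,(\cos t)\sin\theta,(\cos t\cos\theta)\,y)$ of period $2\pi$ through $p$, and check that the compact set $\gamma([0,2\pi])$ lies in $W^n$: for $\sin t\neq 0$ the first coordinate of $\gamma_t$ is nonzero so $\gamma_t\notin H_+^n$, at $t=\pi$ one has $\gamma_\pi=-p$ with second coordinate $-\sin\theta<0$ so $\gamma_\pi\notin H_+^n$, and $\gamma_0=\gamma_{2\pi}=p\in W^n$; hence $\gamma([0,2\pi])\subset(S^n\setminus H_+^n)\cup\{p\}\subset W^n$. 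Taking an open $U\supset\gamma([0,2\pi])$ carrying a strictly convex function $F_U$, the function $u(t):=F_U(\gamma_t)$ satisfies $u''(t)=\N^2 F_U(\dot\gamma_t,\dot\gamma_t)>0$ since $\gamma$ is a geodesic and $F_U$ is strictly convex; thus $u$ is a strictly convex $2\pi$-periodic function on $\mathbb{R}$, which is impossible. This contradiction proves maximality.

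I expect the Hessian computation in the first part to be the main obstacle. One must write the round metric of $S^n$ in the coordinates adapted to $\pi$ — the $(\nu,\varphi)$ block together with the directions along the fibres of $\pi$ — differentiate $\varphi+\arcsin(c/\nu)$ twice, and identify exactly why the mixed and $\arcsin$ contributions keep $\N^2 h$ nonnegative on $\ker(dh)$, so that the standard exponential trick applies. Once that is in hand, the choice of $\kappa$ and the maximality argument are short and essentially formal.
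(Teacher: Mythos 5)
Your proposal follows essentially the same route as the paper's sketch: the same projection $\pi$ to the slit disk, the same test function $\kappa^{-1}\exp\brk{\kappa(\varphi + \arcsin(c/\nu))}$ for the convex supporting part, and the same periodic-geodesic contradiction for maximality. You even supply two details the paper elides — the explicit check that the closed geodesic through $p$ lies entirely in $W^n$, and the correct direction of the bound on $\nu$ over $K$ (the paper's ``$c > \nu_x$'' must be a typo, since $\arcsin(c/\nu_x)$ requires $c \leq \nu_x$) — while, like the paper, deferring the Hessian verification to \cite{JXY}.
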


By an elementary argument, Gordon shows \cite{Gordon} that smooth harmonic maps
into convex supporting domains are constant.  We adapt this argument to show
that solitons mapping into convex
supporting domains are also constant.

\begin{prop} Any soliton mapping into a convex supporting domain is constant.
\begin{proof}  We give two proofs.  On the one hand, a soliton satisfies the
requirements of an $n$-obstruction.  But the proof of Theorem \ref{thm:5.4.3LW}
shows that $n$-obstructions do not exist.  On the other hand we can give a
direct proof inspired by the entropy functional.  A direct calculation 
shows that if $F$ is the given convex function, one has
\begin{align*}
\Delta(F\circ f)\geq C|Tf|^2+\ip{ \N F,\frac{x}{2} \hook Tf}.
\end{align*}
Now fix some $R > 0$, let $\eta$ denote a cutoff function for $B_R$ and then
integrate by parts against a Greens function, and estimate with the
Cauchy-Schwarz inequality to yield
\begin{align*}
 \int_{B_R} \brs{Tf}^2 \eta^2 G dV \leq&\ \int_{B_R} \left[ \Delta (F
\circ f) - \ip{ \N F, \frac{x}{2} \hook Tf } \right] \eta G
dV\\
=&\ \int_{B_R} \langle \N (F \circ f), \N \eta \rangle \eta G dV\\
\leq&\ C \int_{B_R \backslash B_{\frac{R}{2}}} \brs{Tf} \brs{\N \eta} \eta
G dV\\
\leq&\ C \ge \int_{B_R \backslash B_{\frac{R}{2}}} \brs{Tf}^2 \eta^2 G dV
+ \frac{C}{\ge} \int_{B_R \backslash B_{\frac{R}{2}}} \brs{\N \eta}^2 G
dV\\
\leq&\ \frac{1}{2} \int_{B_R} \brs{Tf}^2 \eta^2 G dV + C R^{n-2} e^{- R^2}.
\end{align*}
Rearranging this and choosing $R$ sufficiently large shows that $Tf = 0$, as
required.
\end{proof}
\end{prop}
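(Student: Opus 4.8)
The plan is to adapt Gordon's classical argument \cite{Gordon} — that a harmonic map into a convex supporting domain is constant — to the flat source $\mathbb{R}^m$, replacing the underlying measure with the Gaussian $G_0\,dV$ which already governs the soliton. Two routes present themselves. The shortest is to observe that a soliton satisfies the hypotheses of an $n$-obstruction in the sense of Lin--Wang: by Lemma \ref{lem:char1} its associated self-similar flow is ancient and self-similar, and these are exactly the structural features that enter their definition. Since the proof of Theorem \ref{thm:5.4.3LW} shows that no $n$-obstruction maps into a convex supporting domain, the soliton must be constant, with no new computation required. I would nevertheless also record a self-contained second proof, both because it is short and because it makes transparent why the Gaussian weighting is the natural one here.

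For the direct argument, let $F$ be a strictly convex function defined near the image of $f$, supplied by the convex supporting hypothesis. The first step is the elementary identity $\Delta(F\circ f) = \Hess F(Tf_\alpha, Tf_\alpha) + \langle \N F, \tau\rangle$; using $\Hess F\geq c\, g_{\Euc}$ with $c>0$ on the compact set in question and the soliton equation $\tau = \tfrac{x}{2}\hook Tf$, this gives
\[
\Delta(F\circ f)\ \geq\ c\,|Tf|^2 + \langle \N F,\ \tfrac{x}{2}\hook Tf\rangle .
\]
The second, decisive step is to integrate this against $\eta^2 G_0$ over $B_R$, where $\eta$ is a cutoff equal to $1$ on $B_{R/2}$, supported in $B_R$, with $|\N\eta|\leq C/R$, and $G_0 = (4\pi)^{-m/2}e^{-|x|^2/4}$. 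Since $\N_\alpha G_0 = -\tfrac{x^\alpha}{2}G_0$, integrating the term $\int\Delta(F\circ f)\,\eta^2 G_0$ by parts reproduces precisely $\int\langle \N F,\tfrac{x}{2}\hook Tf\rangle\,\eta^2 G_0$, which cancels the sign-indefinite term carried over from the Bochner inequality and leaves only the annular remainder $-2\int\langle \N(F\circ f),\N\eta\rangle\,\eta\, G_0\,dV$. Bounding $|\N F|$ on the image and applying Young's inequality $|Tf|\,|\N\eta|\,\eta\leq \varepsilon|Tf|^2\eta^2 + \varepsilon^{-1}|\N\eta|^2$ to absorb the leading term back into the left side yields
\[
\tfrac{c}{2}\int_{B_R}|Tf|^2\eta^2 G_0\,dV\ \leq\ C\int_{B_R\setminus B_{R/2}}|\N\eta|^2 G_0\,dV\ \leq\ C\,R^{m-2}e^{-R^2/16} .
\]
Letting $R\to\infty$ forces $\int_{\mathbb{R}^m}|Tf|^2 G_0\,dV = 0$, hence $Tf\equiv 0$ and $f$ is constant.

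The step I expect to require the most care is not the computation but its setup: a convex supporting domain only guarantees a convex $F$ on each compact subset, so one must first know the image of $f$ is precompact there, or else argue that it lands in a single sublevel set carrying a convex function; and the weighted integration by parts, together with the passage $R\to\infty$, needs enough control on $|Tf|$ — polynomial energy growth — to justify dominated convergence and the vanishing of boundary contributions. Once those points are in place, the cancellation between $\langle \N F,\tau\rangle$ and the logarithmic derivative of the Gaussian weight is exactly the mechanism that makes the soliton case work as cleanly as Gordon's harmonic one.
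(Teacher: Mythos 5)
Your proposal is correct and follows essentially the same two routes as the paper: first, observing that a soliton is an $n$-obstruction and invoking the proof of Theorem \ref{thm:5.4.3LW}, and second, the direct Gaussian-weighted integration by parts in which the drift term $\ip{\N F,\tfrac{x}{2}\hook Tf}$ cancels against the logarithmic derivative of $G_0$, leaving only an annular error absorbed by Young's inequality. The caveats you flag (compactness of the image within the convex supporting set, and growth control to justify the limit $R\to\infty$) are legitimate and are likewise left implicit in the paper.
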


Given this, a natural question to
ask is if the harmonic map flow admits a smooth long time
solution with convergence to a point given an initial mapping into a
convex supporting region.  Our next proposition shows that this is not
the case, and is the reason we are forced to make stronger convexity hypotheses
in showing Theorem \ref{ltethm}.

\begin{figure}[ht]
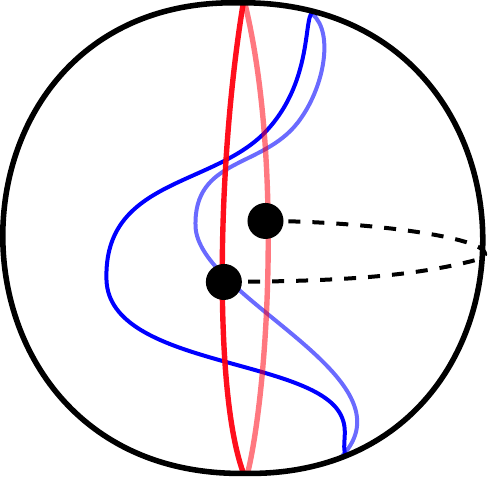
\caption{Harmonic map heat flow on sphere which exits the region $S^2\backslash
H_+^2$}
\label{fig:sphere}
\end{figure}

\begin{prop} There exists a map $f : S^1 \to S^2 \backslash H_+^2$ such that the
harmonic map heat flow with initial condition $f$ exists smoothly for all time
and converges to the great circle $\{x_3 = 0 \}$. Therefore this solution will
leave, in finite time, the domain of definition of any convex function defined
on the image of $f$.
\begin{proof} Figure \ref{fig:sphere} summarizes the situation.  Let
$x_1:=\cos\theta\sin\phi$, $x_2:=\sin\theta\sin\phi$, and $x_3:=\cos\phi$ be
spherical coordinates. Consider the hinge $H_+^2$ where $x_2=0$ and $x_3\geq 0$.
Let $\epsilon>0$ and consider the closed curve in $S^2\setminus H_+^2$ given by
\begin{align*}
f(s)=(\theta(s),\phi(s)) = \prs{s,\tfrac{\pi}{2}+\epsilon\cos(2s)}.
\end{align*}
This curve is invariant (up to unit speed re-parameterization) under the
isometry given by reflection in the $x_3=0$ plane followed by a rotation of
$\frac{\pi}{2}$ radians around the $x_3$-axis.  By uniqueness of solutions to
the flow, this symmetry is preserved along the solution to the flow beginning
from $f$.  Since the domain has dimension $1$, we know that the flow exists for
all time and converges to a closed geodesic.  As the symmetry is preserved in
the limit, the limit must be the great circle with $\{x_3 = 0 \}$.
\end{proof}
\end{prop}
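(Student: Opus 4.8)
The plan is to produce the desired example explicitly by imposing a finite symmetry that rigidly constrains the limit of the flow. Since the source $S^1$ is one-dimensional, harmonic map heat flow into the compact target $S^2$ is subcritical: it exists smoothly for all time, the energy $\mathcal{E}(f_t)$ is nonincreasing, so there is a sequence $t_k\to\infty$ with $\|\tau_{t_k}\|_{L^2(S^1)}\to 0$, and standard parabolic estimates (no bubbling can occur in dimension one) give $C^\infty$ subconvergence of $f_{t_k}$ to a harmonic map $S^1\to S^2$, i.e.\ a constant-speed parametrization of a closed geodesic --- a point or a (possibly multiply covered) great circle. Everything then reduces to showing that every such subsequential limit has image the equator $\{x_3=0\}$; the Hausdorff convergence $f_t(S^1)\to\{x_3=0\}$ and the stated consequence will follow since $\{x_3=0\}$ meets $H_+^2$.

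For the symmetry, let $\sigma$ be the isometry of $S^2\subset\mathbb{R}^3$ given by reflection in the plane $\{x_3=0\}$ followed by the rotation of angle $\tfrac{\pi}{2}$ about the $x_3$-axis, so that $\sigma(x_1,x_2,x_3)=(-x_2,x_1,-x_3)$; this has order four and, as a one-line computation shows, no fixed point on $S^2$. Parametrize $S^1=\mathbb{R}/2\pi\mathbb{Z}$ by $s$, write $S^2$ in the spherical coordinates $x_1=\cos\theta\sin\phi$, $x_2=\sin\theta\sin\phi$, $x_3=\cos\phi$, and for small $\epsilon>0$ take the smooth closed curve $f(s)=(\theta(s),\phi(s))=(s,\ \tfrac{\pi}{2}+\epsilon\cos 2s)$. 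A direct check gives $\sigma(f(s))=f(s+\tfrac{\pi}{2})$, i.e.\ $f$ is equivariant for the $\mathbb{Z}/4$-action generated by $\sigma$ on the target and by translation by $\tfrac{\pi}{2}$ on $S^1$. Because this action is by isometries of source and target, the map $s\mapsto\sigma^{-1}(f_t(s+\tfrac{\pi}{2}))$ solves the flow with the same initial data as $f_t$, so by uniqueness $\sigma(f_t(s))=f_t(s+\tfrac{\pi}{2})$ for all $t$; passing to a $C^0$ limit, any subsequential limit $f_\infty$ satisfies $\sigma(f_\infty(s))=f_\infty(s+\tfrac{\pi}{2})$. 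Since $\sigma$ is fixed-point free, $f_\infty$ cannot be constant, hence is a great circle; and a great circle with unit normal $\nu$ is $\sigma$-invariant precisely when $\sigma(\nu)=\pm\nu$, which by solving $(-\nu_2,\nu_1,-\nu_3)=\pm(\nu_1,\nu_2,\nu_3)$ forces $\nu=(0,0,\pm 1)$. Thus $f_\infty$ has image $\{x_3=0\}$, as claimed.

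It remains to record the elementary facts surrounding the argument. First, $f(S^1)\subset S^2\setminus H_+^2$ with $H_+^2=\{x_2=0,\ x_3\geq 0\}$: along the curve $x_2=\sin s\cos(\epsilon\cos 2s)$, which for $\epsilon<\tfrac{\pi}{2}$ vanishes only at $s=0,\pi$, where $x_3=\cos(\tfrac{\pi}{2}+\epsilon)=-\sin\epsilon<0$, so the image misses $H_+^2$. Second, $\{x_3=0\}$ contains points of $H_+^2$, e.g.\ $(\pm 1,0,0)$, hence is not contained in $S^2\setminus H_+^2$. Combining this with the Hausdorff convergence, the flow exits every compact subset of $S^2\setminus H_+^2$ in finite time; in particular it leaves, in finite time, the domain of definition of any strictly convex function defined on a neighborhood of the image of $f$, exactly as in the maximality argument of Proposition \ref{prop:}: no such neighborhood can contain a closed geodesic, since for a closed geodesic $\gamma$ one would have $\tfrac{d^2}{dt^2}(F\circ\gamma)=\Hess F(\gamma',\gamma')>0$ while $F\circ\gamma$ is periodic.

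The main obstacle is the passage to the limit of the flow: one must guarantee simultaneously that the flow does not collapse to a point and that it lands on the correct great circle. Both are handled by rigidity rather than by analysis --- collapse is ruled out by the fixed-point-freeness of $\sigma$, and the identification of the limit by the one-line classification of $\sigma$-invariant great circles above --- so beyond the classical one-dimensional long-time existence and subconvergence statements, together with the equivariance bookkeeping, no further estimates are needed. In particular full convergence of $f_t$ is not required: geometric subconvergence of the images suffices for the stated conclusion.
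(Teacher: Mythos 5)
Your proof is correct and follows essentially the same route as the paper: the same test curve $(s,\tfrac{\pi}{2}+\epsilon\cos 2s)$, the same order-four isometry $\sigma(x_1,x_2,x_3)=(-x_2,x_1,-x_3)$ whose equivariance is propagated by uniqueness of the flow, long-time existence and subconvergence to a closed geodesic in the one-dimensional setting, and identification of the limit as the equator $\{x_3=0\}$ via the preserved symmetry. You in fact supply details the paper leaves implicit --- the verification that the curve misses $H_+^2$, the use of fixed-point-freeness of $\sigma$ to rule out collapse to a point, and the explicit classification of $\sigma$-invariant great circles.
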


\begin{rmk} It seems quite likely that by allowing initial data which ``bends''
further around the set $H_+^2$ that the flow can exit the entire convex
supporting domain in finite time, instead of infinite time. 
\end{rmk}

\subsection{Proof of Theorem \ref{ltethm}}\label{ss:extESthm}

In this subsection we prove Theorem \ref{ltethm}, which relies strongly on
results from \cite{LW}.  To begin we provide a sketch of the discussion in \S
5.4 of (\cite{LW}, pp.118-123) which discusses a generalization of the classic
theorem of Eells-Sampson \cite{ES}, where the nonpositivity assumption on the
sectional curvature of $N$ is replaced with the weaker assumption that the
universal cover of the target $N$ admits a strictly convex function which grows
quadratically.  We include sketches of some proofs for convenience as we will
reference and exploit some of the arguments as well as the results themselves. 
First we give a preliminary definition, which will ultimately be key in tying in
this work with that of the discussion of solitons.
\begin{defn}\label{defn:nobstr} A smooth map $v_t \in
C^{\infty}\prs{\bRn \times \mathbb{R}_{\leq 0}, N}$ is an \emph{$n$-obstruction}
for $(N,h)$ if it satisfies the following
\begin{enumerate}
\item[(nO1)] $v_t$ satisfies \eqref{eq:HMHF},
\item[(nO2)] $\brs{ \prs{T v_0} (0) } > 0$, and for all $(x,t) \in \bRm \times
\mathbb{R}_{\leq 0}$.
\begin{equation*}
\brs{ \prs{T v_t}(x)} \leq \brs{ \prs{T v_0}(0)},
\end{equation*}
\item[(nO3)] There exists some $E_0 \in \mathbb{R}_{>0}$ such that for all $R >
0$ and $t \in \mathbb{R}_{\leq 0}$,
\begin{equation}\label{eq:nobstr}
R^{2-m} \int_{B_R} \brs{ T v_t }^2 \leq E_0.
\end{equation}
\end{enumerate}
\end{defn}

The next result shows that $n$-obstructions are the obstruction to smooth
existence of the
harmonic map heat flow.

\begin{prop}[\cite{LW} Proposition 5.4.2]\label{prop:5.4.2LW} Assume $(N,h)$
does not admit an $n$-obstruction. Given initial data \eqref{eq:HMHF} has a
unique smooth solution $f_t \in C^{\infty}(M \times \mathbb{R}_{\geq 0},N)$.
Moreover, there exists $C \in \mathbb{R}_{\geq 0}$ such that for all $t \geq 1$,
\begin{equation}\label{eq:5.4.2LW}
\brs{\brs{T f_t}}_{C^0(M)} \leq C.
\end{equation}
\begin{proof}[Sketch of proof]
The proof consists of constructing a blowup limit around a potential singularity
and showing it is an $n$-obstruction.  If the singularity occurs for time $T <
\infty$, one and constructs an $n$-obstruction from the blowup function obtained
by dilating and shifting coordinates, with some sequence of scalars $\kappa_i
\searrow 0$
\begin{equation*}
f_t^i(x) := f_{t_0 + \kappa_i^2 t}(x_0 + \kappa_i x).
\end{equation*}
This immediately satisfies (nO1), and higher order regularity of parabolic
equations yields (nO2). Property (nO3) is verified by comparing the left side of
\eqref{eq:nobstr} to a scaled version of $\mathcal{F}_{x_0,t_0}\prs{f_t}$ and
then using the monotonicity given by Lemma \ref{thm:Struwemonotonicity}.  In the
case of a singularity at time infinity, we can choose a
sequence of times $t_j \nearrow \infty$ and repeat the argument above.
\end{proof}
\end{prop}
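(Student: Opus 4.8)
The plan is to carry out the blowup-and-contradiction argument indicated in the sketch, supplying the compactness and monotonicity steps. First I would record that classical Eells--Sampson theory \cite{ES} gives a unique smooth solution $f_t : M \times [0,T) \to N$ of \eqref{eq:HMHF} with the prescribed initial data on a maximal interval, $T \in (0,\infty]$; uniqueness is the usual energy estimate for the quasilinear parabolic system, and the total energy $\mathcal{E}(f_t)$ is non-increasing along the flow. It then suffices to exclude two scenarios: (i) $T < \infty$; and (ii) $T = \infty$ but $\sup_M \brs{Tf_{t_j}} \to \infty$ for some $t_j \nearrow \infty$. In each case I would manufacture an $n$-obstruction (Definition \ref{defn:nobstr}), contradicting the hypothesis on $(N,h)$.

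For case (i): maximality of $T$ together with parabolic regularity forces $\sup_{M \times [0,t]} \brs{Tf} \to \infty$ as $t \nearrow T$. I would then choose, for each large $i$, a point $(x_i,t_i)$ at which $Q_i := \sup_{M \times [0,t_i]} \brs{Tf}$ is attained, arranging (by relabeling) $Q_i \to \infty$ and $t_i \nearrow T$, and set $\kappa_i := Q_i^{-1} \searrow 0$. Composing with an isometric embedding $N \hookrightarrow \bRn$ and defining $f^i_t(x) := f_{t_i + \kappa_i^2 t}(x_i + \kappa_i x)$, one has $\brs{Tf^i_t(x)} \leq 1$ for all $x$ and all $t \leq 0$, with equality at $(0,0)$. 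Since the energy density of $f^i_t$ is bounded, interior parabolic estimates for \eqref{eq:HMHF} and Arzel\`a--Ascoli yield a $C^\infty_{loc}$ subsequential limit $v_t : \bRm \times \mathbb{R}_{\leq 0} \to N$ solving \eqref{eq:HMHF} (the rescaled time intervals $[-t_i \kappa_i^{-2},0]$ exhaust $\mathbb{R}_{\leq 0}$), which gives (nO1); and $\brs{(Tv_0)(0)} = 1$ together with $\brs{Tv_t} \leq 1$ gives (nO2).

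The remaining point is the uniform energy bound (nO3). Here I would use that the scale-invariant quantity $R^{2-m}\int_{B_R}\brs{Tv^i_t}^2$ equals $(\kappa_i R)^{2-m}\int_{B_{\kappa_i R}(x_i)}\brs{Tf_{t_i+\kappa_i^2 t}}^2$, which is controlled, up to a dimensional constant, by the localized functional $\mathcal{F}$ for $f$ based at $x_i$ at scale comparable to $(\kappa_i R)^2$ and time $t_i + \kappa_i^2 t$, since on a ball the Gaussian weight is bounded below. Struwe's local monotonicity formula (Lemma \ref{thm:Struwemonotonicity}) bounds this by the same functional for $f_0$ at a larger scale, and since $f_0$ is a fixed smooth map on a compact manifold the supremum of $\mathcal{F}_{x_0,\sigma}(f_0)$ over all basepoints $x_0$ and scales $\sigma > 0$ is a finite constant $E_0$ (it tends to $0$ as $\sigma \to 0$ and as $\sigma \to \infty$ when $m \geq 3$, and is bounded by $\mathcal{E}(f_0)$ when $m=2$). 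This $E_0$ is independent of $i$, $R$ and $t$ and passes to the limit, yielding (nO3). Hence $v_t$ is an $n$-obstruction, a contradiction, so $T = \infty$.

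Finally, for case (ii) and the uniform bound \eqref{eq:5.4.2LW}: if no such $C$ exists, pick $t_j \nearrow \infty$ with $\sup_M \brs{Tf_{t_j}} \to \infty$, let $(x_j,s_j)$ with $s_j \leq t_j$ be a point where $Q_j := \sup_{M \times [0,t_j]}\brs{Tf}$ is attained; then $Q_j \to \infty$ and necessarily $s_j \to \infty$, so the identical blowup with basepoint $(x_j,s_j)$ and scale $\kappa_j := Q_j^{-1}$ again produces a limit on $\bRm \times \mathbb{R}_{\leq 0}$ satisfying (nO1)--(nO2), and the monotonicity argument again gives (nO3). The resulting $n$-obstruction contradicts the hypothesis, proving \eqref{eq:5.4.2LW}. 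I expect the main obstacle to be the blowup compactness step: choosing the points and scales so that the limit flow is genuinely nonconstant with globally bounded energy density, and making the comparison between the scale-invariant local energy and the localized $\mathcal{F}$ precise enough that Struwe's monotonicity delivers the bound $E_0$ uniformly in the rescaling parameter.
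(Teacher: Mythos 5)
Your proposal is correct and follows essentially the same route as the paper's sketch (itself a summary of Lin--Wang): a point-picking blowup $f^i_t(x)=f_{t_i+\kappa_i^2 t}(x_i+\kappa_i x)$ at a finite-time or infinite-time singularity, parabolic regularity for (nO1)--(nO2), and the comparison of the scale-invariant local energy with $\mathcal{F}$ plus the monotonicity formula for (nO3). The only detail worth noting is that for a compact source $M$ the monotonicity used should be the localized/compact-source version (Hamilton's, Corollary \ref{lem:BHFFmon}) rather than Lemma \ref{thm:Struwemonotonicity} verbatim, but the paper's sketch makes the same appeal.
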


\begin{thm}[Theorem 5.4.3 of \cite{LW}, pp.120]\label{thm:5.4.3LW} Let
$(\widetilde{N}, \widetilde{h})$ be the universal covering of $(N,h)$. Suppose
that $\widetilde{N}$ admits a strictly convex function $F \in
C^2\prs{ \widetilde{N}}$ with quadratic growth, i.e. there are positive
$c_0,c_1,c_2 \in \mathbb{R}_{\geq 0}$ such that
\begin{align}
&\N^2 F \geq c_0 \tilde{h} \text{ on $\widetilde{N}$},\label{eq:5.4.3LWa} \\
&0 \leq F(y) \leq c_1 (d_{\widetilde{h}}(y,y_0))^2 + c_2 \text{ for all $y \in
\widetilde{N}$},\label{eq:5.4.3LWb}
\end{align}
for some $y_0 \in \widetilde{N}$, where $d_{\widetilde{h}}$ is the distance
function on $(\widetilde{N}, \widetilde{h})$.  Then given $f : (M^m, g) \to
(N^n,
h)$ there exists a smooth
solution $f_t \in C^{\infty}(M \times \mathbb{R}_{\geq 0}, N)$ to
\eqref{eq:HMHF} satisfying
\eqref{eq:5.4.2LW}. And for suitable $t \nearrow \infty$, $f_t$ converges to a
smooth harmonic map $f_{\infty} : M \to N$ in $C^2(M,N)$.

\begin{proof}[Sketch of proof] It amounts to appealing to Proposition
\ref{prop:5.4.2LW} by demonstrating that $(N,h)$ does not admit any
$n$-obstructions. Since the fundamental group $\pi_1(N)$ acts isometrically on
$\widetilde{N}$ by deck transformation, so given any $F$ satisfying
\eqref{eq:5.4.3LWa} and \eqref{eq:5.4.3LWb}, we have that for all $\ga \in
\pi_1(N)$ the function $F_{\ga} := (F \circ \ga)$ also satisfies
\eqref{eq:5.4.3LWa} and \eqref{eq:5.4.3LWb} (with $y_0$ replaced by
$\ga^{-1}(y_0)$).  If $N$ supports an $n$-obstruction $\nu_t$, then its lift
$\tilde{\nu}_t$ is also an $n$-obstruction on $(\widetilde{N}, \widetilde{h})$.
For any $\ga \in \pi_1(M)$ we consider and application of the heat operator to
$\gw_t = F_{\ga} \circ \tilde{\nu}_t$ and obtain, using \eqref{eq:5.4.3LWa},
\begin{equation*}
(\del_t - \lap) \gw_t = - c_0 \brs{ T \nu_t }^2.
\end{equation*}
Using the representation formula for the heat equation applied to $\gw$ yields,
for $t_0 < 0$,
\begin{align*}
(4 \pi)^{m/2} \gw_0(0)
& \leq \brs{t_0}^{-m/2} \int_{\mathbb{R}^m} \gw_{t_0}(x) e^{\frac{- \brs{x}^2}{4
\brs{t_0}}} dV - \int_{t_0}^0 \int_{\bRm} \brs{T v_t}^2 \brs{t}^{-m/2}
e^{-\tfrac{\prs{x}^2}{4 \brs{t}}}dV dt.
\end{align*}
Since $w = F_{\ga} (\tilde{v}) \geq 0$ by \eqref{eq:5.4.3LWb} then we have that
\begin{equation}\label{eq:intvleqintw}
\int_{t_0}^0 \int_{\mathbb{R}^m} \brs{T \nu_t}^2 \brs{t}^{-m/2} e^{-
\tfrac{\brs{x}^2}{4 \brs{t}}} dV dt \leq c_0^{-1} \brs{t_0}^{-m/2}
\int_{\mathbb{R}^m} \gw_{t_0}(x) e^{- \tfrac{\brs{x}^2}{4 \brs{t_0}}} dV dt.
\end{equation}
There is a choice of $t_0 \in [-2,-1]$ and $\ga \in \pi_1(N)$, verified in
\cite{LW} pp.121-122 (we refer the reader to this verification) so that
\begin{equation}\label{eq:intwc3}
\brs{t_0}^{-m/2} \int_{\bRm} \gw_{t_0}(x) e^{-\tfrac{\brs{x}^2}{2 \brs{t_0}}}
dV <c_3,
\end{equation}
where $c_3 := c_3\prs{ c_0,c_1,c_2, E_0 } \in \mathbb{R}_{>0}$.  If we take the
combination of \eqref{eq:intvleqintw} and \eqref{eq:intwc3} and then apply a
`layer cake' decomposition of the time domain we obtain 
\begin{align*}
\int_{t_0}^0 \int_{\bRm} \brs{T \nu_t}^2 \brs{t}^{-m/2} e^{-\frac{\brs{x}^2}{4
\brs{t}}}dV dt & < \sum_{k= 1}^{\infty} \int_{-4^{-k}}^{-4^{-(k+1)}}
\int_{\mathbb{R}^m} \brs{T \nu_t}^2 G_{0} dV dt\\
& \leq c_0^{-1} c_3 \\
&= c_4 < + \infty.
\end{align*}
This implies that for a given $\delta>0$ there is some index $k_{\delta}$
within the summation over which 
\begin{equation*}
I_{k_{\delta}} := \int_{4^{-k_{\delta}}}^{4^{-(k_{\delta}+1)}}
\int_{\mathbb{R}^m} \brs{T \nu_t}^2 G_0 dV dt \leq \delta^2.
\end{equation*}
We may choose $\delta \in \mathbb{R}_{>0}$ sufficiently small coming from
Proposition 7.1.4 in \cite{LW} which yields that one has $\brs{T \nu_0} (0)
\leq c_5$ for some $c_5 = c_5(\delta, n, \tilde{N}, E_0)$.

Conversely, for every $\kappa \in \mathbb{R}_{>0}$ we can construct another
$n$-obstruction map via rescaling with $\nu^{\kappa}_t(x) = \nu_{\kappa^2
t}(\kappa x)$.
We can apply, for each one of these $n$-obstruction tensors, the same argument.
Furthermore if we take a limit over $\kappa \to \infty$ their limit will also be
an
$n$-obstruction tensor. But then we have
\begin{equation*}
\lim_{\kappa \to \infty} \brs{T \nu^{\kappa}_0}(0) = \lim_{\kappa \to \infty}
\kappa
\brs{T \nu_0}(0) = \infty.
\end{equation*}
So the limit cannot be an $n$-obstruction because (nO2) of the definition is
violated. We conclude an $n$-obstruction tensor cannot exist on $N$, and the
result follows.
\end{proof}
\end{thm}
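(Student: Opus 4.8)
The plan is to reduce the whole statement to the claim that $(N,h)$ admits no $n$-obstruction in the sense of Definition \ref{defn:nobstr}. Granted this, Proposition \ref{prop:5.4.2LW} immediately produces the smooth solution $f_t \in C^{\infty}(M \times \mathbb{R}_{\geq 0}, N)$ to \eqref{eq:HMHF} together with the bound \eqref{eq:5.4.2LW}, and the convergence assertion follows from the classical Eells--Sampson argument, which I sketch at the end. So suppose for contradiction that $\nu_t$ is an $n$-obstruction for $(N,h)$. Since the covering projection $\widetilde{N} \to N$ is a local isometry and conditions (nO1)--(nO3) are either local in nature or involve only the scale-invariant renormalized energy, the lift $\tilde{\nu}_t$ to $(\widetilde{N}, \widetilde{h})$ is again an $n$-obstruction with the same constant $E_0$. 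The fundamental group $\pi_1(N)$ acts on $\widetilde{N}$ by isometric deck transformations, so for each $\gamma \in \pi_1(N)$ the translated function $F_\gamma := F \circ \gamma$ still satisfies \eqref{eq:5.4.3LWa} and \eqref{eq:5.4.3LWb}, now with base point $\gamma^{-1}(y_0)$; the freedom to choose $\gamma$ is what will let us position $\tilde{\nu}_t$ where $F$ is controlled.

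The computational heart of the argument is that $F_\gamma$ is a heat subsolution along the lifted flow. Since $\tilde{\nu}_t$ solves \eqref{eq:HMHF}, the terms $\ip{ \N F_\gamma, \tau }$ appearing in $\del_t(F_\gamma \circ \tilde{\nu}_t)$ and in $\lap(F_\gamma \circ \tilde{\nu}_t)$ cancel, so setting $\gw_t := F_\gamma \circ \tilde{\nu}_t \geq 0$ and using \eqref{eq:5.4.3LWa} gives
\begin{align*}
(\del_t - \lap) \gw_t = -\tr_g \brk{ \N^2 F_\gamma \prs{ T \tilde{\nu}_t, T\tilde{\nu}_t } } \leq - c_0 \brs{ T \nu_t }^2 .
\end{align*}
Applying the representation formula for the heat equation on $\bRm \times [t_0, 0]$ with $t_0 < 0$, weighting against the backward heat kernel $G_0$ based at $(0,0)$, and discarding the nonnegative contribution of $\gw_0$, yields
\begin{align*}
\int_{t_0}^0 \int_{\bRm} \brs{ T \nu_t }^2 G_0 \, dV \, dt \leq c_0^{-1} \brs{t_0}^{-m/2} \int_{\bRm} \gw_{t_0}(x) \, e^{- \brs{x}^2 / (4 \brs{t_0})} \, dV .
\end{align*}

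The main obstacle is to bound the right-hand side by a constant depending only on $c_0, c_1, c_2$ and $E_0$. By the quadratic growth \eqref{eq:5.4.3LWb} this amounts to controlling a Gaussian-weighted integral of $d_{\widetilde{h}}(\tilde{\nu}_{t_0}(x), \gamma^{-1}(y_0))^2$, and here one must choose $t_0 \in [-2,-1]$ together with a deck transformation $\gamma$ so that $\gamma^{-1}(y_0)$ lies close to $\tilde{\nu}_{t_0}(0)$, while the renormalized energy estimate (nO3) of \eqref{eq:nobstr} controls how fast $\tilde{\nu}_{t_0}(x)$ can move away from $\gamma^{-1}(y_0)$ as $\brs{x}$ grows. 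This is precisely the balancing carried out in \cite{LW} pp.121--122; it gives $\brs{t_0}^{-m/2} \int_{\bRm} \gw_{t_0} \, e^{-\brs{x}^2/(4\brs{t_0})} \, dV < c_3(c_0,c_1,c_2,E_0)$, and hence
\begin{align*}
\int_{t_0}^0 \int_{\bRm} \brs{ T \nu_t }^2 G_0 \, dV \, dt < c_0^{-1} c_3 =: c_4 < \infty .
\end{align*}

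Given this finite bound, a dyadic (``layer cake'') decomposition of the time interval shows that for every $\gd > 0$ there is a scale $k_{\gd}$ with $I_{k_{\gd}} := \int_{-4^{-k_{\gd}}}^{-4^{-(k_{\gd}+1)}} \int_{\bRm} \brs{T\nu_t}^2 G_0 \, dV \, dt \leq \gd^2$; choosing $\gd$ below the threshold of the $\ge$-regularity statement (Proposition 7.1.4 of \cite{LW}) and applying it on this shell produces a uniform pointwise bound $\brs{T\nu_0}(0) \leq c_5(\gd, n, \widetilde{N}, E_0)$. To close the contradiction I would use parabolic rescaling: for each $\kappa > 0$ the map $\nu^\kappa_t(x) := \nu_{\kappa^2 t}(\kappa x)$ is again an $n$-obstruction carrying the \emph{same} data $c_0, c_1, c_2, E_0$ (the renormalized energy in \eqref{eq:nobstr} is scale-invariant), so the bound above applies to every $\nu^\kappa$ with a $\kappa$-independent constant; since $\brs{T\nu^\kappa_0}(0) = \kappa \brs{T\nu_0}(0)$, letting $\kappa \to \infty$ forces $\brs{T\nu_0}(0) = 0$, contradicting (nO2). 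Hence $(N,h)$ admits no $n$-obstruction, and Proposition \ref{prop:5.4.2LW} gives the global smooth solution with \eqref{eq:5.4.2LW}. For the convergence, the energy $\mathcal{E}(f_t)$ is nonnegative and nonincreasing along \eqref{eq:HMHF}, with $\tfrac{d}{dt}\mathcal{E}(f_t) = -\int_M \brs{\tau_t}^2 dV_g$, so $\int_0^\infty \int_M \brs{\tau_t}^2 dV_g \, dt < \infty$ and we may pick $t_j \nearrow \infty$ with $\int_M \brs{\tau_{t_j}}^2 dV_g \to 0$; the uniform bound \eqref{eq:5.4.2LW} on $\brs{Tf_t}$ bootstraps through interior parabolic Schauder estimates to uniform $C^{2,\ga}$ bounds on the $f_{t_j}$, so a subsequence converges in $C^2(M,N)$ to a map $f_\infty$ with $\tau(f_\infty) = 0$, a smooth harmonic map.
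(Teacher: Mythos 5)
Your proposal is correct and follows essentially the same route as the paper's sketch: reduce to non-existence of $n$-obstructions, lift to the universal cover, use $F_\gamma$ as a heat subsolution against the backward heat kernel, defer the choice of $t_0 \in [-2,-1]$ and the deck transformation to \cite{LW} pp.121--122, extract a small dyadic time shell, apply the $\ge$-regularity of Proposition 7.1.4 of \cite{LW}, and conclude by parabolic rescaling. Your phrasing of the final contradiction (the scale-invariant bound $c_5$ applied to every $\nu^\kappa$ forces $\brs{T\nu_0}(0)=0$, violating (nO2)) is in fact a slightly cleaner rendering of the same step, and your added sketch of the convergence via energy decay and Schauder estimates fills in a point the paper leaves implicit.
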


With these preliminaries in place we can establish Theorem \ref{ltethm}.

\begin{proof}[Proof of Theorem \ref{ltethm}] We consider the
solution $f_t$ to harmonic map flow with initial condition $f : M \to
\overset{\circ}{N}$.  Let $\bar{N}$ denote the doubling of $N$,
i.e. $\bar{N} = N \sqcup_{\del N} N$.  Using cutoff functions in a small
neighborhood of the boundary we can double the metric $h$ as well.  We can
interpret the solution to the harmonic map flow with initial condition $f$ as a
flow into $\bar{N}$, and then we know by Proposition \ref{prop:5.4.2LW} that if
the flow encounters a singularity then there exists an $n$-obstruction mapping
into $\bar{N}$.

We further claim that for all $t \geq 0$,
$f_t(M) \subset \overset{\circ}{N}$ by a strong maximum principle argument.  Let
$F$ be the strictly convex
boundary-defining function on $(N, \del N, h)$, where $N = F^{-1}(-\infty,c]$
and $\del N = F^{-1}(c)$.  Note that there exists some $\ge > 0$ such that $F
\circ f_0 \leq c - \ge$.  Moreover, since $N$ is compact, $F^{-1}(-\infty,c -
\ge]$ is also compact, and hence the only way the flow can leave this set is by
increasing $F$ beyond the threshold of $c - \ge$.

So, let $\Phi_t = F
\circ f_t$.  A direct computation shows that 
\begin{align*}
\dt \brk{\Phi_t} =&\ \gD \Phi_t- \N^2 F(Tf_\alpha, Tf_\alpha)\leq \gD \Phi_t.
\end{align*}
It follows that $\sup_M \Phi_t \leq \sup_M \Phi_0$ for all $t \geq 0$, and hence
$f_t \subset F^{-1}(-\infty,c - \ge] \subset \overset{\circ}{N}$.  Thus, the
construction of Proposition \ref{prop:5.4.2LW} in fact generates an
$n$-obstruction into $\overset{\circ}{N}$, which admits a strictly
convex function.  This is a contradiction as outlined in the proof of Theorem
\ref{thm:5.4.3LW}.  Hence the solution exists for all time with a uniform
estimate on the energy density.  This implies subconvergence to a limiting
harmonic map, which is necessarily a constant map by (\cite{Gordon}).
\end{proof}

\subsection{Conjectural framework for stable singularities into spheres}
\label{ss:conjs}

Let us now discuss the strategy of dimensional augmentation and how it relates
to singularity formation for the harmonic map heat flow.  Consider a harmonic
map heat flow $f_t : M \to S^n$ which encounters a singularity at $T < \infty$. 
We can use a hemispherical embedding $S^n \subset S^{n+1}$ and think of the
solution as a family $\til{f}_t : M \to S^{n+1}$.

We have already exhibited
several ways in which the singularity encountered by $\til{f}$ is unstable.  In
particular 
Corollary \ref{prop:solitonintospheres} shows that any soliton singularity model
for $\til{f}$ will be unstable or constant.  Alternatively, one can use Theorem
\ref{ltethm} to exhibit the instability.  In particular, for any time $t < T$,
one can perturb the map $\til{f}_t$ into the upper $x_{n+2}$ hemisphere via
\begin{align*}
 \bar{f}_{\ge} = \left( \sqrt{1 - \ge^2} f^1, \dots, \sqrt{1-\ge^2} f^{n+1}, \ge
\right),
\end{align*}
where we have used the explicit components of the given map $f$.  The image lies
entirely in a sublevel set of the strictly convex function $x_{n+2}$, and hence
Theorem \ref{ltethm} guarantees the long time existence and convergence of the
flow with initial condition $\til{f}_{\ge}$.

This gives an alternative way of moving past even a stable singularity of the
flow mapping into spheres.  In particular, one could choose a sequence of times
$t_i \to T$ and a sequence $\ge_i \to 0$, and then consider the long time
solutions to the flow with initial condition $\bar{f}_{\ge_i}(t_i)$, and
consider
the limit as $i$ goes to infinity. A similar strategy was employed by Grayson
and Altschuler \cite{GA} for the curve shortening flow, following a suggestion
of Calabi as follows. Consider a solution $\gamma_t(s)$ to the curve shortening
flow in the plane which has a singularity at $t=T<\infty$. Consider so called
ramps over $\gamma$, that is embedded curves in $\mathbb{R}^3$ with
$\frac{d\gamma_3}{ds}>0$ and which project to parameterizations of $\gamma$. One
then takes a sequence of ramps which approximate $\gamma$ and for which the
curve shortening flow for this sequence exists on a uniformly longer time
interval, and then takes the approximation parameter to zero. Once the right
estimates are obtained, the limiting flow exists for all time and agrees with
the evolution of $\gamma$ away from singularities. Constructing a ``flow past
singularities'' for the harmonic map heat flow may be approachable using similar
methods.

\bibliographystyle{hamsplain}

\end{document}